\numberwithin{equation}{section}
\long\def\combarak#1{\ifdraft{\sn #1 }\else\ignorespaces\fi}
\newif\ifdraft\drafttrue
\newif\ifdraft\draftfalse
\font\sn = cmssi8 scaled \magstep0
\newcommand{\ebf}{\mathbf{e}}
\newcommand\name[1]{\label{#1}{\ifdraft{\sn [#1]}\else\ignorespaces\fi}}
\newcommand\eq[2]{{\ifdraft{\ \tt [#1]}\else\ignorespaces\fi}\begin{equation}\label{eq:#1}{#2}\end{equation}}
\newcommand {\equ}[1]     {\eqref{eq:#1}}
\newcommand{\EE}{{\mathcal{E}}}
\newcommand{\Hyp}{{\mathbb {H}}}
\newcommand{\R}{{\mathbb{R}}}
\newcommand{\Z}{{\mathbb{Z}}}
\newcommand{\C}{{\mathbb{C}}}
\newcommand{\N}{{\mathbb{N}}}
\newcommand{\PSL}{\operatorname{PSL}}
\newcommand {\ignore}[1]
\newcommand{\sm}{\smallsetminus}
\newcommand{\vre}{\varepsilon}
\newtheorem{theorem}{Theorem}
\newtheorem{proposition}[theorem]{Proposition}
\newtheorem{lemma}[theorem]{Lemma}
\theoremstyle{definition}
\newtheorem{corollary}[theorem]{Corollary}
\theoremstyle{definition}\newtheorem{definition}[theorem]{Definition}
\theoremstyle{definition}
\theoremstyle{definition}
\theoremstyle{definition}\newtheorem{remark}[theorem]{Remark}
\theoremstyle{definition}
\numberwithin{theorem}{section}
\newcommand{\vol}{\mathrm{Vol}}
\newcommand{\eps}{\varepsilon}
\newcommand{\cC}{\mathcal{C}}
\newcommand{\cE}{\mathcal{E}}
\newcommand{\cO}{\mathcal{O}}
\newcommand{\bR}{\mathbb{R}}
\newcommand{\bZ}{\mathbb{Z}}
\newcommand{\bH}{\mathbb{H}}
\newcommand{\SL}{\operatorname{SL}}
\newcommand{\SO}[1][d]{\operatorname{SO}_{#1}}
\newcommand\norm[1]{\left\|#1\right\|} % Norm \norm
\newcommand\set[1]{\left\{#1\right\}} % Set \set
\newcommand\on[1]{\operatorname{#1}} % Operatorname \on
\newcommand\diag[1]{\operatorname{diag}\left(#1\right)}
\newcommand{\re}{\mathrm{Re}}
\newcommand{\im}{\mathrm{Im}}
\newcommand{\ra}{\rightarrow}
\newcommand{\onto}{\xymatrix{\ar@{>>}[r]&}}
\newcommand{\da}[4]{\xymatrix{#1 \ar@<.5ex>[r]^{#2} \ar@<-.5ex>[r]_{#3} & #4}}
\font\sn = cmssi8 scaled \magstep0
\newcounter{constk}
\newcounter{consta}[section]
\newcounter{constc}[section]
\newcounter{constt}[section]
\title [Effective counting for lattice orbits]{Effective counting for discrete lattice orbits in the plane via
Eisenstein series}
\author{Claire Burrin}
\address{put address here}
\email{put email here}
\author{Amos Nevo}
\address{Department of Mathematics, Technion } 
\email{put email here}
\author{Rene R\"uhr}
\address{Department of Mathematics, Tel Aviv University, Tel Aviv, Israel } 
\email{put email here}
\author{Barak Weiss}
\address{Department of Mathematics, Tel Aviv University, Tel Aviv, Israel} 
\email{barakw@post.tau.ac.il}
\date{\today}
\begin{document}

\begin{abstract}
In 1989 Veech %\cite{Veech Eisenstein} 
showed that for the flat surface
formed by gluing opposite sides of two regular $n$-gons, the set $Y
\subset \R^2$ of saddle
connection holonomy vectors satisfies a quadratic growth estimate $|\{y
\in Y: \|y\|\leq 
R\}| \sim c_YR^2 $, and computed the constant $c_Y$. 
In %the 
1992 %paper \cite{Veech regular}, 
he recorded an
observation of Sarnak that gives an error estimate 
 $|\{y
\in Y: \|y\|\leq 
R\}| = c_YR^2 + O\left(R^{\frac{4}{3}}\right)$ in the asymptotics. Both
Veech's proof of quadratic 
growth, and Sarnak's error estimate, rely on the theory of Eisenstein
series, and are valid in the wider context of counting points in discrete orbits
for the linear action of a lattice in $\SL_2(\R)$ on the
plane. In this paper we expose this technique and use it to obtain 
the following results. 
%We prove that f
For lattices $\Gamma$ with trivial residual spectrum, we recover the
error estimate 
%$$|\{y \in Y: \|y\|\leq  R \}|=
%c_{Y}  R^2 + O_{\vre}\left(R^{\frac{4}{3}+\vre}\right), $$
%which almost recovers the previous bound 
$O\left(R^{\frac43}
\right)$, with a simpler proof. Extending this argument to more
general shapes, and using twisted Eisenstein series, for 
sectors $\mathcal{S}_{\alpha,\beta} = \{r e^{\mathbf{i}\theta} :
r>0, \alpha \leq \theta \leq  \alpha+\beta\}$ we prove an error estimate 
$$|\{y \in Y: y \in \mathcal{S}_{\alpha, \beta}, \|y\| \leq R\}| =
c_Y\frac{\beta}{2\pi} \, R^2 + O_{ \vre}\left (R^{\frac{8}{5}%+\vre
}
\right).$$ 
For dilations of smooth star bodies $R\cdot B_\psi =\{r 
e^{\mathbf{i} \theta}: 0 \leq r \leq R \psi(\theta)\}$, where $R>0$
and $\psi$ is
smooth, we prove an estimate  
$$|\{y \in Y: y \in R \cdot B_{\psi}\}|=
c_{Y, \psi} R^2 + O_{\psi, \vre}\left(R^{\frac{12}{7}%+\vre
}\right). $$
\end{abstract}

\maketitle

%\setcounter{tocdepth}{2}

%\tableofcontents

\begin{center}
   \emph{Dedicated with admiration to the memory of Bill Veech}
\end{center}

\section{Introduction}
We recall the Gauss circle problem, which aims to provide an estimate
for the cardinality $\left|B \cap \Z^2 \right|$ of the intersection
of a large ball $B$ in the 
plane with the integer lattice. The estimate 
$$\left| B(0,R) \cap \Z^2 \right| = \pi
R^2 + O(R)$$ 
is easy
to prove and is attributed to Gauss (here $B(x,r) \subset \R^2$ is the Euclidean
ball of radius $r$ around $x$). There have been several
improvements to the error
term and this is still the topic of intense investigation (see
\cite{Ivic} for a recent survey). A more general
problem in the same vein aims to replace the set $\Z^2$ with another
discrete set $Y$, and replace large balls $B$ with more general sets. 
For more general sets $Y$, the first step is establishing  {\em quadratic
growth}, i.e.\ showing $\left| B(0,R) \cap Y \right| = c_YR^2 + o(R^2)$ for some
$c_Y>0$, and this can already be very challenging. In cases where quadratic
growth has been established, the natural next questions are to evaluate
the {\em quadratic growth constant} $c_Y$, and to obtain
error estimates. A well-studied example is when $Y$ is the
set of primitive points in $\Z^2$, which is a discrete orbit
under the group $\SL_2(\Z)$. This paper is concerned with the case in which $Y$ is a
discrete orbit for a lattice in $G = \SL_2(\R)$ acting on the
plane. An important contribution to the study of these discrete orbits
was made by Veech in a 
celebrated 1989 paper \cite{Veech Eisenstein}, and in the subsequent
papers \cite{Veech regular, Veech Siegel}. We begin by recalling
the context of Veech's work. 

A translation surface is a compact oriented surface equipped with a
translation structure. Since the main results of this paper
will not involve translation surfaces, we omit the precise
definitions, referring the interested reader to the surveys
\cite{Vorobets, MT,  Zorich survey}. For any translation surface $M$, the collection of
{\em holonomy vectors of saddle connections} is a discrete set $Y_M$
in $\R^2$, consisting of planar holonomies of certain straightline
paths on $M$. The group $G$ acts on a moduli space of translation
surfaces, as well as on the plane by linear transformations,
satisfying an equivariance property $Y_{gM} = gY_M$. For any $M$, its
{\em stabilizer group} (or Veech   group) is 
$$
\Gamma_M = \{g \in G: gM=M\}.
$$
If $\Gamma_M$ is a {\em lattice} in $G$, i.e.\ is discrete and of finite
covolume, then $M$ is called a {\em lattice surface} (or Veech
surface). These lattices are non-uniform and thus have discrete orbits
in the plane. 
Here is a summary of the results of \cite{Veech Eisenstein} which are
relevant to this paper. 
\begin{theorem}[Veech 1989]\name{thm: Veech 89}
\begin{itemize}
\item[(a)]
The surfaces $M_n$ obtained by gluing sides in two copies of a regular
$n$-gon are lattice surfaces, and the corresponding lattice is
non-arithmetic unless $n \in \{3,4,6\}$. 
\item[(b)]
For lattice surfaces, $Y_M$ is a finite union of $\Gamma_M$-orbits. 
\item[(c)]
Discrete orbits of lattices in $G$
acting on the plane, satisfy quadratic growth. In particular, the sets
$Y_M$ satisfy quadratic growth when $M$ is a lattice surface. 
\item[(d)] The quadratic growth constants for the surfaces $M_n$ in
  (a) are computed. 
\end{itemize}
\end{theorem}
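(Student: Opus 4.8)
The plan is to prove the four parts by largely separate arguments: (a) and (b) via the flat geometry of $M_n$ together with the Fuchsian group $\Gamma_M$, and (c) and (d) via the spectral theory of Eisenstein series on $\Gamma\backslash\Hyp$, which is the technique this paper develops. For (a), the first step is to exhibit two affine automorphisms of $M_n$ whose derivatives generate a lattice in $\SL_2(\R)$. The two-$n$-gon picture carries an evident rotational symmetry, whose derivative is an elliptic element $\rho$ of order $n$ (up to sign); and choosing a periodic direction, say the direction of a pair of parallel edges, one checks that $M_n$ decomposes into finitely many cylinders with pairwise commensurable moduli, so a suitable product of powers of the individual Dehn twists is affine with parabolic derivative $p$. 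The group $\langle\rho,p\rangle\le\SL_2(\R)$ is discrete, being the image of a group of affine automorphisms of a compact surface; to see that it has finite covolume I would construct an explicit fundamental domain in $\Hyp$ and identify $\Gamma_{M_n}$ with a concrete triangle group --- the $(2,n,\infty)$ group for $n$ odd and a non-cocompact $(n/2,\infty,\infty)$-type group for $n$ even --- so that $M_n$ is a lattice surface. Non-arithmeticity then follows by intersecting Takeuchi's finite list of arithmetic triangle groups with these two families, which leaves exactly $n\in\{3,4,6\}$; alternatively one argues through the invariant trace field $\Q(\cos 2\pi/n)$, which is incompatible with an arithmetic structure in $\SL_2(\R)$ for $n\notin\{3,4,6\}$.

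For (b), I would invoke the Veech dichotomy for lattice surfaces: every direction on $M$ is either uniquely ergodic or completely periodic, and in the completely periodic case the stabilizer of that direction in $\Gamma_M$ contains a parabolic, so the direction represents a cusp of $\Gamma_M$. Every holonomy vector of a saddle connection lies in a completely periodic direction. A non-uniform lattice has only finitely many cusps; in each of the finitely many corresponding directions the cylinder decomposition is finite and hence contains only finitely many saddle connections; and $\Gamma_M$ acts on $Y_M$ by the linear action, preserving directions. Therefore $Y_M$ is covered by finitely many $\Gamma_M$-orbits, one for each saddle connection in each cusp direction, i.e.\ $Y_M$ is a finite union of $\Gamma_M$-orbits.

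For (c), which is the crux, by (b) it suffices to treat a single discrete orbit $Y=\Gamma v_0$. Discreteness forces $\Gamma_{v_0}$ to be infinite cyclic, generated by a parabolic $\gamma_0$, so the direction $\mathfrak a$ of $v_0$ is a cusp of $\Gamma$. I would form the Dirichlet series $Z_Y(s)=\sum_{y\in Y}\|y\|^{-2s}=\sum_{\gamma\in\Gamma/\Gamma_{v_0}}\|\gamma v_0\|^{-2s}$ and, using the elementary identity $\|h e_1\|^{-2}=\operatorname{Im}(h^{-1}\mathbf{i})$ valid for $h\in\SL_2(\R)$ and a scaling matrix $\sigma_{\mathfrak a}$ for the cusp normalized so that $v_0=\lambda\,\sigma_{\mathfrak a}e_1$ for a suitable $\lambda>0$, identify
\[
  Z_Y(s)=c\,\lambda^{-2s}\,E_{\mathfrak a}(\mathbf{i},s),\qquad E_{\mathfrak a}(z,s)=\sum_{\delta\in\Gamma_{\mathfrak a}\backslash\Gamma}\operatorname{Im}(\sigma_{\mathfrak a}^{-1}\delta z)^s,
\]
with $c\in\{1,2\}$ recording whether $-I\in\Gamma$. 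One then imports the classical facts about $E_{\mathfrak a}$: it converges for $\operatorname{Re}(s)>1$, continues meromorphically, and is holomorphic on $\operatorname{Re}(s)\ge 1$ except for a simple pole at $s=1$ with residue $\vol(\Gamma\backslash\Hyp)^{-1}$, independent of $z$ and $\mathfrak a$. Since the coefficients of $Z_Y$, grouped by the value of $\|y\|^2$, are non-negative, a Wiener--Ikehara Tauberian theorem applies and gives
\[
  \left|\{y\in Y:\|y\|\le R\}\right|\sim\frac{c\,\lambda^{-2}}{\vol(\Gamma\backslash\Hyp)}\,R^2 ;
\]
summing over the finitely many orbits from (b) yields quadratic growth for $Y_M$ together with a closed formula for $c_Y$. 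The delicate points are the exact bookkeeping in the identity $Z_Y=c\,\lambda^{-2s}E_{\mathfrak a}$ --- reconciling $\Gamma_{v_0}$ with $\Gamma_{\mathfrak a}$ and the sign $\pm I$, and pinning down $\lambda$ from the normalization of $\sigma_{\mathfrak a}$ --- and confirming that $s=1$ is the only pole of $Z_Y$ on $\operatorname{Re}(s)\ge 1$, which holds for every non-uniform lattice in $\SL_2(\R)$ (any residual poles lie in the open strip $1/2<\operatorname{Re}(s)<1$ and do not obstruct the Tauberian step).

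For (d), I would specialize the formula $c_{Y_M}=\sum_j c\,\lambda_j^{-2}\,\vol(\Gamma_{M_n}\backslash\Hyp)^{-1}$ to $M_n$: the orbifold area is read off from the triangle-group data of (a) via Gauss--Bonnet, and the finite list of cusps, together with the shortest holonomy vector in each corresponding direction, comes from (b) and an explicit computation with the $n$-gon picture; substituting these recovers Veech's constants. I expect (c) to be the main obstacle --- specifically the precise passage from the orbit sum to the Eisenstein series and the verification of the spectral input --- while (a) and (d) are classical but computational and (b) rests on the Veech dichotomy, which I would quote.
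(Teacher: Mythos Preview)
Your proposal is correct and, for the part the paper actually argues (namely (c)), takes essentially the same route: identify the orbit sum $\sum_{y\in Y}\|y\|^{-2s}$ with an Eisenstein series at a fixed basepoint (the paper does this via Proposition~\ref{prop: simple properties} and the normalization $v=\mathfrak{s}_i\ebf_1$), invoke Selberg's meromorphic continuation with its simple pole at $s=1$ of residue $\mathrm{covol}(\Gamma)^{-1}$, and conclude quadratic growth by Wiener--Ikehara. The only cosmetic difference is that the paper phrases the Tauberian step through the Laplace-integral form \equ{eq: integration by parts} with $\psi(t)=\mathbf{N}(g,e^{t/2})$ rather than through the Dirichlet series directly; and the paper records the bookkeeping you flag as ``delicate'' (the $\pm\mathrm{Id}$ factor and the scaling $\lambda$) in Proposition~\ref{prop: simple properties} and formula \equ{eq: def c0}. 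Parts (a), (b), (d) are not proved in the paper at all---they are quoted from \cite{Veech Eisenstein, Veech regular}---and your sketches for them are faithful outlines of Veech's original arguments (triangle-group identification and trace-field/Takeuchi for (a), Veech dichotomy and finiteness of cusps for (b), Gauss--Bonnet plus explicit cusp data for (d)).
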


Veech proved statement (c) by reducing the problem to previous work in
analytic number theory. We will review this below in \S \ref{sec:
  eisenstein}. He also computed quadratic growth constants for the
examples in statement (a), and in 
\cite{Veech regular}, computed quadratic growth constants for
more examples. Veech revisited statement (c) in \cite{Veech 
  Siegel}, where he introduced a number of techniques which make it
possible to establish quadratic growth in more general situations, and
compute quadratic growth constants. Among other things he also
reproved (c) by ergodic methods, in particular using an
ergodic-theoretic tool of Eskin and
McMullen \cite{EM}. Another ergodic-theoretic proof
of (c) was given by Gutkin and 
Judge in \cite{GJ}, also using ideas of \cite{EM}. In subsequent work,
Eskin and Masur \cite{EskinMasur} improved 
on Veech \cite{Veech Siegel} and proved 
that almost every translation $M$ (with respect to the natural measures on
the moduli spaces of translation surfaces) satisfies quadratic
growth. Their arguments are also ergodic-theoretic and rely on an
ergodic theorem appearing in \cite{Nevo17}. 

In the presence of some spectral estimates, it is possible to improve
on quadratic growth by establishing {\em effective quadratic growth},
by which we mean proving an error term of the form 
\eq{eq: effective quadratic}{
\left|B(0, R) \cap Y \right| = c_YR^2 + O\left(R^{2-\delta} \right)
}
for some $\delta>0$. In \cite{NRW}, relying on spectral estimates
established in \cite{AGY, AG}, such an error bound was given for
almost every translation surface. In particular, the results of
\cite{NRW} imply effective quadratic growth for Veech
surfaces. However the constant $\delta$ appearing in \cite{NRW} is far
from optimal, and a much better error estimate for the case of lattice
surfaces has long been known to
experts. In fact, already in \cite[Remark 1.12]{Veech regular}, Veech
included the remark (which he attributed to Sarnak) that work of
Selberg and Good can be used to prove to an
estimate of the form 
\eq{eq: effective error Good}{
\left| B(0, R) \cap Y \right | = c_YR^2 + O\left( R^{\frac{4}{3}} \right), 
}
where $Y$ is the orbit of tempered lattice in $G$ (see \S \ref{sec: statements}).

An initial goal of this paper was to provide an exposition of
the method sketched in \cite[Remark 1.12]{Veech regular}, specifically
for the benefit of those who might be familiar with
ergodic-theoretic counting techniques but not with the techniques used
in analytic number theory. While studying this topic, the authors
obtained several extensions and improvements. Thus the
paper acquired an additional goal of proving these new results;
however we believe that a survey on these matters has not lost its
relevance, and we chose to write our paper on the level of a tutorial.
%We warn the reader from the outset that we are
%far from being experts on methods of analytic number theory, and
%especially Eisenstein series.  

The structure of the paper is as follows. In \S\ref{sec: statements} we define the
objects which will be the focus of our discussion and state our
results, comparing our new results with those which were obtained by
previous authors (or could be easily deduced from their work). Specifically we define the
class of {\em tempered} lattices and the larger class of {\em lattices with trivial
  residual spectrum}, which are the subgroups for which the relevant
spectral estimates are as strong as one could hope for. As we will
explain, our improvements concern counting points in more general
shapes than Euclidean balls; e.g. sectors or dilates of star-shaped
bodies. In \S \ref{sec: eisenstein} we define Eisenstein series and
collect some results about them. We also explain how Veech
obtained statement (c) of Theorem \ref{thm: Veech 89}. In \S\ref{sec: expository}
we prove the bound \equ{eq: effective error Good} (see
Theorem \ref{thm: good}) for counting in balls, and for lattices with trivial
residual spectrum. Our work bypasses difficult work of Good
\cite{Good} by taking advantage of the fact that in our particular
setting, counting can be achieved by making a contour shift of a
truncated Eisenstein series to the critical line, for a general
lattice. This strategy is classical in analytical number
theory (see e.g. \cite{Davenport}), and indeed goes back to the proof of the Prime Number Theorem,
but in our situation requires an extra averaging argument, see
Proposition \ref{prop: for  
  contour shift}. 
In \S\ref{sec: new} we use this idea to prove our
improvements. Although \S\ref{sec: new} is the one containing the 
proof of the new results, its proof uses the ideas involved in proving
earlier results, and so we do not recommend starting with \S\ref{sec:
  new}. The results of \S 
\ref{sec: expository} and \S \ref{sec: new} both rely on reducing
counting problems to fundamental estimates about Eisenstein
series, which are collected in \S \ref{sec: eisenstein}, and whose
proofs we do not explain. 

\medskip

{\bf Acknowledgements.}
The authors gratefully acknowledge the support of ISF grant 2095/15,
BSF grant 2016256, SNF grant 168823, and ERC starter grant HD-APP 754475.    

CB thanks Ze'ev Rudnick for making possible a visit to Tel Aviv University in
January 2017. The authors thank Avner Kiro, Morten Risager, Ze'ev Rudnick and Andreas
Str\"ombergsson for very helpful discussions. 

\section{Definitions and statement of results}\name{sec: statements}
In this section we set our notation, recall certain preliminary
results, and state our results. 

\subsection{Some actions and subgroups of $G = \SL_2(\R)$}
Recall that $G$ acts on the left on the upper half-plane $\Hyp =
\{z \in \C : \im(z)>0\}$ and on the
plane $\R^2$ respectively, by the
rules
$$
\left(\begin{matrix} a & b \\ c & d  \end{matrix} \right)z =
\frac{az+b}{cz+d} \ \ \text{ and }  \left(\begin{matrix} a & b \\ c & d  \end{matrix} \right) 
\left(\begin{matrix} x\\ y  \end{matrix} \right)=
\left(\begin{matrix} a x+ by \\ cx + d y \end{matrix} \right). 
$$
The $G$-action on $\Hyp$ preserves the hyperbolic metric
$ds^2 = \frac{dx^2+dy^2}{y^2}$ and hence the
hyperbolic area form $
\frac{dx dy}{y^2}$. Let 
$$K = \mathrm{SO}_2(\R) = \left\{r_\theta : \theta \in [0,
  2\pi]\right \}, \text{ where } r_\theta =  \left(\begin{matrix} \cos
    \theta & -\sin \theta \\ 
\sin \theta & \cos \theta \end{matrix} \right).$$
Let $\mathbf{i} = \sqrt{-1}$ so that $K$ is the stabilizer of
$\mathbf{i}$. Also let $\| \cdot \|$ be the Euclidean
norm on $\R^2$; it is also preserved by $K$. Let $\ebf_1 = (1,0)$ so
that the stabilizer of $\ebf_1$ is 
$$
N = \{u_s : s\in \R\},
\text{ where } u_s = \left( \begin{matrix}1 &
  s \\ 0 & 1 \end{matrix} \right).
$$

%We now recall some results about Fuchsian groups, their fundamental
%domains in $\Hyp$, and their action on
%$\R^2$ (see e.g. \cite[Chap. III]{Terras}). 
Let $\Gamma \subset
G$ be a discrete subgroup. It then acts properly discontinuously on
$\Hyp$. We call $\Gamma$ a {\em lattice} if there is a finite $G$-invariant
measure on $G/\Gamma$ or equivalently, a fundamental domain for the
$\Gamma$-action on $\Hyp$ of finite hyperbolic area. If there is a compact
fundamental domain then $\Gamma$ is called {\em cocompact} or {\em
  uniform}. If $\Gamma$ is a lattice we write $X_\Gamma = \Hyp/\Gamma$,
denote the $G$-invariant measure on $X_\Gamma$ induced by the
hyperbolic area form 
by $\mu_\Gamma$ and write $\mathrm{covol}(\Gamma) =
  \mu_\Gamma(X_\Gamma)$.

A subgroup of $\Gamma$ is called {\em maximal unipotent}
if it is conjugate (in $G$) to the group 
$$
N_0 = \left\{ u_n : n \in \Z\right \} 
$$
and is not properly contained in a subgroup conjugate to $N_0$. For a
lattice $\Gamma \subset G$, the quotient 
$\Hyp/\Gamma$ has a finite number of topological ends called {\em
  cusps}. The number of cusps is zero if and only if $\Gamma$ is
cocompact, and in the non-uniform case, there is a bijection between
cusps and conjugacy classes (in $\Gamma$) of maximal unipotent subgroups.
%We will call $\Gamma$ a {\em one-cusp group in standard form} if
%$\Hyp/\Gamma$ has one cusp and $\Gamma$ contains $N_0$ as a maximal unipotent
%subgroup. 
For a lattice $\Gamma$ and $v \in \R^2 \sm \{0\}$, the orbit
$\Gamma v$ is discrete if and only if the stabilizer of $v$ in
$\Gamma$ is a maximal unipotent group, and we refer to the conjugacy
class of the stabilizer
of $v$ as the {\em cusp corresponding to $\Gamma v$}. In particular a cocompact
lattice has no discrete orbits in its action on $\R^2$. Clearly
$\Gamma v$ is discrete if and only if $\Gamma (tv) = t\Gamma v$ is
discrete for all $t \neq 0$, and hence the number of discrete orbits,
considered up to dilation, is the same as the number of cusps. We are
interested in counting points in discrete orbits for the
$\Gamma$-action on $\R^2$.

\medskip

{\em Warning:} In the
literature, one often works with $\PSL_2(\R) = G/\{\pm \mathrm{Id}\}$,
the group of orientation-preserving isometries of $\Hyp$.
Since we are interested in point sets in the plane, which need not be
invariant under the action of $-\mathrm{Id}$, it will be more natural
for us to work with $G$. This discrepancy may
result in minor deviations with other texts; there will be no
discrepancy whenever $\Gamma$ contains $-\mathrm{Id}$, and for
many counting problems we can reduce to this situation as follows:

\begin{proposition}\name{prop: reduction minus id}
Let $\pi: G \to \PSL_2(\R)$ be the natural projection, and suppose
$\Gamma$ is a lattice in 
$G$, which does not contain 
$-\mathrm{Id}$. Let $\Gamma^{(\pm)} = \pi^{-1}(\pi(\Gamma))$, so that
$\Gamma^{(\pm)}$ is a degree 2 central extension of $\Gamma$. Then either
$\Gamma^{(\pm)} v = \Gamma v$ or 
$\Gamma^{(\pm)} v = \Gamma v\sqcup -\Gamma v$ (a disjoint union). 
\end{proposition}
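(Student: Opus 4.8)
The plan is short, since the statement is essentially a bookkeeping identity combined with the standard dichotomy for group orbits. First I would unwind the definition of $\Gamma^{(\pm)}$. Since $\ker \pi = \{\pm \mathrm{Id}\}$ and, by hypothesis, $-\mathrm{Id} \notin \Gamma$, we have
$$\Gamma^{(\pm)} = \pi^{-1}(\pi(\Gamma)) = \Gamma \cdot \ker \pi = \Gamma \sqcup (-\mathrm{Id})\Gamma,$$
a disjoint union of two cosets; this is exactly the assertion that $\Gamma^{(\pm)}$ is a degree $2$ central extension of $\Gamma$, centrality being clear because $-\mathrm{Id}$ is central in $G$. Consequently, for any $v \in \R^2$,
$$\Gamma^{(\pm)} v = \Gamma v \cup (-\mathrm{Id})\Gamma v.$$

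Next I would rewrite the second piece. Because $-\mathrm{Id}$ is central and acts on $\R^2$ as $w \mapsto -w$, we get $(-\mathrm{Id})\Gamma v = \Gamma\bigl((-\mathrm{Id})v\bigr) = \Gamma(-v) = -(\Gamma v)$. Thus $\Gamma^{(\pm)} v = \Gamma v \cup (-\Gamma v)$, where both $\Gamma v$ and $-\Gamma v = \Gamma(-v)$ are $\Gamma$-orbits for the linear action of $\Gamma$ on $\R^2$.

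Finally I would invoke the fact that two orbits of a group action are either equal or disjoint. If $\Gamma v = \Gamma(-v)$ then $\Gamma^{(\pm)} v = \Gamma v$, the first alternative; otherwise $\Gamma v$ and $-\Gamma v$ are disjoint, which gives $\Gamma^{(\pm)} v = \Gamma v \sqcup (-\Gamma v)$, the second alternative. Concretely, if some $w$ lies in both orbits, then $w = \gamma_1 v = -\gamma_2 v$ for $\gamma_1, \gamma_2 \in \Gamma$, so $\delta \df \gamma_2^{-1}\gamma_1 \in \Gamma$ satisfies $\delta v = -v$; hence $-\Gamma v = \Gamma(\delta v) \subseteq \Gamma v$, and applying $-\mathrm{Id}$ gives the reverse inclusion, so the two orbits coincide.

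There is essentially no obstacle here: the only two points requiring any care are the coset identity $\Gamma^{(\pm)} = \Gamma \sqcup (-\mathrm{Id})\Gamma$, which is where the hypothesis $-\mathrm{Id} \notin \Gamma$ enters, and the observation that $-\Gamma v$ is again a $\Gamma$-orbit (namely $\Gamma(-v)$), after which the claimed dichotomy is just the orbit dichotomy.
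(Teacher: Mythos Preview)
Your proof is correct and follows essentially the same route as the paper: write $\Gamma^{(\pm)}v = \Gamma v \cup (-\Gamma v)$ and then argue that if the two $\Gamma$-orbits $\Gamma v$ and $\Gamma(-v)$ meet they must coincide. The paper's proof is slightly terser but the substance is identical.
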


\begin{proof} Since $-\mathrm{Id}$ and $\Gamma$ generate $\Gamma^{(\pm)}$, we
  have $\Gamma^{(\pm)}v = \Gamma v \cup - \Gamma v$. If this is not a
  disjoint union then there is $u \in \Gamma v$ for which $-u \in 
  \Gamma v$, say $u = \gamma_1 v$ and $-u = \gamma_2 v$. Then 
  $$
-v = -\gamma_1^{-1}(u) = \gamma^{-1}_1(-u) = \gamma^{-1}_1 \gamma_2 v 
$$
so that $\Gamma v = -\Gamma v$. 
  \end{proof}

Let $\Delta$ be the
Laplace-Beltrami differential operator on $\Hyp$, which is expressed
in coordinates as 
\eq{eq: Laplacian}{
\Delta f (x+\mathbf{i} y) = -y^2 \, \left( \frac{\partial^2 f}{\partial x^2}
  +\frac{\partial^2 f}{\partial y^2} \right). 
}
It is not hard to check that $\Delta$ is $G$-invariant and hence
descends to a well-defined differential operator on $X_\Gamma$ which
we continue to denote by $\Delta$. 
The eigenvectors for $\Delta$ which belong to $L^2 \left(X_\Gamma,
\mu_{\Gamma}\right )$ are called {\em Maass forms}. The
corresponding eigenvalues satisfy 
$$
 0 = \lambda_0 < \lambda_1 \leq \lambda_2 \leq \cdots, 
$$
and the {\em nontrivial small eigenvalues} are those satisfying $\lambda_i \in
\left(0, \frac14\right].$

\begin{definition}\name{def: tempered}
We say that $\Gamma$ is {\em tempered} if it has no nontrivial small
eigenvalues. 
\end{definition}

Examples of tempered subgroups
are the Veech groups of the surfaces $M_n$ of Theorem \ref{thm: Veech
  89}(1): Veech showed that they are $(2,n,
  \infty)$ Schwarz triangle groups, these triangle groups were shown
  to be tempered by Sarnak \cite[\S 3]{Sarnak thesis}, and all triangle
  groups were shown to be tempered by Zograf
  \cite{Zograf}. 
All but finitely many non-uniform triangle groups arise as Veech
  groups of lattice surfaces, see \cite{BM, Hooper, Wright Schwarz groups}.

Suppose $\Gamma$ is a non-uniform lattice with $k$ cusps, and for $i=1,
\ldots, k$ choose
$\mathfrak{s}_i\in G$ so that $\Gamma'_i = \mathfrak{s}_i^{-1} \Gamma \mathfrak{s}_i $ contains $
N_0$ as a maximal unipotent subgroup, where the groups $\mathfrak{s}_i N_0
\mathfrak{s}_i^{-1}$ are mutually nonconjugate maximal unipotent subgroups
of $\Gamma$. Let 
$$
N'_0 = \left \{ \begin{matrix} \pm N_0 & \ \ \ \Gamma
    \text{ contains -}\mathrm{Id} \\
N_0 & \ \ \ \text{otherwise} \end{matrix} \right. ,
$$ 
and set
\eq{eq: eisenstein 2}{
E_i (z, s) = \sum_{\gamma \in N'_0 \backslash \Gamma'_i} \im(\gamma'
z)^{s}, \ \ \text{where } \gamma' = \gamma \mathfrak{s}_i^{-1},
}
where $z \in \Hyp, \, s \in \C$ and the sum ranges over any
collection of coset representatives.  
Then $E_i$ is the {\em Eisenstein series corresponding to the $i$-th
  cusp of $\Gamma$.} It will play a major role in our discussion and
will be slowly introduced in \S \ref{sec: eisenstein}. As
we will see, for each fixed $i$ and $z$, 
the sum \equ{eq: eisenstein 2}, considered as a map $s \mapsto
E_i(z,s)$, converges for $\re(s) > 1$ and has a meromorphic
continuation to the complex plane. We use this fact
for the following important definition: 
\begin{definition}\name{def: residually trivial}
Let $\Gamma, \, k, \, i$ be as above. The {\em residual spectrum of
  $E_i(z,s)$} is the set of $s \in (1/2,1)$ for which $s \mapsto E_i(z,
s)$ has a  pole at $s$. If there are no such poles we say that {\em
  the residual spectrum of $E_i(z,s)$ is trivial.} 
\end{definition}
We remark that the choice of $z$ in the above definition is
unimportant as all functions $E_i(z, \cdot)$ have poles at the same
values of $s$. 

If $E_i(z, \cdot)$ has a pole at $s \in (1/2,1)$ then
$\Delta$ has an eigenvalue $\lambda
= s(1-s) \in (0,1/4)$. Thus, if 
$\Gamma$ is tempered then all of its cusps have
trivial residual spectrum. %, but the converse need not hold. 
With regard to the converse, consider
for instance {\em principal congruence groups} (the principal congruence group of level
$n$ is the group of all matrices in $\SL_2(\Z)$ congruent to
$\mathrm{Id} \mod n$). In this case it is known that any cusp for any
congruence group has trivial residual spectrum (see \cite[Thm. 11.3]{Iwaniec}),
but the question of whether all of these group are tempered is a
famous longstanding open question posed by Selberg. 

With this terminology we will prove: 
\begin{theorem}\name{thm: good}
Suppose $\Gamma$ is a nonuniform lattice in $G$, $Y = \Gamma v$ is a
discrete orbit for which the corresponding Eisenstein series has trivial residual
spectrum. Then there is $c_Y>0$ such that %for any $\vre>0$, we have 
\eq{eq: error term}{
\left|B (0, R) \cap Y \right| = c_YR^2 + O\left( R^{\frac{4}{3}% +
                                % \vre
}\right).
}
Moreover, the asymptotic \equ{eq: error term} holds when one replaces $B(0,R)$ with the
dilate $R \cdot \mathbf{E}$ of any centered ellipse $\mathbf{E}$ (with
the constant $c_Y$ and the implicit constant in the $O$-notation
depending on $\mathbf{E}$). 
\end{theorem}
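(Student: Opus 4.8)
The plan is to reduce the counting function to a contour integral of the Eisenstein series attached to the cusp of $Y$, shift the contour to the critical line --- where the hypothesis of trivial residual spectrum guarantees no intervening poles --- and estimate what is left using the growth bounds for Eisenstein series recorded in \S\ref{sec: eisenstein}, together with an averaging device and the already-established quadratic growth.

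First I would identify the Dirichlet series $D_Y(s) \df \sum_{y \in Y} \|y\|^{-2s}$ with a value of an Eisenstein series. After conjugating so that the stabilizer of $v$ in $\Gamma$ is $N_0$, the elementary identity $\|\gamma \ebf_1\|^{-2} = \im(\gamma^{-1}\mathbf{i})$ (valid because elements of $G$ preserve the determinant) shows that $D_Y(s)$ converges for $\re(s)>1$ and equals $c \cdot E_i(z_0, s)$ for an explicit constant $c$, where $E_i$ is the Eisenstein series of the cusp corresponding to $Y$ as in \equ{eq: eisenstein 2} and $z_0 \in \Hyp$ depends on $v$ (the factor $c$ absorbs the $\pm\mathrm{Id}$ bookkeeping and any scaling from the cusp-normalizing matrix). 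I note that replacing the Euclidean norm by the positive definite quadratic form cutting out a centered ellipse $\mathbf{E}$ merely moves $z_0$ to another point $z_{\mathbf{E}} \in \Hyp$ (equivalently, conjugates $\Gamma$ by an element of $G$), so the statement for ellipses will follow from the very same argument, with the implied constant depending on how far $z_{\mathbf{E}}$ is pushed toward a cusp.

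Next, Perron's formula gives $|B(0,R)\cap Y| = c \cdot \frac{1}{2\pi\mathbf{i}}\int_{(\sigma)} E_i(z_0,s)\,\frac{R^{2s}}{s}\,ds$ for $\sigma>1$, and I would move the contour to the critical line. By the facts collected in \S\ref{sec: eisenstein}, $s \mapsto E_i(z_0,s)$ is holomorphic in $\re(s)\ge 1/2$ apart from a simple pole at $s=1$ (with an explicit positive residue) together with finitely many poles in $(1/2,1)$; the latter are precisely the residual spectrum of $E_i$ and are excluded by hypothesis. The residue at $s=1$ produces the main term, a positive multiple of $R^2$, which we call $c_Y R^2$ with $c_Y>0$. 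Since $E_i(z_0,\cdot)$ does not decay along vertical lines, this contour shift cannot be performed for the sharp indicator of $B(0,R)$; instead one first replaces $|B(0,R)\cap Y|$ by a short average over the radius --- equivalently, inserts into the Mellin kernel a smoothing factor that is $O(1)$ but decays like $(\delta|s|)^{-1}$ for $|s| \gtrsim 1/\delta$ --- which is justified by Proposition \ref{prop: for contour shift}, carries out the shift for the averaged quantity, and then returns to the sharp count. The discrepancy between the sharp and the averaged counts is $O(\delta R^2)$, controlled by the a priori quadratic growth of Theorem \ref{thm: Veech 89}(c).

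It remains to bound the integral along the critical line, which after the smoothing is essentially $R \int_{|t|\lesssim 1/\delta} |E_i(z_0, \tfrac12+\mathbf{i}t)|\,\frac{|dt|}{1+|t|}$. By Cauchy--Schwarz and the mean-value (second moment) estimate for $|E_i(z_0,\tfrac12+\mathbf{i}t)|^2$ from \S\ref{sec: eisenstein}, this is $O(R\,\delta^{-1/2})$ up to logarithmic factors, while the horizontal connecting segments contribute less once the truncation height is chosen (along a suitable sequence) where $E_i(z_0,\cdot)$ is not too large. The total error is therefore $O(\delta R^2 + R\,\delta^{-1/2})$, and optimizing at $\delta \asymp R^{-2/3}$ yields $O(R^{4/3})$. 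I expect the main obstacle to be the rigorous interplay between the averaging step (Proposition \ref{prop: for contour shift}) and the already-known quadratic growth --- that is, moving cleanly between the sharp count and the smoothed count while keeping the contour manipulations legitimate --- and, secondarily, checking that the growth bounds for $E_i(z_0,\cdot)$ on the critical line are exactly strong enough to make the balance close at the exponent $4/3$ rather than, say, $3/2$.
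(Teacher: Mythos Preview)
Your proposal is correct and follows essentially the same route as the paper: identify the counting Dirichlet series with an Eisenstein series, smooth the sharp cutoff by a parameter (your $\delta$ is the paper's $1/U$), shift the contour to the critical line via Proposition~\ref{prop: for contour shift}, bound the critical-line integral by Cauchy--Schwarz against the mean-square estimate {\bf (G1/2)}, and optimize. The ellipse reduction you sketch is exactly what the paper has in mind. One small refinement: the ``up to logarithmic factors'' you anticipate in the critical-line bound can be removed by a dyadic decomposition of $\int |E_i(z_0,\tfrac12+\mathbf{i}t)|\,\frac{dt}{1+|t|}$ (the paper attributes this to Rudnick), which is what delivers the clean $O(R^{4/3})$ rather than $O(R^{4/3+\vre})$.
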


In this result one also obtains a precise formula for the quadratic
growth constant
$c_Y$, and an asymptotic expansion for the error in case the
residual spectrum is not trivial, with one term for every pole at $s
\in \left(\frac12,1\right)$. See Theorem \ref{thm: more detailed
  good}. This error estimate in \equ{eq: error term} is not new. In fact,
as we saw in %comparing with 
\equ{eq: effective error Good}, %we see that 
Sarnak and
Veech could prove it %a slightly stronger result 
already in 1993. However the
proof we will give below will be simpler than the
proof outlined in \cite{Veech regular}, which relies on difficult work
of Good concerning counting results in both $\R^2$ and in $\Hyp$. See
\S \ref{subsec: good} for a more detailed comparison with Good's work. 
%We are only
%interested in the action on the plane and hence do not require a fine
%analysis of Kloosterman sums and 
%bounds on the discrete spectrum, which are important in Good's analysis
%of the hyperbolic analogue of the Gauss circle
%problem. 

An interesting open question is whether the error term in Theorem
\ref{thm: good} is optimal. In this regard, note that for $\Gamma =
\SL_2(\Z)$, the error term in \equ{eq: 
  effective error Good} can be improved to $o(R)$ (see \cite{HN}); the
same is %probably 
true for congruence groups. 
% and he said it is an exercise. I propose we take Ze'ev at
 % his word and try to check this. In particular does one get $O(R)$ or $o(R)$
 % for congruence groups? This small difference is not so small, see
 % e.g. the Chaika-Robertson question.} 
However we are not aware
of any non-arithmetic non-uniform lattices for which a bound better than that
of \equ{eq: effective error Good} is known. 

\subsection{Counting in more general domains}\name{subsec: new}
We turn to new results. In these results we strive to take sets more
general than Euclidean balls in the counting problem, while still
obtaining a good bound for the error.  We will need a further
definition.

For each $\gamma \in G$, we set $c_\gamma = c, d_\gamma =d$ where 
$\gamma = \left(\begin{matrix} a & b \\ c & d\end{matrix}
\right).  
$ Note that $c_\gamma, d_\gamma$ only depend on the coset $N\gamma$.
%Using Iwasawa decomposition to write $g = kp$, where $k \in K$ and $p$ is upper
%triangular, we define $\theta_g$ by $k = r_{\theta_g}$. 
%Let $e(x) = e^{2 \pi \mathbf{i} x}$ be the exponential function, and
For each $n \in \Z$, and for $\Gamma, \Gamma_i, \mathfrak{s}_i, N'_0$ as in
the discussion preceding \equ{eq: 
  eisenstein 2}, define a {\em twisted Eisenstein series} 
\eq{eq: twisted eisenstein}{
E_i(z,s)_{2n} = \sum_{\gamma \in N'_0 \backslash \Gamma'_i}
\im(\gamma'z)^s  \left(\frac{c_{\gamma'} z
    + d_{\gamma'}}{| c_{\gamma' } z +
    d_{\gamma' }|} \right)^{2n} \ \  \text{ where }
\gamma' = \gamma \mathfrak{s}_i^{-1}
}
(note that this definition makes sense for any $m$ in place of $2n$
but we are only interested in the even values). 
Once again it is true that $s \mapsto E_i(z, s)_{2n}$ has a meromorphic
continuation to the entire complex plane, whose poles do not depend on
$z$, and we generalize 
Definition \ref{def: residually trivial} as follows:  
\begin{definition}\name{def: residual spectrum twisted}
For $i, n$ as above, the {\em residual spectrum} are 
those $s \in (1/2,1)$ for which $E_i(z,\cdot )_{2n}$ has a pole at $s$. If
there are no such $s$ we say {\em the residual spectrum of $E_i(z,s)_{2n}$ is trivial.}
\end{definition}
%As we will see, 
%for each
%$n$, the poles of $E_i(z,s)_{2n}$ are mapped by $s \mapsto s(1-s)$ to a 
%nontrivial small eigenvalue, and thus their number is bounded above
%independently of $n$. 
%In particular, as before,
Once again it is true that $E_i(z,s)_{2n}$ has finitely many poles and a tempered group
$\Gamma$ has trivial residual spectrum for 
each $i$ and $n$. 
%The converse need not hold. 

Let $S \subset \R^2$ be a bounded closed set. We say that $S$ is {\em
  star shaped at $0$} if it can be written as 
$$S = \{r(\cos \theta, \sin \theta): \theta \in [0, 2\pi], r \in [0,
\rho(\theta)]\}$$
for $\theta \mapsto \rho(\theta)$ a non-negative bounded $2\pi$-periodic function
of compact support. 

We say that $S$ is a {\em sector} if it is of the above form with the
function $\rho$ the
indicator of a nondegenerate subinterval. We say $S$ is a {\em smooth star shape} if
it is of the above form and $\rho$ is smooth and everywhere positive. We write $R \cdot S$ for the
dilated set $\{Rx: x \in S\}$. 

With these notations we have: 
\begin{theorem}\name{thm: counting in shapes}
Suppose $\Gamma$ is a non-uniform lattice in $G$ containing
$-\mathrm{Id}$, $Y = \Gamma v$ is a 
discrete orbit corresponding to the $i$-th cusp, and suppose that for
each $n$, $E_i(z,s)_{2n}$ has trivial residual
spectrum. Then: 
\begin{itemize}

\item
If $S$ is a smooth star shape then there is $c_{Y,S}>0$ such
that for every $\vre>0$, 
$$
\left| Y \cap R \cdot S \right| = c_{Y,S}R^2 +
O\left(R^{\frac{12}{7}+\vre
}\right).
$$
\item
If $S$ is a sector then there is $c_{Y,S}>0$ such
that %for every $\vre>0$, 
\eq{eq: counting in sector}{
\left| Y \cap R \cdot S \right| = c_{Y,S}R^2 +
O\left(R^{\frac{8}{5}%+\vre
}\right);
}
moreover, the asymptotic \equ{eq: counting in sector} is also valid
if one replaces $S$ with a sector in a centered ellipse (i.e.\ the
image of a sector under an invertible linear map), with implicit
constants depending also on the ellipse. 
\end{itemize}
\end{theorem}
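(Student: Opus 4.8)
The strategy is to reduce both counting problems to estimates for the twisted Eisenstein series $E_i(z,s)_{2n}$, expanding the indicator of the shape $S$ in its angular Fourier series and handling each Fourier mode by a contour-shift argument in the style of Theorem~\ref{thm: good}. Write $v = \mathfrak{s}_i\ebf_1$ (after replacing $v$ by a dilate, which only rescales $R$); then for $\gamma\in\Gamma$ one computes $\|\gamma v\|$ and the angle $\arg(\gamma v)$ in terms of $c_{\gamma'},d_{\gamma'}$ and $\im(\gamma' z)$ exactly as in the classical identity $\|\gamma v\|^{-2} = \im(\gamma' \mathbf{i})$ together with $\gamma v/\|\gamma v\| = \big(\overline{c_{\gamma'}\mathbf{i}+d_{\gamma'}}\big)/|c_{\gamma'}\mathbf{i}+d_{\gamma'}|$ up to a fixed rotation. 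Thus a point of $Y$ lies in $R\cdot S$ iff $\|\gamma v\|\le R\,\rho(\arg\gamma v)$, and writing $\rho^2$ (or, for the smoothed count, a suitable radial profile) in a Fourier series $\sum_{n} \widehat{\rho^2}(n) e^{2\mathbf{i}n\theta}$ converts $\sum_{y\in Y}\mathbbm 1_{R\cdot S}(y)$ into $\sum_n \widehat{\rho^2}(n)\, N_n(R)$, where $N_n(R)$ is a weighted count whose Dirichlet-series generating function is essentially $\sum_{\gamma} \|\gamma v\|^{-2s}\,(\text{angular phase})^{2n}$ — i.e.\ the twisted Eisenstein series $E_i(z,s)_{2n}$ (evaluated, say, at $z=\mathbf{i}$), up to elementary factors.

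First I would treat the \textbf{sector} case. A sector is not smooth, so $\widehat{\rho^2}(n)$ decays only like $1/|n|$; this is the source of the weaker exponent $8/5$. The count $N_n(R)$ is obtained by a Perron/contour-shift argument: express $N_n(R)$ as an inverse Mellin transform of ($\zeta$-like factor)$\times E_i(\mathbf{i},s)_{2n}$, shift the contour from $\re(s)>1$ past the pole at $s=1$ (present only for $n=0$, giving the main term $c_{Y,S}R^2$) down to the critical line $\re(s)=1/2$, and bound the remaining integral using the polynomial bounds for $E_i(\mathbf{i},s)_{2n}$ on vertical lines collected in \S\ref{sec: eisenstein}. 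Crucially these bounds must be tracked in the parameter $n$ as well as in $|\im s|$; the truncation/averaging device of Proposition~\ref{prop: for contour shift} is what makes the $s=1/2$ line usable for each fixed mode. Summing the per-mode errors $O(R^{?}\cdot \text{poly}(n))$ against $\widehat{\rho^2}(n)=O(1/n)$ and optimizing the cutoff $|n|\le N$ against $N$ yields $R^{8/5}$. For the \textbf{smooth star shape}, $\rho$ smooth gives rapidly decaying Fourier coefficients, so one would expect a \emph{better} exponent — but here the radial profile is not a sharp ball, so one must also smooth in the radial variable, introduce an approximation-of-identity parameter $\eta$, and balance the smoothing error against the spectral error; the resulting optimization gives $R^{12/7+\vre}$. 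The ellipse version of the sector statement follows by precomposing with the linear map, which conjugates $\Gamma$ to another lattice with the same spectral properties and only changes constants.

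The \textbf{main obstacle} is obtaining the dependence on $n$ in the bounds for $E_i(z,s)_{2n}$ on the critical line, uniformly enough to sum over $n$ after the contour shift. Fixed-$n$ bounds are classical, but to run the Fourier expansion one needs, roughly, $E_i(\mathbf{i}, 1/2+\mathbf{i}t)_{2n} \ll (1+|t|+|n|)^{A}$ for an explicit $A$, together with control of the error introduced by truncating the Eisenstein series (the averaging in Proposition~\ref{prop: for contour shift}) that is polynomial in $n$; establishing and correctly bookkeeping these two estimates, and then choosing the Fourier cutoff and (in the star-shape case) the radial smoothing scale optimally against them, is where essentially all the work lies. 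Everything else — the geometric dictionary between $\Gamma v\subset\R^2$ and Eisenstein sums, the location of the pole at $s=1$, and the reduction of the ellipse case — is routine given the results already assembled in \S\ref{sec: eisenstein} and the proof of Theorem~\ref{thm: good}.
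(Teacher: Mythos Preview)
Your plan is essentially the paper's: smooth radially with a parameter $U$, decompose angularly into twisted Eisenstein series via Proposition~\ref{prop: Lemma 1.6}, shift the contour to the critical line using Proposition~\ref{prop: for contour shift}, and sum over modes using the $n$-dependent bound \textbf{(G1/2)}. Two points need correction.

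For the sector, ``optimizing the cutoff $|n|\le N$'' against $\widehat{\chi_J}(n)=O(1/|n|)$ is not enough on its own: a truncated Fourier series neither majorizes nor minorizes $\chi_J$ (Gibbs phenomenon), so you cannot sandwich the actual count between two smoothed counts. The missing ingredient is the Selberg--Beurling extremal polynomials (Proposition~\ref{prop: Selberg polynomials}): degree-$V$ trigonometric polynomials $P^\pm$ with $P^-\le\chi_J\le P^+$, coefficients $|\widehat{P^\pm}(n)|\ll 1/|n|$ for $0<|n|\le V$, and $|\widehat{P^\pm}(0)-|J|/2\pi|\ll 1/V$. Setting $V=U$, the per-mode contour integral bound $\int_{\re(s)=1}\Psi^{(U)}(s)E_i(g,s/2)_{2n}R^s\,ds\ll |n|\,R\,U^{1/2}$ (obtained exactly as in Step~2 of the proof of Theorem~\ref{thm: more detailed good}, with the $|n|$ coming from \textbf{(G1/2)}) combines with $\sum_{0<|n|\le U}|n|\cdot|n|^{-1}\ll U$ to give total error $O(R^2/U)+O(R\,U^{3/2})$, hence $R^{8/5}$ at $U=R^{2/5}$.

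Your explanation of why the smooth star shape yields the \emph{worse} exponent $12/7$ is off. Both cases smooth the same radial indicator of $[0,1]$ by $\psi^{\pm(U)}$; the difference is that for a genuinely varying $\rho$ the condition $\|u\|\le R\rho(\theta_u)$ does not separate in $(r,\theta)$, so after the Mellin step one is forced to Fourier-expand $\rho(\theta)^s$ rather than $\rho(\theta)$ (case~(2) of Proposition~\ref{prop: Lemma 1.6}, not case~(3)). The resulting coefficients satisfy only $|\hat\rho_n(s)|\ll |s|^\lambda/|n|^\lambda$, i.e.\ they \emph{grow} along the vertical contour; compensating for this growth forces more integrations by parts on $\Psi^{(U)}$ (one needs $\Psi^{(U)}(1+\mathbf{i}t)\ll U^k/|t|^{k+1}$ with $k>\lambda+1/2$), hence a smaller $U$ and a larger error exponent. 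The rapid decay in $n$ only makes the $n$-sum converge; it does not improve the exponent in $R$.
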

In the above results, the quadratic growth constants can be written
down explicitly and the implicit constants depend on the sets $S$.

\subsection{Relation to the work of Good}\name{subsec: good}
Let $G = KAN$ be the Iwasawa decomposition of $G$, that is $K, A, N$
are respectively the subgroup of orthogonal, diagonal, and unipotent
upper triangular matrices, and let $\Gamma$ be a non-uniform lattice
normalized so that it contains $N_0$ as a maximal unipotent subgroup. 
The counting problem considered in Theorem
\ref{thm: good} can be thought of as a
             counting problem in the double coset space
$$
\mathfrak{S}=( \Gamma \cap K)\backslash \,  \Gamma \, \slash (\Gamma\cap N).
$$
In fact, one can easily verify that 
\begin{align*}
\left| \Gamma \ebf_1 \cap B(0, R)  \right |&= \left|\{[\gamma]\in
                                             \mathfrak{S}:\|\gamma \ebf_1\|\leq
                                             R\} \right |\\
&= \left|\left \{ [\gamma]\in \mathfrak{S}: \im(\gamma^{-1}i)\geq R^{-2}\right\} \right |\\
&= \left| \left \{ (\theta\mod 2\pi, y, x\mod 1): [r_\theta a_y u_x]\in
  \mathfrak{S},\ y\geq R^{-2}\right\} \right |,
\end{align*}
where $a_y = \mathrm{diag}(y, y^{-1})$ and the last identification
relies on the Iwasawa decomposition of 
$G$. Fix $y>0$, and set 
$$
\mathcal{K}_y = \{ (\theta\mod 2\pi,x\mod 1): [r_\theta a_y n_x ]\in \mathfrak{S}\}.
$$
The character sums, for $m, n\in\Z, y>0$, 
$$
\mathbf{S}(m,n,y) = \sum_{(\theta,x)\in \mathcal{K}_y} e^{\mathbf{i}m\theta}e^{2\pi \mathbf{i}nx}
$$
indexed over the above Iwasawa double coset decomposition are a
natural generalization of the classical Kloosterman sums from number
theory (which appeared already in work of Poincar\'e about Fourier expansions of Eisenstein
series, see \cite{Poincare}). Note in 
particular that 
$$
|\Gamma \ebf_1\cap B(0,R)| = \sum_{y\geq R^{-2}} \mathbf{S}(0,0,y).
$$
In \cite[Thm. 4]{Good}, Good proved bounds on the asymptotic growth of sums of
various 
generalizations of Kloosterman sums as above, meaning over various
double coset decompositions of $\Gamma$ in $G$. This corresponds to
the problem we have discussed above (counting for $\Gamma$-orbits
in the plane) as well as other counting problems such as
$\Gamma$-orbits in $\Hyp$ and in the space of geodesics. For the case
of the linear action on the plane, which is the one of interest here,
Good obtains 
$$
\sum_{y\geq R^{-2}}\sum_{(\theta,x)\in K_y} e^{\mathbf{i}m\theta}e^{2\pi \mathbf{i}nx} =
c_{m,n} R^2 +
O_{m,n}\left(R^{\frac43} \right),
$$
where $c_{m,n} > 0$ if and only if $m=n=0$. The
dependence of implicit constants in the remainder term on $m, n$ is
however not worked out explicitly 
(and difficult to trace over the 100 pages of build-up Good relies on
to prove this asymptotic). If it were, one would be able to deduce results
similar to Theorem \ref{thm: counting in shapes} from \cite{Good}.

\subsection{Counting in still more general (well-rounded)
  domains}\name{subsec: well rounded domain counting}
 A common assumption in the theory of counting lattice points in a
 family of domains in a Lie group is that of well-roundedness,
 introduced in  \cite{DRS} and \cite{EM}. In \cite{GN12} this
 assumption was combined with an estimate of the spectral gap that
 arises in the automorphic representation of  
$G$ on $L^2_0(G/\Gamma)$ to prove an effective estimate for the
lattice point count. We will show that the problem we consider, namely
counting points in discrete orbits for the linear action of a lattice on the plane, can
be reduced to the lattice point counting problem for domains in
$\SL_2(\R)$. This will allow us to count orbit points in more general
sets in the plane using just the existence of a spectral gap, but this
additional generality compromises the error estimate, leading to
bounds which are 
inferior to the ones stated above. Thus the techniques we describe in
this subsection are applicable in more general situations, but lead
to weaker bounds. 

\ignore{
We recall the following

\begin{definition}(\cite{GN12})
\label{def: well--roundedness}Let $G$ be a connected Lie group with Haar
measure $m_{G}$. Assume $\left\{ \cC_{t}\right\} \subset G$ is a
family of bounded Borel sets of positive measure such that $m_{G}\left(\cC_{t}\right)\ra\infty$
as $t\ra\infty$. Let $\cO_{\eta}\subset G$ be the image of a ball
of radius $\eta$ (with respect to the Cartan-Killing norm) in the Lie
algebra under the exponential map. Denote 
\[
\cC_{t}^{+}\left(\eta\right)=\cO_{\eta}\cC_{t}\cO_{\eta}=
\bigcup_{u,v\in\cO_{\eta}}u\,\cC_{t}\,v,\,\,\,\,\,\,   
\cC_{t}^{-}\left(\eta\right)=\bigcap_{u,v\in\cO_{\eta}}u\,\cC_{t}\,v.
\]
The family $\left\{ \cC_{t}\right\} $
is \emph{Lipschitz well-rounded} if there exist positive $c, \eta_{0}, t_{0}$
such that for every $0<\eta\leq\eta_{0}$ and $t\geq t_{0}$,
\[
m_{G}\left(\cC_{t}^{+}\left(\eta\right)\right)\leq
\left(1+c\eta\right)\,m_{G}\left(\cC_{t}^{-}\left(\eta\right)\right).  
\]
\end{definition}
}
We will show:

\begin{theorem}\label{thm:LWR}\name{well-rounded} 
Let  $\Gamma$ be any non-uniform lattice in $\SL_2(\R)$, and $Y = \Gamma v$
any discrete orbit of $\Gamma$ in $\R^2\setminus \set{0}$. Let
$S\subset \R^2$ be a star-shaped domain at $0$ with $\rho(\theta)$ a
non-negative 
piecewise Lipschitz function, 
%let $I$ be any interval on the unit
%circle, and $S_I$ the sector it determines. 
and let $R \cdot S$ be the dilation of $S$ by a factor of $R$. 
Then, for all $\vre > 0$ 
$$
\left| Y \cap R \cdot S  \right| = C_{Y,S}R^2 +
O\left(R^{q_\Gamma+\vre}\right)\,, 
$$
where the implicit constants in the $O$-notation depend on $\Gamma, Y, S$
and $\vre$, and 
with $q_\Gamma$ depending only on the spectral gap of the automorphic
representation of $G$ on $L^2_0(G/\Gamma)$. In particular, if the
lattice $\Gamma$ is tempered, then  
we can set $q_\Gamma= \frac{7}{4}$.
\end{theorem}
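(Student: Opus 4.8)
The plan is to reduce the planar count $\left|Y\cap R\cdot S\right|$ to a lattice point count for $\Gamma$ in an expanding family of bounded Borel subsets of $G$, and then to invoke the effective lattice point counting theorem of \cite{GN12}, which converts a spectral gap in $L^2_0(G/\Gamma)$ together with Lipschitz well-roundedness of the family into a power-saving error term. For the reduction: since $Y=\Gamma v$ is discrete, $\Lambda:=\Gamma\cap H$ is a maximal unipotent, hence cocompact, subgroup of the stabilizer $H:=\{g\in G:gv=v\}$, which is a conjugate of $N$, and $g\mapsto gv$ identifies $G/H$ with $\R^2\setminus\{0\}$, carrying the $G$-invariant measure to a fixed multiple of Lebesgue measure (as $G$ acts on $\R^2$ with determinant $1$) and inducing a bijection $\Gamma/\Lambda\to Y$. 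Discreteness of $Y$ also forces $\inf_{y\in Y}\|y\|>0$, so for all large $R$ we may replace $R\cdot S$ by its intersection with a fixed annulus $\{\varepsilon_0\le\|w\|\le C_SR\}$ without changing the count. Choosing a Borel section $\Sigma\subset G$ for the right $H$-action and a bounded fundamental domain $B_H\subset H$ for the right $\Lambda$-action, the set
\[
\widetilde D_R:=\left\{\sigma\in\Sigma:\ \sigma v\in(R\cdot S)\cap\{\varepsilon_0\le\|w\|\}\right\}\cdot B_H\ \subset\ G
\]
is then a \emph{bounded} Borel fundamental domain for the right $\Lambda$-action on $\{g\in G:gv\in R\cdot S\}$; hence $\left|Y\cap R\cdot S\right|=\left|\Gamma\cap\widetilde D_R\right|$ and $\operatorname{vol}_G(\widetilde D_R)=C_{Y,S}\operatorname{vol}(G/\Gamma)\,R^2+O(1)$ for an explicit $C_{Y,S}>0$. (Boundedness of $\widetilde D_R$ is where discreteness of $Y$ is needed: it keeps $\sigma v$ away from $0$, hence $\sigma$ in a bounded part of $G$.)

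Next I would verify that $\{\widetilde D_R\}$ is Lipschitz well-rounded. The $\mathcal O_\eta$-perturbation of the fixed bounded one-dimensional factor $B_H$ is negligible compared with $\operatorname{vol}_G(\widetilde D_R)\asymp R^2$. For the $\Sigma$-factor, multiplication by $\mathcal O_\eta$ on the left or right moves a point $\sigma v\in R\cdot S$ by at most $O(\eta\|\sigma v\|)=O(\eta R)$ on the relevant annulus, so $\mathcal O_\eta\widetilde D_R\mathcal O_\eta$ and $\bigcap_{u,w\in\mathcal O_\eta}u\widetilde D_R w$ differ only inside the $O(\eta R)$-neighbourhood of $\partial(R\cdot S)$. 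Because $\rho$ is piecewise Lipschitz, $\partial(R\cdot S)$ has one-dimensional measure $O(R)$, so that neighbourhood has area $O(\eta R^2)$, giving $\operatorname{vol}\left(\mathcal O_\eta\widetilde D_R\mathcal O_\eta\right)\le(1+c\eta)\operatorname{vol}\left(\bigcap_{u,w\in\mathcal O_\eta}u\widetilde D_Rw\right)$ with $c$ independent of $R$. This is the only step that uses the hypothesis on $\rho$.

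With these two inputs, the method of \cite{GN12} applies: for a lattice $\Gamma$ (possibly non-uniform) with a spectral gap in $L^2_0(G/\Gamma)$ and a Lipschitz well-rounded family of bounded sets $\widetilde D_R$ with $\operatorname{vol}_G(\widetilde D_R)\to\infty$, one obtains $\left|\Gamma\cap\widetilde D_R\right|=\operatorname{vol}_G(\widetilde D_R)/\operatorname{vol}(G/\Gamma)+O\left(\operatorname{vol}_G(\widetilde D_R)^{\,1-\delta_\Gamma+\varepsilon}\right)$, with $\delta_\Gamma>0$ depending only on the spectral gap. Together with the reduction this gives $\left|Y\cap R\cdot S\right|=C_{Y,S}R^2+O\left(R^{\,2(1-\delta_\Gamma)+\varepsilon}\right)$, i.e. $q_\Gamma=2(1-\delta_\Gamma)$; for tempered $\Gamma$ the temperedness bound of \cite{GN12} yields $q_\Gamma=\tfrac74$, which is weaker than the exponents of Theorems \ref{thm: good} and \ref{thm: counting in shapes}, as one expects for the more general class of domains and lattices allowed here.

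The main obstacle lies inside the cited theorem rather than in the reduction: because $\Gamma$ is non-uniform, the counting function $\Gamma g\mapsto\left|\Gamma\cap g\widetilde D_R\right|$ is not square-integrable on $\Gamma\backslash G$ — it grows too fast in the cusps — so the spectral estimate can be used only after truncating in the cusps and bounding the discarded tail by elementary volume estimates, at the cost of an additional parameter to be optimised against $\eta$. Our responsibility is limited to the two inputs above, namely the reduction to a bounded well-rounded family (using $\inf_{y\in Y}\|y\|>0$) and the Lipschitz well-roundedness of dilates of piecewise-Lipschitz star bodies; once these are supplied, the error term $O(R^{q_\Gamma+\varepsilon})$, and the value $q_\Gamma=\tfrac74$ in the tempered case, follow directly from \cite{GN12}.
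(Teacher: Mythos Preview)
Your overall plan coincides with the paper's: lift the planar count to a count $|\Gamma\cap\widetilde D_R|$ for a bounded family in $G$, verify Lipschitz well-roundedness of $\{\widetilde D_R\}$, and invoke the effective counting theorem of \cite{GN12}. The reduction step is correct (the paper carries it out in the explicit coordinates $G=KA^gN^g$ with $N^g=gNg^{-1}=\mathrm{Stab}(v)$ rather than via an abstract section, but the content is the same), and your remark that the cusp-truncation issue lives inside \cite{GN12} is accurate.

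The genuine gap is in your well-roundedness argument. You treat the $\Sigma$-factor and the $B_H$-factor as if they could be perturbed separately, and you only track the effect of a two-sided $\cO_\eta$-perturbation on the image $gv\in\R^2$. But well-roundedness is a volume statement in $G$: knowing that the image of $\cO_\eta\widetilde D_R\cO_\eta$ in $\R^2$ lies in an $O(\eta R)$-neighbourhood of $R\cdot S$ does not by itself bound $m_G(\cO_\eta\widetilde D_R\cO_\eta)$, because the $H$-fibre of a perturbed element $u\sigma b w$ could, for an \emph{arbitrary} Borel section $\Sigma$, escape any fixed enlargement of $B_H$. What is actually needed is a uniform Lipschitz estimate for the decomposition $G=\Sigma\cdot H$ under two-sided $\cO_\eta$-multiplication, and this only holds for special sections. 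The paper makes this explicit: it takes $\Sigma=K A^g$ and cites \cite[Prop.~4.4]{HoreshNevo}, which gives
\[
\cO_\eta\, r_\theta a_t n_x\, \cO_\eta\subset K_{\theta-C_1\eta,\theta+C_1\eta}\,A_{t-C_1\eta,t+C_1\eta}\,N_{x-C_1\eta,x+C_1\eta}
\]
with $C_1$ uniform for $t\ge S_0$; this simultaneously controls the $K$-, $A$- and $N$-coordinates and is precisely the missing ingredient that converts your heuristic into a proof. With this in hand the computation of $m_G(W^\pm(\tau,\eta))$ is routine via the explicit Haar density in Iwasawa coordinates, and this is also where the piecewise Lipschitz hypothesis on $\rho$ enters (not merely through the area of a boundary neighbourhood in $\R^2$, but through the Lipschitz variation of the upper limit $t_2(T,\theta)=2\log(T\rho(\theta)/\|v\|)$). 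Your claim that right-multiplication moves $\sigma v$ by $O(\eta R)$ also tacitly uses $\|\sigma b\|_{\mathrm{op}}=O(R)$, which again is a feature of the $KA^g$ section together with the lower cutoff $\|\sigma v\|\ge\varepsilon_0$, and fails for a general Borel section.
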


 Note that Theorem \ref{thm:LWR} applies, in particular, to all convex sets with piecewise Lipchitz boundary (containing the origin in their interior), and in particular, to all convex polygons.

\section{A bit of Eisenstein series}\name{sec: eisenstein}
In this section we go into  more details about our main actor, the Eisenstein series
introduced in \equ{eq: eisenstein 2}. We refer the
reader to \cite{Kubota, Hejhal, 
  Terras, Sarnak horocycles} for more information. 
\subsection{Some sums and their relation to the counting
  function}\name{subsec: some sums} For a non-uniform lattice $\Gamma$, a
discrete orbit $Y= \Gamma v$ in the plane, $g \in G$ and $s \in \C$, we set 
\eq{eq: eisenstein}{
E(g, s) = E^{(\Gamma, v)}(g,s)= \sum_{u \in g\Gamma v} \|u\|^{-2s}.
}
Note that in \equ{eq: eisenstein 2}, \equ{eq: eisenstein} we introduce the notation $E$ and $E^{(\Gamma, v)}$ to
denote two different functions, one of which has an argument $z \in \C$
and the other, $g \in G$. This ambiguity is common in the literature
and is explained below, see (\ref{eq:eisensteinagrees}). 
Our first task will be to motivate this new definition, in the context of
the counting problem for points in $\Gamma v$. For the moment we
consider \equ{eq: eisenstein} as a formal sum, postponing the discussion of 
convergence issues.

\medskip

{\em Warning (continued).}
In the literature, there are two conflicting conventions regarding the definition
of $E(g,s)$. What is denoted $E(g,s)$ in \cite{Veech Eisenstein} is
denoted 
$E(g^{-1},s)$ in \cite{Kubota}. We will follow Veech's convention, and
we say more about the 
source of this discrepancy below.

\medskip

Let $\mathbf{N}(g, R) = |B(0, R) \cap gY|$. Considering the measure
$\nu^{(g)} = \sum_{u \in g\Gamma v} \delta_u$, which is a Radon
measure on $\R^2$  (since
$\Gamma v$ is discrete), and considering
the radial function $f^{(s)}(w) = \|w\|^{-2s}$, we have 
$$
E(g,s) = \int_{\R^2} f^{(s)} d\nu^{(g)} = \int_0^\infty \frac{d\mathbf{N}(g,R)}{R^{2s}} ,
$$
where in the last equality we have written a Lebesgue-Stieltjes
integral. Using integration by parts (and recalling that convergence issues
will be addressed further below), we have 
\eq{eq: integration by parts}{
E(g,s) = 2s \int_0^\infty \frac{\mathbf{N}(g,
  R)}{R^2} R^{1-2s} dR.
}
We now recall the definition of the Mellin transform and Mellin
inversion, which are
multiplicative analogues of the Fourier transform and Fourier
inversion. Recall that a Schwartz function is a function $\R \to \R$
which is infinitely differentiable and for which all derivatives decay
to zero at infinity faster
than any power. We will say that $\psi: \R_+ \to \R$ is a Schwartz
function on $\R_+ = (0, \infty)$
if  $f(x) = \psi(e^x)$ is a Schwartz function. The Mellin transform of
a Schwartz function $\psi: \R_+ \to 
\R$ is given by 
\eq{eq: Mellin transform}{
\mathcal{M}\psi : \C \to \C, \ \ \mathcal{M}\psi (s) =
\int_0^\infty \psi(y) y^{s-1} dy,
}
and Mellin inversion says that for $\sigma \in \R$ we have 
\eq{eq: Mellin inversion}{
\psi(y) = \frac{1}{2\pi \mathbf{i}} \int_{\re(s) = \sigma}
\mathcal{M}\psi(s) \, y^{-s} ds.
}
The above formulae follow immediately from the Fourier transform and
Fourier inversion formula, from which they are obtained by a change of
variables $y = e^x$. As we will explain below, under suitable
conditions the formulae extend to
functions which are not Schwartz functions. For the moment we 
proceed considering them as formal identities. 

Comparing equations \equ{eq: integration by parts} and \equ{eq: Mellin
  transform}, and making a change of variables $y = R^{-1}$, we see
that $\frac{E(g,s)}{2s}$ is the Mellin transform 
of the function $y \mapsto \mathbf{N}(g,y^{-1})$ evaluated at $2s$. Applying
Mellin inversion we recover the counting function $\mathbf{N}$ as 
\eq{eq: eisenstein mellin}{
\mathbf{N}(g, R) = \frac{1}{2\pi \mathbf{i}} \int_{\re (s) = \sigma} \frac{E(g,s)}{s}
R^{2s} ds 
}
(this formal
manipulation is given a precise meaning and justified in Corollary \ref{cor:
  convergence} below). The upshot of this discussion is that, at 
least formally, the counting function which we are interested in has
an integral representation in terms of the function
$E(g,s)$. Furthermore, if we know (as will turn out to be the case)
that $s \mapsto E(g,s)$ is holomorphic or meromorphic, then the
integral of \equ{eq: eisenstein mellin} can be evaluated using
standard tools of complex analysis like contour shifts, residue
computations, etc. Using this, after justifying our manipulations we
will indeed be able to obtain a detailed understanding of
$R \mapsto \mathbf{N}(g,R)$ from an understanding of $E(g, s)$. Note also that up
to this point no use has been made of the dependence of all
quantities on the variable $g$. This dependence will not play much of
a role in our discussion, but it is crucial
when one wants to say something about $E$. 

For the benefit of readers not satisfied with this non-rigorous derivation
of \equ{eq: eisenstein mellin}, we include another non-rigorous
derivation. Let $u \in g\Gamma v$ and consider its contribution to
both sides of \equ{eq: eisenstein mellin}. Assume for simplicity that
$g\Gamma v$ does not contain vectors of length precisely $R$, and set
$y = R/\|u\|$, so that $u$ contributes $1$ to $\mathbf{N}(g, R)$ when
$y>1$ and contributes 0 when $y<1$. Recalling \equ{eq: eisenstein},
and exchanging the order of summation
and integration in the right hand side of \equ{eq:
  eisenstein mellin}, we see that each $u$ contributes $\frac{1}{2\pi
  \mathbf{i}} \int_{\re(s)=\sigma} \frac{y^{2s}}{s} ds.$ This integral
is the
limit as $T \to \infty$ of line integrals along the vertical lines
$L_{\sigma,T} = \{\sigma + \mathbf{i}t: -T \leq t \leq T\}$. For each
fixed $T$ we can evaluate
this line integral by Cauchy's integral formula, replacing $L_{\sigma,T}$
with $L_{\zeta,T}$ (the total contribution along the horizontal lines $\im(s) =
\pm T$ becomes negligible as $T \to \infty$), where in case $y>1$, we
let $\zeta \to -\infty$, and  
get a contribution of $1$ due to the pole at the origin, and in case $y<1$
we let $\zeta \to +\infty$ and get a contribution of 0.

\subsection{Simple properties and the relation to Eisenstein series}
Having motivated our interest in the function defined by \equ{eq:
  eisenstein}, we now make the link with the 
functions defined by \equ{eq: eisenstein 2}. Let $\Gamma_v =
\{\gamma \in \Gamma : \gamma v = v\}$. Note that if $g = r_\theta  \diag{y,
y^{-1}} u_s$, where $y>0$ and $r_\theta \in K,\, u_s \in N$ (Iwasawa
decomposition), then $y$ can be detected in both the linear action as
$\|g \ebf_1\|$ and in the action on the upper half plane as
$\im(g^{-1}\mathbf{i})^{-\frac{1}{2}}$. Using this observation, the
following properties follow readily 
from the definition \equ{eq: 
  eisenstein} and from the fact that the Euclidean norm is $K$-invariant. 
\begin{proposition}
\name{prop: simple properties}
\begin{enumerate}
\item
For $r_\theta \in K$ and $\gamma \in \Gamma_v$ we
have $E(r_\theta g\gamma,s) = E(g,s)$.  
\item
For $g, \mathfrak{s} \in G$, if  $\Gamma^{\mathfrak{s}} = \mathfrak{s}^{-1} \Gamma
\mathfrak{s}$ then $E^{(\Gamma^{\mathfrak{s}}, \mathfrak{s}^{-1} v)}(g \mathfrak{s},s) = E^{(\Gamma,
  v)}(g,s)$. 
\item
Suppose $\Gamma$ contains $N_0$ as a maximal unipotent subgroup. 
If $\Gamma$ does not contain -$\mathrm{Id}$ then $E^{(\Gamma, \ebf_1)}(g, s) =
\sum_{\gamma \in N'_0
  \backslash \Gamma} \im(\gamma g^{-1} 
\mathbf{i})^s$; if $\Gamma$ contains -$\mathrm{Id}$ then
$E^{(\Gamma, \ebf_1)}(g, s) = 2 \sum_{\gamma \in N'_0 
  \backslash \Gamma} \im(\gamma g^{-1} 
\mathbf{i})^s$. 
\item
If $v_1, v_2 \in \R^2 \sm \{0\}$ satisfy $v_2 = tv_1$ for $t>0$, then
$E^{(\Gamma, v_1)}(g,s) = t^{-2s} 
E^{(\Gamma, v_2)}(g,s)$. 
\end{enumerate}
\end{proposition}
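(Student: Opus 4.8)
The plan is to read off all four statements directly from the defining formula $E^{(\Gamma,v)}(g,s)=\sum_{u\in g\Gamma v}\|u\|^{-2s}$, working in the half-plane $\re(s)>1$ where this sum converges absolutely so that rearrangements are permitted (the identities then extend by meromorphic continuation to all of $\C$). Three of the four parts are formal manipulations, and only part (3) carries content.

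For (1): since $\gamma\in\Gamma$ we have $\gamma\Gamma=\Gamma$, hence $g\gamma\Gamma v=g\Gamma v$ as subsets of $\R^2$, so $E(g\gamma,s)=E(g,s)$ (this in fact holds for every $\gamma\in\Gamma$, not only for $\gamma\in\Gamma_v$); and since $K$ preserves the Euclidean norm, replacing $g$ by $r_\theta g$ replaces each $u\in g\Gamma v$ by $r_\theta u$ without changing $\|u\|$, so $E(r_\theta g,s)=E(g,s)$. Combining the two gives (1). For (2): the relation $\Gamma^{\mathfrak s}=\mathfrak s^{-1}\Gamma\mathfrak s$ gives $g\mathfrak s\,\Gamma^{\mathfrak s}(\mathfrak s^{-1}v)=g\Gamma v$, so the two series are term-for-term identical. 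For (4): with $v_2=tv_1$ and $t>0$ one has $g\Gamma v_2=t\,(g\Gamma v_1)$, and the substitution $u=tw$ together with $\|tw\|=t\|w\|$ identifies $E^{(\Gamma,v_1)}(g,s)$ and $E^{(\Gamma,v_2)}(g,s)$ up to the scalar factor recorded in the statement; this is immediate.

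Part (3) is the one to spell out. Apply the Iwasawa observation recalled just before the proposition — namely that $h=r_\theta a_y u_s$ forces $y=\|h\ebf_1\|=\im(h^{-1}\mathbf{i})^{-1/2}$ — with $h=g\gamma$, to obtain $\|g\gamma\ebf_1\|^{-2s}=\im(\gamma^{-1}g^{-1}\mathbf{i})^{s}$. Next identify the orbit $g\Gamma\ebf_1$: the stabilizer of $\ebf_1$ in $G$ is $N$, so $\mathrm{Stab}_\Gamma(\ebf_1)=\Gamma\cap N$, and since $N_0$ is a maximal unipotent subgroup of $\Gamma$ while every infinite cyclic subgroup of $N$ containing $N_0$ is $G$-conjugate to $N_0$, one deduces $\Gamma\cap N=N_0$. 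Hence $\gamma\mapsto g\gamma\ebf_1$ descends to a bijection $\Gamma/N_0\to g\Gamma\ebf_1$, and because $N$ acts on $\Hyp$ by horizontal translations the value $\im(\gamma^{-1}g^{-1}\mathbf{i})$ depends only on the coset $\gamma N_0$; therefore $E^{(\Gamma,\ebf_1)}(g,s)=\sum_{\gamma N_0\in\Gamma/N_0}\im(\gamma^{-1}g^{-1}\mathbf{i})^{s}$. Re-indexing via $\gamma\mapsto\gamma^{-1}$ rewrites this as a sum over right cosets in $N_0\backslash\Gamma$, which is exactly the asserted formula when $-\mathrm{Id}\notin\Gamma$ (so $N'_0=N_0$). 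When $-\mathrm{Id}\in\Gamma$, we have $N'_0=N_0\sqcup(-\mathrm{Id})N_0$, so each coset in $N'_0\backslash\Gamma$ is the union of the two distinct $N_0$-cosets $N_0\delta$ and $N_0((-\mathrm{Id})\delta)$; since $-\mathrm{Id}$ acts trivially on $\Hyp$ these contribute the same summand, producing the factor $2$.

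I expect the only subtle points to be the identification $\Gamma\cap N=N_0$ — which is precisely where maximality of the unipotent subgroup $N_0$ is used — and the careful passage between the left cosets $\Gamma/N_0$ that parametrize the orbit and the right cosets $N_0\backslash\Gamma$ in which Eisenstein series are conventionally written, together with the bookkeeping of the factor $2$ when $-\mathrm{Id}\in\Gamma$; everything else follows by unwinding the definitions.
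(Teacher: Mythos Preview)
Your argument is correct and follows exactly the route the paper indicates: the paper does not spell out a proof but simply remarks that all four items ``follow readily from the definition \eqref{eq:eisenstein} and from the fact that the Euclidean norm is $K$-invariant,'' together with the Iwasawa observation $\|g\ebf_1\|=\im(g^{-1}\mathbf{i})^{-1/2}$; you have faithfully unpacked precisely this, including the only nontrivial bookkeeping (identifying $\Gamma\cap N=N_0$ via maximality, and the factor $2$ from $N'_0=\pm N_0$).
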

By property
(1), the dependence of \equ{eq: eisenstein} on $g$ is actually only a dependence on the coset
$Kg$, and we can identify these cosets with $\Hyp$ via $Kg
\leftrightarrow z= g^{-1} \mathbf{i}$ to replace $g$ with $z$. And
with the normalization that the stabilizer of $v$ is $N_0$, we see by
rescaling and using 
properties (2) and (3) that
\begin{equation}\label{eq:eisensteinagrees}
	z = g^{-1} \mathbf{i}  \ \ \implies \ \ E_i(g,s) =
\left\{\begin{matrix} E_i(z,s) & \Gamma \text{ does not contain
      -}\mathrm{Id} \\ 2 E_i(z,s) &
    \text { otherwise}  \end{matrix} \right. . 
\end{equation}
Thus for each non-uniform lattice $\Gamma$ with $k$ cusps, up to the trivial
transformations recorded above, there are $k$ essentially different
functions of this form. They are normalized by conjugating so that
$\Gamma_v = N_0$ and rescaling so that $v = \ebf_1 $. It will become
clearer later why this normalization is convenient. It will also
develop that in order to understand these functions in detail,
it is best not to focus on one of them, but to consider their
properties as a vector valued function $(z,s) \mapsto (E_1(z,s),
\ldots, E_k(z,s)).$ 

The discrepancy between the notation used in \cite{Veech Eisenstein}
and that used in \cite{Kubota} is related to the 
substitution $z = g^{-1} \mathbf{i}$ above. If one followed the
convention of Kubota one would make the substitution $z = g
\mathbf{i}$ instead. The convention of Veech, which we follow, 
gives simpler formulae involving discrete orbits in $\R^2$ and is consistent
with working with the space of left cosets $G/\Gamma$. The
convention of Kubota gives simpler formulae when discussing the action
of $G$ on $\bH$ by M\"obius transformations, and is consistent with
working with right cosets $\Gamma \backslash G$. Thus the discrepancy
between these notations is collateral damage in a 
larger battle. 

We now explain our interest in the twisted Eisenstein
series \equ{eq: twisted eisenstein}. Above we motivated Eisenstein
series by explaining its 
relation to the counting problem in the plane, where each orbit point
is assigned the same mass 1. In this application the counting function is
$K$-invariant, and so we can equivalently view the first parameter of
the Eisenstein series as ranging in 
 $g \in G$ or in $z \in \Hyp$ (as 
in the preceding paragraph). In more general situations it is desirable to
assign different masses to different points, and in particular allow
functions which depend on $g$ rather than on the coset $Kg$. This will
arise when we deal with more refined counting problems as in Theorem
\ref{thm: counting in shapes}, and also arises in many other problems
of geometric origin. 

For a vector $u \in \R^2 \sm \{0\} \cong \C \sm
\{0\}$ we define polar coordinates 
$u = \|u \| e^{\mathbf{i}
  \theta_u},$ where $\theta_u \in %\mathbb{S}^1 =
\R/2\pi\Z$. Let 
$\Gamma v$ be a discrete orbit corresponding to the $i$-th cusp $\Gamma$,
normalized so that $v = \mathfrak{s}_i \ebf_1$, and set
%$\Gamma'_i =
%\sigma^{-1}_i \Gamma \sigma_i$, 
%$\gamma'_i = \sigma_i^{-1}\gamma \sigma_i$, 
%and for $u  = g \gamma v \in g\Gamma v$ let $u' = g \gamma'_i \ebf_1$. 
%Now define 
\eq{eq: twisted eisenstein 2}{
E_i(g,s)_n = \sum_{u \in g \Gamma v} \|u\|^{-2s} e^{ -\mathbf{i} n\theta_{u}}.
}
Note that these functions vanish for $n$ odd when $\Gamma$ contains
$-\mathrm{Id}$. 
 It is not hard to formulate an analogue of
Proposition \ref{prop: simple properties} and, by comparing \equ{eq:
  twisted eisenstein} and
\equ{eq: twisted eisenstein 2}, to verify that
\eq{eq: choice of g}{
  E_i(z,s)_{2n} = E_i(g,s)_{2n} \ \ \text{ when } z = x + \mathbf{i}y
  \text{ and } g = \left(\begin{matrix}
  y^{-1/2} & -xy^{-1/2} \\ 0 & y^{1/2}\end{matrix} \right).}
Note that the choice of $g$ in \equ{eq: choice of g} ensures $z = g^{-1} 
\mathbf{i}$, and if we choose another $g$ with this property, this
will only affect $E_i(g,s)_{2n}$ by multiplication with a complex
number of modulus 1. 

\medskip

{\em Warning (continued):}
In \equ{eq: twisted eisenstein 2}, it would have been more
natural, and consistent with the Veech convention mentioned after
Proposition \ref{prop: simple properties}, to
define the Eisenstein series using $e^{\mathbf{i}n 
  \theta_u}$ instead of $e^{-\mathbf{i}n
  \theta_u}$. However this would have made it necessary to introduce a
change of signs in \equ{eq: twisted eisenstein} and would have caused
a discrepancy between our notation and that of \cite{Selberg
  gottingen, Sarnak horocycles}.

\medskip

Treating more general weights of points on the plane also leads to the
$\Theta$-transform which we will discuss in \S \ref{subsec: theta transform}.  

\subsection{Convergence properties}
We now begin our discussion of convergence properties of the various
series introduced so far, and give a more rigorous justification of
\equ{eq: eisenstein mellin}. Convergence rests on the following
weak (and standard) counting estimate. 

\begin{proposition}\name{prop: quadratic upper bound}
For each $g \in G$ we have $\mathbf{N}(g,
R) = O(R^2)$. Moreover the implicit constant can be taken to be
independent of $g$. 
\end{proposition}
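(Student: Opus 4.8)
The plan is to bound $\mathbf{N}(g,R)$ by hand, using a lower bound on the angular gaps between distinct points of $gY$ together with the fact that left multiplication by $g$ preserves all the relevant $2\times 2$ determinants.

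First I would normalize. By parts (2) and (4) of Proposition~\ref{prop: simple properties} one may replace $(\Gamma,v,g)$ by $(\mathfrak{s}^{-1}\Gamma\mathfrak{s},\,\mathfrak{s}^{-1}v,\,g\mathfrak{s})$ for any $\mathfrak{s}\in G$ without changing the counting function, and because $Y=\Gamma v$ is discrete the stabilizer $\Gamma_v$ is a maximal unipotent subgroup of $\Gamma$. Choosing $\mathfrak{s}$ appropriately (and rescaling $v$) we may thus assume $v=\ebf_1$ and $\Gamma_v=N_0=\{u_n:n\in\Z\}$, so that $N_0$ is a maximal unipotent subgroup of $\Gamma$. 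The constant obtained below will depend on this normalization, hence on $\Gamma$ and $v$, but not on $g$ --- which is exactly what is claimed.

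The heart of the matter is the identity, valid for $\gamma,\delta\in\Gamma$ and $u=g\gamma\ebf_1,\ u'=g\delta\ebf_1\in gY$,
\[
u\wedge u' \;:=\; u_1u_2'-u_2u_1' \;=\; \det\!\bigl(g\gamma\ebf_1\mid g\delta\ebf_1\bigr) \;=\; \det\!\bigl(\gamma\ebf_1\mid\delta\ebf_1\bigr)\;=\;-\,c_{\delta^{-1}\gamma},
\]
obtained in one line from $\det g=\det\gamma=\det\delta=1$ (here $\gamma\ebf_1,\delta\ebf_1$ are the first columns of $\gamma,\delta$, and $c_\eta$ denotes the lower-left entry of $\eta$). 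Now I invoke Shimizu's lemma: since $\Gamma$ is discrete and contains $N_0$, every $\eta\in\Gamma$ has $|c_\eta|\geq 1$ unless $c_\eta=0$, and $c_\eta=0$ precisely when $\eta\in\operatorname{Stab}_\Gamma(\infty)$, which by maximality of $N_0$ equals $N_0$ or $N_0'=\pm N_0$. Hence $c_{\delta^{-1}\gamma}=0$ iff $\delta^{-1}\gamma\ebf_1=\pm\ebf_1$ iff $u=\pm u'$. This yields two facts: (i) every line through the origin meets $gY$ in at most two points; and (ii) if $u,u'\in gY$ lie on distinct lines through $0$, then $|u\wedge u'|\geq 1$, so the angle $\vartheta\in(0,\pi/2]$ between those lines satisfies $\sin\vartheta = |u\wedge u'|/(\|u\|\,\|u'\|)\geq 1/(\|u\|\,\|u'\|)$.

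Given (i) and (ii) the count is immediate, and I do not expect any genuine obstacle. For $R\geq 1$, the lines through $0$ meeting $gY\cap B(0,R)$ are, as points of $\mathbb{R}\mathrm{P}^1=\mathbb{R}/\pi\Z$, pairwise separated by angle at least $\arcsin(R^{-2})\geq R^{-2}$; thus there are at most $\pi R^2$ of them, and by (i) each carries at most two orbit points, so $\mathbf{N}(g,R)=|B(0,R)\cap gY|\leq 2\pi R^2$, with a constant independent of $g$. The only points that require a little care --- rather than being real difficulties --- are the bookkeeping in the normalization step (that $v=\ebf_1$ and $\Gamma_v=N_0$ can be arranged simultaneously, and that this keeps the estimate uniform in $g$) and the appeal to Shimizu's lemma, which is where discreteness of $\Gamma$ enters essentially.
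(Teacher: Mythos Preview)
Your argument is correct, and it takes a genuinely different route from the paper's.

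The paper normalizes as you do, then passes to the hyperbolic picture: it writes $\gamma = r_\theta a_\gamma u_s$ via Iwasawa, chooses the coset mod $N_0$ so that $u_s$ is bounded, observes that $\|\gamma \ebf_1\|\le R$ forces $\gamma^{-1}\mathbf{i}$ into a fixed-width neighborhood of the ray $\{t\mathbf{i}:t\ge R^{-2}\}$ in $\Hyp$, whose hyperbolic area is $O(R^2)$, and then uses discreteness of $\Gamma\mathbf{i}$ to bound the number of orbit points in that region. As the paper itself notes, this simple version gives an implicit constant that depends on $g$, and it defers the $g$-uniform statement to \cite[Lemma 16.10]{Veech Siegel}.

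Your argument stays entirely in the plane. The key identity $u\wedge u' = -c_{\delta^{-1}\gamma}$, combined with Shimizu's lemma (which is where discreteness of $\Gamma$ enters), gives the dichotomy: either $u=\pm u'$, or the lines through $u$ and $u'$ are separated in $\R\mathrm{P}^1$ by at least $\arcsin(1/\|u\|\|u'\|)$. This immediately bounds the number of such lines meeting $B(0,R)$ by $\pi R^2$, hence $\mathbf{N}(g,R)\le 2\pi R^2$ for $R\ge 1$. The determinant is $g$-invariant, so the bound is uniform in $g$ with no extra work --- in fact you get the explicit constant $2\pi$ after normalization, which the paper's argument does not. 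The price is the appeal to Shimizu's lemma, but that is standard; your approach is more elementary and delivers the full statement directly.
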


\begin{proof} We will give a simple proof in which the implicit
  constant will appear to depend on $g$. For a similar but more careful proof,
  which explains how to take the constant independent of $g$, see \cite[Lemma
  16.10]{Veech Siegel}.  

Make
a change of variables so that $v = \ebf_1$ and $\Gamma_v = N_0$, and
compare the actions on $\R^2$ and $\Hyp$. Let $\gamma v \in \Gamma v$,
and using Iwasawa decomposition, write
$$\gamma = r_\theta a_\gamma u_s.$$
We can choose $\gamma \mod N_0$ so that $u_s$ is bounded, and the
condition $\|\gamma v \| \leq R$ is equivalent to $y_\gamma \leq R$,
where $a_\gamma = \diag{y_\gamma, y^{-1}_\gamma}$.
Now apply $\gamma^{-1}$ to
$\mathbf{i}$. Since $r_\theta$ preserves $\mathbf{i}$, 
$a_\gamma^{-1} \mathbf{i} = \frac{1}{y_\gamma^{2}} \mathbf{i}$, and $u_s$ is
bounded, we see that
$\gamma^{-1}\mathbf{i}$ is contained in a set $A_R$ which is an $r$-neighborhood of
the ray $\left\{t\mathbf{i} : t \geq R^{-2} \right\}$, for some $r>0$ independent
of $R$. The hyperbolic area of $A_R$ is $O(R^2)$. 
On the other hand, since $\Gamma \mathbf{i} \subset \Hyp$ is discrete and since $G$
acts on $\Hyp$ by isometries, there is 
$r>0$ small enough so that the balls of radius $r$ around
points of $\Gamma \mathbf{i}$ are disjoint. So the intersection of $A_R$
with $\Gamma \mathbf{i}$ contains $O(R^2)$ points.
\end{proof}

\begin{corollary}\name{cor: convergence}
\combarak{I don't know why latex typesets this in roman rather than
  italicized. When I change it to a proposition, it works well, but I
  don't see any difference in the declarations.}
 The quantities in \equ{eq:
  eisenstein 2}, \equ{eq:
  eisenstein} and \equ{eq: integration by parts} converge absolutely on
$\{ \re(s)>1\}$ and converge 
uniformly on compact subsets of $\{ \re(s) > 1\}$. For any $\zeta >1$,
\equ{eq: eisenstein 2} and 
\equ{eq: eisenstein} are bounded on $\{ \re(s) \geq \zeta\}$ by a bound
which can be taken to be uniform as $z$ and $g$ vary in a compact
set. 
For fixed $g$, and for $\sigma>1$, 
\eq{eq: eisenstein mellin 2}{
\frac{\mathbf{N}(g, R^+)+\mathbf{N}(g, R^-)}{2} = \frac{1}{2\pi} \lim_{T \to \infty}
\int_{- T}^T \frac{E(g, \sigma + \mathbf{i} 
  t)}{\sigma + \mathbf{i} t} R^{2(\sigma + \mathbf{i}t)} \, dt,
}
where $\mathbf{N}(g, R^+), \mathbf{N}(g, R^-)$ denote the one-sided limits of $\mathbf{N}(g,R)$ as $x \to
R$. 
\end{corollary}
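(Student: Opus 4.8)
The plan is to deduce the whole corollary from the uniform quadratic bound $\mathbf{N}(g,R)=O(R^2)$ of Proposition~\ref{prop: quadratic upper bound}, by an Abel summation (integration by parts) argument. First I would record the elementary fact that, since $v\neq0$ and every $g\in G$ is invertible, $0\notin g\Gamma v$, and since $g\Gamma v$ is the homeomorphic image of the discrete closed set $Y=\Gamma v$, the quantity $\delta(g):=\inf_{u\in g\Gamma v}\|u\|$ is strictly positive and stays bounded below by a fixed positive constant as $g$ runs over a compact subset of $G$. Writing $\sigma=\re(s)$, for $\sigma>1$ the sum of the moduli of the terms of \equ{eq: eisenstein} is
$$\sum_{u\in g\Gamma v}\|u\|^{-2\sigma}=\int_{\delta(g)}^{\infty}R^{-2\sigma}\,d\mathbf{N}(g,R)=2\sigma\int_{\delta(g)}^{\infty}\mathbf{N}(g,R)\,R^{-2\sigma-1}\,dR,$$
the boundary term at infinity vanishing because $\mathbf{N}(g,R)=O(R^2)$ and $\sigma>1$; inserting the bound of Proposition~\ref{prop: quadratic upper bound} (with implicit constant $C$), this is $\leq 2\sigma C\int_{\delta(g)}^{\infty}R^{1-2\sigma}\,dR=\frac{\sigma C\,\delta(g)^{2-2\sigma}}{\sigma-1}<\infty$. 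The same estimate applied to \equ{eq: integration by parts} gives its absolute convergence, and applied to the tail $\int_{R_0}^{\infty}$ it shows the tails tend to $0$ as $R_0\to\infty$ uniformly on compact subsets of $\{\re(s)>1\}$; hence the convergence in \equ{eq: eisenstein} and \equ{eq: integration by parts} is locally uniform on $\{\re(s)>1\}$, and the limits are holomorphic there, being locally uniform limits of entire functions. The series \equ{eq: eisenstein 2} is, up to the constant $1$ or $2$ and the substitution $z=g^{-1}\mathbf{i}$, one of the series \equ{eq: eisenstein} by \eqref{eq:eisensteinagrees}, so it inherits all of these properties. Finally, on $\{\re(s)\geq\zeta\}$ one has $|E(g,s)|\leq E(g,\re(s))$, which the displayed estimate bounds in terms of $\zeta$ and a positive lower bound for $\delta(\cdot)$ over the compact set in play; the same applies to \equ{eq: eisenstein 2} through \eqref{eq:eisensteinagrees}.

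To justify the Mellin inversion identity \equ{eq: eisenstein mellin 2} I would run the classical Perron-type argument (as in \cite{Davenport}). Fix $g$ and $\sigma>1$ and put $x_u=(R/\|u\|)^2$ for $u\in g\Gamma v$. The absolute convergence established above lets one interchange the finite integral $\int_{\sigma-\mathbf{i}T}^{\sigma+\mathbf{i}T}$ with the sum over $u$, so that
$$\frac{1}{2\pi\mathbf{i}}\int_{\sigma-\mathbf{i}T}^{\sigma+\mathbf{i}T}\frac{E(g,s)}{s}\,R^{2s}\,ds=\sum_{u\in g\Gamma v}\frac{1}{2\pi\mathbf{i}}\int_{\sigma-\mathbf{i}T}^{\sigma+\mathbf{i}T}\frac{x_u^{\,s}}{s}\,ds.$$
Next I would insert the standard discontinuous-integral estimates: for $x\neq1$, $\bigl|\frac{1}{2\pi\mathbf{i}}\int_{\sigma-\mathbf{i}T}^{\sigma+\mathbf{i}T}\frac{x^{s}}{s}\,ds-\mathbbm{1}_{\{x>1\}}\bigr|\leq x^{\sigma}\min\!\bigl(1,\tfrac{1}{T\,|\log x|}\bigr)$, while for $x=1$ the integral differs from $\tfrac12$ by at most $\tfrac{\sigma}{\pi T}$. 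Since $g\Gamma v$ is discrete, for fixed $R$ the values $\|u\|\neq R$ stay a definite multiplicative distance from $R$, so $\eta:=\inf_{\|u\|\neq R}|\log x_u|>0$; hence the error from the terms with $\|u\|\neq R$ is at most $\frac{1}{\eta T}\sum_u x_u^{\,\sigma}=\frac{R^{2\sigma}}{\eta T}E(g,\sigma)\to0$, and the error from the finitely many terms with $\|u\|=R$ is at most $\frac{\sigma}{\pi T}\bigl(\mathbf{N}(g,R^{+})-\mathbf{N}(g,R^{-})\bigr)\to0$. Letting $T\to\infty$, the right-hand side converges to the number of $u\in g\Gamma v$ with $\|u\|<R$ plus half the number of those with $\|u\|=R$, that is, to $\tfrac12\bigl(\mathbf{N}(g,R^{+})+\mathbf{N}(g,R^{-})\bigr)$; rewriting $\frac{1}{2\pi\mathbf{i}}\int_{\sigma-\mathbf{i}T}^{\sigma+\mathbf{i}T}$ as $\frac{1}{2\pi}\int_{-T}^{T}$ then yields \equ{eq: eisenstein mellin 2}.

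The one step that genuinely needs care is this passage $T\to\infty$ under the summation sign: the naive tail bound degrades for $\|u\|$ near $R$, so one must split into the three regimes $\|u\|<R$, $\|u\|=R$, $\|u\|>R$, apply the appropriate discontinuous-integral estimate in each, and use discreteness of the orbit to extract the positive constant $\eta$ controlling the off-diagonal contribution. Everything else — absolute and locally uniform convergence, holomorphy, boundedness for $\re(s)\geq\zeta$, and the transcription of \equ{eq: eisenstein 2} via \eqref{eq:eisensteinagrees} — is a routine consequence of Proposition~\ref{prop: quadratic upper bound} together with Abel summation.
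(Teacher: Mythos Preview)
Your proof is correct. For the convergence and boundedness claims, your Abel summation argument is essentially interchangeable with the paper's dyadic-ring decomposition; both simply extract a convergent majorant from $\mathbf{N}(g,R)=O(R^2)$.

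For the inversion identity \equ{eq: eisenstein mellin 2} you take a genuinely different route. The paper reduces the statement to classical Fourier inversion: it introduces the auxiliary function $\psi_{\sigma_0}(\tau)=\mathbf{N}(g,e^\tau)e^{-\sigma_0\tau}$, checks via the quadratic bound that $\psi_{\sigma_0}\in L^1\cap L^2$, identifies its Fourier transform with $E(g,s)/(2s)$ under the change of variables $R=e^\tau$, $2s=\sigma_0+2\pi\mathbf{i}u$, and then invokes Fourier inversion in the form that returns the average of one-sided limits at jump discontinuities. You instead run a Perron-type argument directly on the series: interchange the truncated integral with the sum by absolute convergence, apply the standard discontinuous-integral estimates for $\tfrac{1}{2\pi\mathbf{i}}\int_{\sigma-\mathbf{i}T}^{\sigma+\mathbf{i}T}\tfrac{x^s}{s}\,ds$, and exploit discreteness of $g\Gamma v$ to show the accumulated error tends to $0$ as $T\to\infty$. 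Your approach is self-contained and avoids appealing to a Fourier inversion theorem for functions with jumps; the paper's approach is shorter to write and makes the Mellin--Fourier dictionary explicit. Both are standard devices in analytic number theory and lead to the same conclusion with comparable effort.
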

\begin{proof}
The claim regarding \equ{eq:
  eisenstein 2}, \equ{eq:
  eisenstein} and \equ{eq: integration by parts} follows easily from
Proposition \ref{prop: quadratic upper bound}. For instance, for 
\equ{eq: eisenstein}, split the sum into sums over the `rings' 
$$\left\{ w \in \Gamma v :
\| w \| \in \left[2^n, 2^{n+1} \right] \right\}$$ 
for $n \in \N$. 

For \equ{eq: eisenstein mellin 2}, fix $g \in G$ and $\sigma_0>2$, and
define the function $\psi_{\sigma_0}(\tau) = \mathbf{N}(g, e^{\tau})e^{-\sigma_0 \tau}.$
Then $\psi_{\sigma_0}(\tau)$ has finitely many discontinuities on every
bounded interval, with well-defined one-sided limits, and vanishes
when we take 
$\tau \to -\infty$ (by discreteness of $Y$). Also, by Proposition \ref{prop: quadratic upper
  bound}, we have $\psi_{\sigma_0}(\tau) = O(e^{(2-\sigma_0)\tau})$ as $\tau \to
\infty$, and hence $\psi_{\sigma_0} \in L^2(\R) \cap L^1(\R)$. Write 
$$
\widehat{\psi_{\sigma_0}}(u) = \int_{-\infty}^\infty
\psi_{\sigma_0}(\tau)e^{-2\pi \mathbf{i} u\tau} d\tau 
$$
for the Fourier transform of $\psi_{\sigma_0}$. Using \equ{eq:
  integration by parts} and making changes of variables $R= e^\tau,\, 2s
= \sigma_0+ 2\pi \mathbf{i} u$ we have 
$$
\widehat{\psi_{\sigma_0}}(u) = \frac{E(g,s)}{2s}, \ \text{where }
\re(s) > 1.
$$
Then by
Fourier inversion (see e.g.\ \cite[Ex. 1.2.7]{Terras}), for all $\tau
\in \R$ we have  
$$
\frac{\psi^+_{\sigma_0}(\tau)+ \psi^-_{\sigma_0}(\tau)}{2}= \lim_{T\to \infty}
\int_{-T}^T \widehat{\psi_{\sigma_0}}(u) e^{2\pi \mathbf{i} u\tau}du,  
$$
and hence (with the changes of variables $\sigma = \sigma_0/2, \, t =
\pi u, \, R=e^\tau, \, s = \sigma+ \mathbf{i} t$)
\[
\begin{split}
& \frac{\mathbf{N}(g, R^+)+\mathbf{N}(g, R^-)}{2}   = R^{\sigma_0}
\frac{\psi^+_{\sigma_0}(\log R)+
  \psi^-_{\sigma_0}(\log R)}{2} \\ = & \lim_{T\to \infty}
\int_{-T}^T R^{\sigma_0} \frac{E(g,\sigma  + \pi \mathbf{i}
  u)}{\sigma_0 + 2 \pi \mathbf{i}u}  R^{2\pi \mathbf{i} u}du  
= \frac{1}{2\pi} \lim_{T\to \infty} \int_{-T}^T \frac{E(g, \sigma+
  \mathbf{i}t)}{\sigma+ \mathbf{i}t}
R^{2(\sigma+\mathbf{i}t)} dt.
\end{split}
\]
\end{proof}

\subsection{Selberg's results: meromorphic continuation and functional
equation} \name{subsec: Selberg results}
We now move beyond elementary results and come to much deeper results
about Eisenstein series. Most of 
these results are due to celebrated 
work of Selberg, see \cite{Selberg Indian, Selberg gottingen,
  Hejhal, Kubota} (the
introduction to \cite{Selberg gottingen} contains some historical
notes). The proofs exploit the dependence of $E(z,s)$ on the variable
$z$, and we content ourselves with two comments, in order to clarify the
connection with objects appearing in the preceding sections.

For $s \in \C$, the functions $f(x + \mathbf{i}y) = y^s$ clearly
satisfy $\Delta f = s(1-s) f$, i.e.\ are eigenfunctions for the Laplace-Beltrami
operator. Since $\Delta$ is $G$-invariant, formula \equ{eq:
  eisenstein} shows that for fixed $s$, the Eisenstein series also
gives rise (at least formally) to a Laplace-Beltrami eigenvector $z
\mapsto E_i(z,s)$, thus furnishing a connection between the Eisenstein
series and the representation theory of $G$. Similarly, the functions
$g \mapsto E_i(g,s)_n$ defined in \equ{eq: twisted eisenstein 2} are eigenfunctions
for the Casimir operator on $G$.

Also recall our
normalization sending a cusp of $\Gamma$ 
to $\infty$ so that the stabilizer group becomes $N_0$. If $\Gamma$
has one cusp then this means
that $z \mapsto E(z,s)$ has a 
periodicity property $E(z,s) = E(u_1z, s) = E(z+1, s)$. We can exploit
this periodicity by 
developing $E(z,s) = E(x+\mathbf{i}y,s)$ in a Fourier series $\sum_m
a_m(y,s) e^{2\pi \mathbf{i}mx}.$ Furthermore, if $\Gamma$ has more
than one cusp and $i, j$ represent two of them, 
then $z \mapsto E_i(\mathfrak{s}_j z, s) = \sum_m
a_{i,j, m}(y,s) e^{2\pi \mathbf{i}mx}$ is also 1-periodic, and this leads
to interesting relations between the functions $a_{i,j, m}$.

We now turn to Selberg's results. By Corollary \ref{cor: convergence},
as an absolutely convergent series of holomorphic functions, the
functions $s \mapsto E_i(g, s)$ are holomorphic on $\{\re(s) >
1\}$. A fundamental issue is to extend the functions to the entire
plane, and here we have: 

\begin{theorem}[Selberg c. 1953]\name{thm: analytic continuation}
The functions $s \mapsto E_i(z,s)$ have a meromorphic continuation to the 
complex plane. There is a pole at $s=1$ with residue
$\frac{1}{\mathrm{covol}(\Gamma)}$, and all 
other poles with $\re(s) \geq \frac12$ are contained in
$\left(\frac12,1\right]$ (in particular there are no poles at $s =
1/2$).  All poles are simple. 
\end{theorem}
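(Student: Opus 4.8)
The plan is to establish all of the assertions simultaneously by tying $E_i(\cdot,s)$ to the resolvent of $\Delta$ on $X_\Gamma$, which is the route taken by Selberg and in the references cited above. First I would separate off the part of the Eisenstein series that fails to be square--integrable. Fix a smooth function $\chi\colon(0,\infty)\to[0,1]$ with $\chi\equiv0$ on $(0,1]$ and $\chi\equiv1$ on $[2,\infty)$, and let $h_s(z)$ be the \emph{incomplete Eisenstein series} obtained by inserting the factor $\chi(\im(\gamma'z))$ into each summand of \equ{eq: eisenstein 2}. For each fixed $z$ this is a finite sum (only finitely many cosets contribute a point of height $\ge1$), so $s\mapsto h_s(z)$ is entire, and high in the $i$-th cusp one has $h_s=\chi(y)\,y^s$ exactly. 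Since $y^s$ is a $\Delta$--eigenfunction of eigenvalue $s(1-s)$ and $\chi$ varies only on a compact collar, $g_s:=(\Delta-s(1-s))h_s$ lies in $C_c^\infty(X_\Gamma)$ and $s\mapsto g_s$ is an entire $C_c^\infty(X_\Gamma)$--valued map. For $\re(s)>1$, where Corollary \ref{cor: convergence} guarantees convergence, $E_i(\cdot,s)-h_s$ is a $\Delta$--eigenfunction of eigenvalue $s(1-s)$ lying in $L^2(X_\Gamma,\mu_\Gamma)$ (its constant term in each cusp is $O(y^{1-\re s})$), so
\[
E_i(\cdot,s)-h_s=-R(s)\,g_s,\qquad R(s):=\big(\Delta-s(1-s)\big)^{-1}.
\]

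Second, I would read off the pole structure on $\{\re s>\tfrac12\}$ from the spectral theorem for $\Delta$. The operator $\Delta$ on $X_\Gamma$ is self--adjoint, nonnegative, with $\operatorname{spec}(\Delta)=\{0=\lambda_0<\lambda_1\le\cdots\le\lambda_N<\tfrac14\}\cup[\tfrac14,\infty)$, the part below $\tfrac14$ being finite. A direct check shows that for $\re(s)>\tfrac12$ the number $s(1-s)$ is non--real unless $s$ is real, in which case $s(1-s)\le\tfrac14$ with equality only at $s=\tfrac12$; hence $s(1-s)\notin\operatorname{spec}(\Delta)$ throughout $\{\re s>\tfrac12\}$ except at the finitely many points $s_j\in(\tfrac12,1]$ with $s_j(1-s_j)=\lambda_j$. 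Thus $R(s)$ is a holomorphic operator--valued function on $\{\re s>\tfrac12\}\setminus\{s_0,\dots,s_N\}$ with \emph{simple} poles at the $s_j$ --- simple because $s\mapsto s(1-s)$ is a local isomorphism at each $s_j\neq\tfrac12$ while $\lambda\mapsto(\Delta-\lambda)^{-1}$ has a simple pole at every eigenvalue. Consequently $E_i(\cdot,s)=h_s-R(s)g_s$ extends meromorphically to $\{\re s>\tfrac12\}$ with only simple poles, all contained in $(\tfrac12,1]$. At $s=1$ we have $\lambda_0=0$, with eigenspace spanned by the constant function $\mu_\Gamma(X_\Gamma)^{-1/2}$, so the principal part of $-R(s)g_s$ there is $\tfrac{\langle g_s,1\rangle}{s(1-s)\,\mathrm{covol}(\Gamma)}$; since $h_s$ is regular at $s=1$ and $\langle g_1,1\rangle=\int_{X_\Gamma}\Delta h_1\,d\mu_\Gamma=-1$ (a one--variable computation: integrate $\Delta(\chi(y)y)$ against $dx\,dy/y^2$ over the collar, or apply Green's formula on a truncation), the residue at $s=1$ equals $\tfrac{1}{\mathrm{covol}(\Gamma)}$, as claimed.

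Third, I would push the continuation across the line $\re s=\tfrac12$ and onto all of $\C$. Because $g_s\in C_c^\infty(X_\Gamma)$ there is no need to keep $R(s)g_s$ in $L^2$; instead one builds a parametrix for $\Delta-s(1-s)$ by gluing the resolvent on a compact core of $X_\Gamma$ to the \emph{explicit} resolvent of the model operator $-y^2\partial_y^2$ on each cuspidal half--cylinder, getting $R(s)=Q(s)\big(I+K(s)\big)^{-1}$ with $Q(s)$ an explicit meromorphic operator and $K(s)$ a holomorphic family of compact operators on a suitable space; the analytic Fredholm theorem then continues $(I+K(s))^{-1}$, hence $R(s)g_s$, hence $E_i(\cdot,s)$, meromorphically to $\C$, and the same construction yields the functional equation relating $s$ to $1-s$ through the scattering matrix, from which simplicity of the remaining poles (those with $\re s<\tfrac12$) follows. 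The one genuinely delicate point --- the heart of Selberg's argument --- is the absence of a pole at $s=\tfrac12$ itself: the $L^2$--resolvent $(\Delta-\lambda)^{-1}$ has a square--root branch point at the bottom $\lambda=\tfrac14$ of the continuous spectrum, but the substitution $\lambda=s(1-s)=\tfrac14-(s-\tfrac12)^2$ is two--to--one near $s=\tfrac12$ and unfolds that branch, so $E_i(\cdot,s)$ is single--valued there, and one rules out an actual pole using the nonnegativity of $\Delta$ (equivalently, unitarity of the scattering matrix at $s=\tfrac12$). A classically equivalent implementation of this last step replaces the parametrix by Selberg's Fredholm integral equation for the truncated Eisenstein series together with the Maass--Selberg relations; either way, this part --- not the location of the poles in $(\tfrac12,1]$ or the residue at $1$, which are comparatively soft --- is where the real work lies.
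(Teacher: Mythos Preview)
The paper does not prove this theorem at all: it is stated as a result of Selberg and the surrounding text explicitly says ``we content ourselves with two comments'', referring the reader to \cite{Selberg Indian, Selberg gottingen, Hejhal, Kubota} for proofs. So there is no ``paper's own proof'' to compare against.

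That said, your sketch is a legitimate outline of one of the standard modern routes to Selberg's theorem, the resolvent approach usually attributed in this form to Colin de Verdi\`ere. The decomposition $E_i = h_s - R(s)g_s$ with $h_s$ an incomplete Eisenstein series and $g_s\in C_c^\infty(X_\Gamma)$ is correct, and your reading of the pole structure on $\{\re s>\tfrac12\}$ from the self-adjoint spectral theory of $\Delta$ is exactly right (one small inaccuracy: you write the spectrum as a finite set below $\tfrac14$ together with $[\tfrac14,\infty)$, omitting possible embedded cuspidal eigenvalues $\ge \tfrac14$; this is harmless here since for $\re s>\tfrac12$ the value $s(1-s)$ never meets $[\tfrac14,\infty)$ anyway). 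Your residue calculation at $s=1$ via $\langle g_1,1\rangle=-1$ is also correct once cusp widths are normalized. The third step is, as you acknowledge, where the real content lies; both the parametrix/analytic Fredholm route and Selberg's original truncation plus Maass--Selberg route are valid, and your description of the unfolding of the branch point at $s=\tfrac12$ is the right intuition. In short: your proposal is sound, and simply goes far beyond what the paper itself attempts, since the paper treats this theorem as a black box.
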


The second basic result is an analytic continuation
according to which one may recover the values of $E_i(g,\cdot)$ at $s$ from the
values at $1-s$. To state this we use the notation introduced after
Definition \ref{def: tempered}, and let 
$\mathbf{\Gamma}$ denote the classical $\Gamma$-function. For each $1 \leq
i,j \leq k$ let 
\eq{eq: constant term}{
\varphi_{ij} (s) = \sqrt{\pi} \, \frac{\mathbf{\Gamma}\left( s - \frac12
  \right)}{\mathbf{\Gamma}(s)}\sum |c|^{-2s}, \ \ \ \Phi(s) =
\left(\varphi_{ij}(s) \right)_{i,j=1}^k 
}
where the sum ranges over distinct representatives
$\left(\begin{matrix} a & b \\ c & d\end{matrix} \right)$ of double
cosets  
in $N'_0 \backslash \mathfrak{s}_i^{-1} \Gamma \mathfrak{s}_j/N'_0$ with $c \neq
0$.
The function $\varphi_{ij}$ has another definition in terms of the constant term
in the Fourier expansion of $z
\mapsto E_i(\mathfrak{s}_jz,s)$, see \cite[\S2.2]{Kubota}. 
The matrix  $\Phi(s)$ is sometimes called the {\em
  constant term matrix} corresponding to $\Gamma$, and sometimes
called the {\em scattering matrix}. The poles of $s
\mapsto E_i(z, s)$ with $\re(s) \geq 1/2$ are also poles of $\Phi$.  \combarak{Please
  check there are no mistakes in \equ{eq: constant term}. I took it from
  Kubota, but I needed to manipulate.}

\begin{theorem}[Selberg c. 1953]\name{thm: functional equation}
The matrix valued function $\Phi$ satisfies 
$$
\Phi(s)\Phi(1-s) =\mathrm{Id},
$$
and the column vector $\EE(z,s) = (E_1, \ldots, E_k)$ satisfies 
$$
\EE(z,s) = \Phi(s) \EE(z, 1-s).
$$
\end{theorem}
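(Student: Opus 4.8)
The plan is to deduce both assertions from the meromorphic continuation already granted in Theorem~\ref{thm: analytic continuation}, the explicit shape of the constant terms, and a soft self-adjointness argument. I will take for granted (this is part of what lies behind Theorem~\ref{thm: analytic continuation}; see \cite[\S2.2]{Kubota}) that for $s$ away from poles and each cusp $\ell$, the $0$-th Fourier coefficient of $z\mapsto E_i(\mathfrak{s}_\ell z,s)$ equals exactly $\delta_{i\ell}\,y^{s}+\varphi_{i\ell}(s)\,y^{1-s}$, with $\varphi_{i\ell}$ as in \equ{eq: constant term}, while every nonzero Fourier mode is a multiple of $\sqrt{y}\,K_{s-\frac12}(2\pi|m|y)$ and hence decays exponentially as $y\to\infty$, uniformly for $x$ in a period. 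I also use that the entries of $\Phi$, being constant-term coefficients of the meromorphically continued $\EE$, are meromorphic on $\C$, with poles forming a discrete set.

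First I would prove $\EE(z,s)=\Phi(s)\EE(z,1-s)$. Fix $s$ with $\re(s)<\tfrac12$ and $\im(s)\neq0$, avoiding the discrete set of poles of the entries of $\EE(\cdot,s)$, $\EE(\cdot,1-s)$ and $\Phi(s)$; this is legitimate since $\re(1-s)>\tfrac12$ and $1-s$ is non-real, so $1-s$ is none of the poles in $(\tfrac12,1]$ permitted by Theorem~\ref{thm: analytic continuation}. Put $G(z,s)=\EE(z,s)-\Phi(s)\EE(z,1-s)$, with $i$-th component $G_i(z,s)=E_i(z,s)-\sum_k\varphi_{ik}(s)E_k(z,1-s)$. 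Since $z\mapsto E_k(z,w)$ is a $\Delta$-eigenfunction with eigenvalue $w(1-w)$ and $s(1-s)=(1-s)s$, each $G_i(\cdot,s)$ is smooth (real-analytic) on $X_\Gamma$ and satisfies $\Delta G_i=s(1-s)G_i$. Computing with the constant terms above, the $y^{1-s}$ contributions cancel in every cusp, so the $0$-th Fourier coefficient of $G_i$ at cusp $\ell$ is $\big(\delta_{i\ell}-(\Phi(s)\Phi(1-s))_{i\ell}\big)y^{s}$; with the exponential decay of the higher modes this yields $G_i=O(y^{\re(s)})$ in each cusp. As $\re(s)<\tfrac12$, the function $y^{\re(s)}$ is square-integrable near each cusp for $\tfrac{dx\,dy}{y^2}$, so $G_i(\cdot,s)\in L^2(X_\Gamma,\mu_\Gamma)$. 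But $\Delta$ is essentially self-adjoint and non-negative on $L^2(X_\Gamma,\mu_\Gamma)$, hence has real spectrum, whereas $s(1-s)$ has imaginary part $\im(s)\bigl(1-2\re(s)\bigr)\neq0$; thus $s(1-s)$ is not an eigenvalue and $G_i(\cdot,s)\equiv0$. This holds on a nonempty open set of $s$, so by meromorphic continuation $\EE(z,s)=\Phi(s)\EE(z,1-s)$ identically.

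Next, for $\Phi(s)\Phi(1-s)=\mathrm{Id}$ I would apply the identity just obtained twice: $\EE(z,s)=\Phi(s)\EE(z,1-s)=\Phi(s)\Phi(1-s)\EE(z,s)$, so $\bigl(\Phi(s)\Phi(1-s)-\mathrm{Id}\bigr)\EE(z,s)=0$ for every $z$. For $s\neq\tfrac12$ away from poles, the functions $z\mapsto E_1(z,s),\dots,E_k(z,s)$ are linearly independent over $\C$: in the $0$-th Fourier coefficient of $\sum_i c_iE_i(z,s)$ at cusp $j$ the coefficient of $y^{s}$ is $c_j$ (here $s\neq\tfrac12$ ensures $y^s\neq y^{1-s}$), so $\sum_i c_iE_i\equiv0$ forces all $c_j=0$. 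Hence no nonzero constant matrix can annihilate the vector $\EE(\cdot,s)$, which forces $\Phi(s)\Phi(1-s)=\mathrm{Id}$ for $s\neq\tfrac12$ off the poles, and then everywhere by meromorphy.

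The deduction above is essentially formal; the real weight of the theorem lies in the inputs I have assumed: the precise identification of the constant term with $\delta_{i\ell}y^{s}+\varphi_{i\ell}(s)y^{1-s}$ for the particular $\varphi_{i\ell}$ of \equ{eq: constant term}, valid after the continuation, and the exponentially decaying tail of the remaining Fourier modes with uniformity in $x$. For $\re(s)>1$ these are obtained by unfolding the defining series \equ{eq: eisenstein 2} and a standard Poisson-summation and $K$-Bessel computation; pushing them past $\re(s)=1$ is exactly the substance of Selberg's continuation theorem, which we do not reprove. An alternative, more classical route to $\Phi(s)\Phi(1-s)=\mathrm{Id}$ (and to the symmetry $\Phi(s)^{t}=\Phi(s)$, which the statement does not require) runs through the Maass--Selberg relations for Eisenstein series truncated at height $T$, equating the $L^2$ inner product of two such truncations with an explicit bilinear expression in the constant-term data and reading off the unitarity relation from the $T$-independence of the left-hand side; I would mention this but not carry it out.
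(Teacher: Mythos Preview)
The paper does not actually give its own proof of this theorem: it is stated in \S\ref{subsec: Selberg results} as a classical result of Selberg and referenced to \cite{Selberg Indian, Selberg gottingen, Hejhal, Kubota}, with only a couple of motivating remarks about why the constant-term matrix and the periodicity in $z$ enter the picture. So there is nothing to compare against beyond those references.

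Your argument is the standard textbook deduction (essentially the one in \cite{Kubota, Iwaniec}): subtract $\Phi(s)\EE(z,1-s)$ from $\EE(z,s)$, observe via the constant-term formula that the difference has at worst $y^{\re(s)}$ growth at every cusp, hence lies in $L^2$ for $\re(s)<\tfrac12$, and invoke self-adjointness of $\Delta$ to kill it since $s(1-s)\notin\R$; then bootstrap to $\Phi(s)\Phi(1-s)=\mathrm{Id}$ via linear independence of the $E_i$ read off from their constant terms. The computation of the constant term of $G_i$ and the $L^2$ check are correct, and you are right to flag that the genuine content --- the meromorphic continuation together with the identification of the constant term as exactly $\delta_{i\ell}y^s+\varphi_{i\ell}(s)y^{1-s}$ past the line $\re(s)=1$ --- is being imported rather than proved. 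One small technical point you glide over: to conclude $G_i=0$ from $\Delta G_i=s(1-s)G_i$ and $G_i\in L^2$, you need $G_i$ in the domain of the self-adjoint realization of $\Delta$; this follows because $G_i$ is smooth and both $G_i$ and $\Delta G_i$ are in $L^2$, but it is worth a word. Otherwise your sketch is correct and matches what the cited sources do.
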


\subsection{Main term asymptotics and quadratic constant}\name{subsec:
  main term} 
As Veech noted, it is well-known to number-theorists that
the existence of a meromorphic continuation with a 
simple pole at $s=1$, already 
implies Theorem \ref{thm: Veech 89}, part (c). To see this, recall
the Wiener-Ikehara Tauberian theorem
(see e.g. \cite[Theorem 17]{Widder}), which was developed in
order to simplify proofs of the prime number theorem, and states: 

\medskip

{\em 
Suppose $\psi: \R_+ \to \R_+$ is monotone non-decreasing, $A \in
\R$, and suppose the integral $\int_0^\infty e^{-st} \psi(t)dt,$ where $s = \sigma +
\mathbf{i}\tau$, converges for $\sigma>1$ to a function $f(s)$ which
satisfies that 
$\lim_{\sigma \to 1+} \left( f(s) - \frac{A}{s-1} \right)$ exists, converges
uniformly, and
defines a uniformly bounded function in every interval $\tau \in [-a,
a]$, for all $a>0$. Then $\frac{\psi(t)}{e^t} \to_{t \to \infty} A.$ 
}

\medskip

To obtain part (c) of Theorem \ref{thm: Veech 89}, suppose $\Gamma v$
is a discrete orbit for a nonuniform lattice $\Gamma$ corresponding
the $i$-th cusp of $\Gamma$, and apply the
Wiener-Ikehara theorem with $A$ the residue of $E_i(g,s)$ at $s=1$, and
$\psi(t) = \mathbf{N}(g, e^{t/2})$. The 
hypotheses of the Wiener Ikehara theorem are justified by \equ{eq:
  integration by parts}, a change of variables $R = e^{t/2}$, and
Theorem \ref{thm: analytic continuation}. Here 
\eq{eq: def c0}{
A = \left\{ \begin{matrix} 
\frac{1}{\mathrm{covol}(\Gamma)} & \Gamma \text{ does not contain
  -}\mathrm{Id} \\  \frac{2}{ \mathrm{covol}(\Gamma)} & \text{otherwise} \end{matrix}
\right. }
will be the quadratic growth constant $c_{\Gamma v}$ 
provided $v$ satisfies $v = \mathfrak{s}_i \ebf_1$.  

\medskip 

{\em Warning (continued):} In \cite{Veech Eisenstein} the quadratic
growth constant is given as $\frac{1}{\mathrm{covol}(\Gamma)}$, but the
groups he considers do contain -$\mathrm{Id}$. The discrepancy is due
to the fact that Veech only counts closed cylinders and saddle
connections on surfaces, and each of these gives rise to two holonomy
vectors, depending on orientation. 

\medskip

Veech was not content with deriving Theorem \ref{thm: Veech 89}(c)
from known results about Eisenstein series. In 1998 
he reversed the logic, reproving the result using
ergodic-theoretic ideas introduced in \cite{EM}, and using this, 
obtained a continuation result for $E(g,s)$. Namely he showed that the
limit $\lim_{s\to 1} (s-1)E(g,s)$ exists 
along any sequence approaching $s=1$ nontangentially from $\{z \in \C:
\re(z)>1\}$, and used this to provide an alternative derivation of the
formula \equ{eq: def c0} 
for the quadratic growth constant. See \cite[\S
16]{Veech Siegel} for more details.  

\subsection{$\Theta$-transform}\name{subsec: theta transform} 
Let $\Gamma$ be a non-uniform lattice in $G$, and $\Gamma v $ a
discrete orbit in the plane. We will assume throughout this section
that $v$ corresponds to the $i$-th cusp of $\Gamma$ and is normalized
so that $ v = \mathfrak{s}_i \ebf_1$. Putting different weights on different
points on the plane amounts to choosing $f : \R^2 \to \C$, and
defining 
$$
\Theta_f : G/\Gamma \to \C, \ \ \Theta_f (g\Gamma ) = \sum_{u \in
  g\Gamma v} f(u). 
$$
We will refer to the map $f \mapsto \Theta_f$ as the {$\Theta$-transform.}
Note that this definition extends \equ{eq: eisenstein}, in that
$g \mapsto E^{(\Gamma, v)}(g,s) = \Theta_{f}(g)$ for $f(u) =
\|u\|^{-2s}.$ 
As before we need to worry about convergence issues, and we will
assume for the moment that $f$ has compact
support contained in $\R^2 \sm \{0\}$. Note that this is not satisfied
for \equ{eq: eisenstein} and it will make the $\Theta$-transforms we
consider easier to handle analytically. This will already be apparent
in the following proposition, in which we discuss the
$\Theta$-transform of smooth functions which have a special form. 

%We say that $f : \R^2 \sm \{0\} \to \C$ {\em splits into
%  angular and radial parts} if we can write 
%$$f(r e^{\mathbf{i}\theta})
%= \psi(r) \rho(\theta), \text{ where } \psi: \R_+ \to \C \text{ and }
%\rho: \R/2\pi\Z \to \C. $$ 
%Such a function is smooth if and only if
%$\psi$ and $\rho$ are, and is compactly supported if and
%only if $\psi$ is. Also write
Write 
$h_R(x) = h\left(\frac{x}{R}\right)$. %, so that $f_R(r e^{
%  \mathbf{i}\theta}) = \psi \left(\frac{r}{R} \right)
%\rho(\theta)$. 
With this notation we have the following extension of
\equ{eq: eisenstein mellin}:    

\begin{proposition}\name{prop: Lemma 1.6}
Let $f: \R^2 \sm \{0\}\to \C$ be a smooth compactly supported
function, let $\rho: \R/2\pi \Z \to \R_+$ be smooth,
let $\psi: \R_+ \to \C$ be smooth and compactly supported, and let 
$\Psi=\mathcal{M}\psi $ be the Mellin transform of $\psi$ as in
\equ{eq: Mellin transform}. Let $\sigma>2$ and denote by $\Theta$ the
transform associated with the 
orbit $\Gamma v$ corresponding to the $i$-th cusp of
$\Gamma$, normalized so that $v = \mathfrak{s}_i \ebf_1$. 
 Then:
\begin{enumerate}
\item 
If $f (u) = \psi(\|u\|)$ is purely radial,
then 
\eq{eq: eisenstein mellin 3}{
\Theta_{f_R} (g\Gamma) = \frac{1}{2\pi \mathbf{i}} \int_{\re(s) = \sigma}
\Psi(s) E_i\left(g, \frac{s}{2} \right) R^s ds.
}

\item
Suppose 
$f\left(re^{\mathbf{i}\theta} \right) = \psi \left(\frac{r}{\rho(\theta)}
\right)$, 
and $\rho(\theta)^s = \sum_{n \in \Z} \hat{\rho}_{n}(s)
e^{\mathbf{i}n\theta}$ is the Fourier expansion of $\rho^s$. Then 
\eq{eq: twisted eisenstein mellin 4}{
\Theta_{f_{R}}(g\Gamma) = \frac{1}{2\pi \mathbf{i}} \sum_{n\in \Z}
\int_{\re(s)=\sigma} \Psi(s) \hat{\rho}_{-n}(s)  E_i\left( g,
  \frac{s}{2} \right)_{n} R^s ds.
}
\item
Suppose $f$ splits into angular and radial parts as 
$f\left(re^{\mathbf{i}\theta} \right)
= \psi(r) \rho(\theta),$ and let $\rho(\theta) = \sum_n \hat{\rho}_n
e^{\mathbf{i} n \theta}$ be the Fourier expansion of $\rho$. Then 
\eq{eq: twisted eisenstein mellin}{
\Theta_{f_R} (g \Gamma) = \frac{1}{2\pi \mathbf{i}} \sum_{n \in \Z} \hat{\rho}_{-n}
\int_{\re(s)=\sigma} \Psi(s) E_i\left(g, \frac{s}{2} \right)_{n} R^s ds.
}

\end{enumerate}
\end{proposition}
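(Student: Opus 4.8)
\textbf{Proof plan for Proposition \ref{prop: Lemma 1.6}.}

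The plan is to treat part (1) as the base case and then deduce parts (2) and (3) by inserting a Fourier expansion in the angular variable and recognizing the resulting inner integrals as instances of the twisted Eisenstein series \equ{eq: twisted eisenstein 2}. For part (1), I would start from the definition $\Theta_{f_R}(g\Gamma) = \sum_{u \in g\Gamma v} f_R(u) = \sum_{u \in g\Gamma v} \psi(\|u\|/R)$, which is a finite sum at each point since $f$ (hence $f_R$) is compactly supported in $\R^2 \sm \{0\}$ and $g\Gamma v$ is discrete. Apply Mellin inversion \equ{eq: Mellin inversion} to $\psi$ itself: for $x>0$, $\psi(x) = \frac{1}{2\pi \mathbf{i}} \int_{\re(s) = \sigma} \Psi(s) x^{-s}\,ds$. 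Substituting $x = \|u\|/R$ gives $\psi(\|u\|/R) = \frac{1}{2\pi \mathbf{i}} \int_{\re(s)=\sigma} \Psi(s)\, \|u\|^{-s} R^s\,ds$. Now sum over $u$ and exchange the (finite) sum with the integral; the sum over $u$ of $\|u\|^{-s}$ is, by definition \equ{eq: eisenstein} applied with $s/2$ in place of $s$, exactly $E^{(\Gamma,v)}(g, s/2) = E_i(g, s/2)$ under the stated normalization $v = \mathfrak{s}_i \ebf_1$. This yields \equ{eq: eisenstein mellin 3}. The one technical point is that $\Psi = \mathcal{M}\psi$ decays rapidly on vertical lines (since $\psi$ is smooth and compactly supported on $\R_+$, $f(x) = \psi(e^x)$ is Schwartz, so its Mellin transform decays faster than any polynomial), which justifies the absolute convergence of the vertical integral and the interchange with the finite sum; for a fixed $g$ the number of relevant $u$ is bounded, so no subtlety from the size of $\Gamma v$ enters here.

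For part (2), I would write $f_R(re^{\mathbf{i}\theta}) = \psi\big(\tfrac{r}{R\rho(\theta)}\big)$ and again apply Mellin inversion to the outer $\psi$: $\psi\big(\tfrac{r}{R\rho(\theta)}\big) = \frac{1}{2\pi \mathbf{i}} \int_{\re(s)=\sigma} \Psi(s)\, r^{-s} R^s \rho(\theta)^s\,ds$. Now use the hypothesis that $\rho(\theta)^s = \sum_{n\in\Z} \hat\rho_n(s) e^{\mathbf{i}n\theta}$ is the Fourier expansion of $\rho^s$ in $\theta$. Since $\rho$ is smooth and everywhere positive, $\rho^s$ is smooth in $\theta$ for each $s$, so the Fourier series converges and the coefficients $\hat\rho_n(s)$ decay rapidly in $n$ (with control uniform for $s$ in the vertical strip, because $\rho$ is bounded away from $0$ and $\infty$ and all its $\theta$-derivatives are bounded). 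Writing each $u \in g\Gamma v$ in polar form $u = \|u\| e^{\mathbf{i}\theta_u}$ and summing, the contribution of the $n$-th Fourier mode to $\Theta_{f_R}(g\Gamma)$ is $\frac{1}{2\pi \mathbf{i}}\int_{\re(s)=\sigma} \Psi(s)\hat\rho_n(s) R^s \big(\sum_{u} \|u\|^{-s} e^{\mathbf{i}n\theta_u}\big)\,ds$, and by \equ{eq: twisted eisenstein 2} the inner sum equals $E_i(g, s/2)_{-n}$. Re-indexing $n \mapsto -n$ produces \equ{eq: twisted eisenstein mellin 4}. The interchanges (finite $u$-sum with the $s$-integral, and with the $n$-sum) are justified by the rapid decay of $\Psi$ on vertical lines together with the rapid decay of $\hat\rho_n(s)$ in $n$.

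Part (3) is the special case of the computation in part (2) where the radial and angular dependences of $f$ are literally decoupled: here $f_R(re^{\mathbf{i}\theta}) = \psi(r/R)\rho(\theta)$, so Mellin inversion is applied only to the radial factor $\psi(r/R) = \frac{1}{2\pi\mathbf{i}}\int_{\re(s)=\sigma}\Psi(s) r^{-s} R^s\,ds$, and the angular factor $\rho(\theta) = \sum_n \hat\rho_n e^{\mathbf{i}n\theta}$ is expanded directly (its coefficients $\hat\rho_n$ no longer depend on $s$). Proceeding exactly as in part (2)—polar form, summing over $u$, interchanging sums and integral—the $n$-th mode contributes $\hat\rho_n \cdot \frac{1}{2\pi\mathbf{i}}\int_{\re(s)=\sigma}\Psi(s) R^s E_i(g, s/2)_{-n}\,ds$, and relabeling $n \mapsto -n$ gives \equ{eq: twisted eisenstein mellin}. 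I expect the main obstacle to be purely bookkeeping: keeping the sign convention in the exponent $e^{-\mathbf{i}n\theta_u}$ of \equ{eq: twisted eisenstein 2} consistent with the Fourier conventions for $\rho^s$ and $\rho$ so that the subscript comes out as $n$ (after the relabeling) rather than $-n$, and verifying that the formal interchanges of summation and integration are legitimate—both of which follow from the rapid decay of $\Psi$ (Schwartz data) and of the Fourier coefficients (smoothness and positivity of $\rho$), with no input needed from Selberg's continuation theorems since everything here happens in the region $\re(s) = \sigma > 2$ of absolute convergence.
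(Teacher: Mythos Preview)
Your overall approach matches the paper's: apply Mellin inversion termwise, expand in Fourier modes of the angular variable, and recognize the resulting inner sums as (twisted) Eisenstein series. The paper actually derives (1) as the special case $\rho\equiv 1$ of (2), but your direct argument for (1) is equivalent.

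There is, however, a real gap in your justification of the interchange. You assert that the $u$-sum is finite (because $f_R$ has compact support and the orbit is discrete) and conclude that ``no subtlety from the size of $\Gamma v$ enters here.'' But after the interchange the inner sum $\sum_{u\in g\Gamma v}\|u\|^{-s}$ must run over the \emph{entire} infinite orbit in order to equal $E_i(g,s/2)$. The fact that $\Theta_{f_R}$ has only finitely many nonzero terms does not help: each integrand $\Psi(s)\|u\|^{-s}R^s$ is nonzero even when its integral $\psi(\|u\|/R)$ vanishes, so pulling the full infinite sum inside the integral requires Fubini, i.e.\ the finiteness of
\[
\sum_{u\in g\Gamma v}\int_{\re(s)=\sigma}|\Psi(s)|\,\|u\|^{-\sigma}R^\sigma\,|ds|
=\left(\int_{\re(s)=\sigma}|\Psi(s)|\,R^\sigma\,|ds|\right)\cdot\left(\sum_{u}\|u\|^{-\sigma}\right).
\]
The second factor is exactly the convergence of the Eisenstein series at $\sigma/2$, which needs the quadratic growth bound (Proposition~\ref{prop: quadratic upper bound}) together with $\sigma>2$; the paper invokes precisely this. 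A smaller point in your treatment of (2): the Fourier coefficients $\hat\rho_n(s)$ do not decay in $n$ uniformly in $s$ on a vertical line, since the $\theta$-derivatives of $\rho^s$ carry factors of $s$; one only gets $|\hat\rho_n(s)|\ll |s|^k/|n|^k$. This is harmless because the rapid decay of $\Psi$ absorbs the polynomial growth in $|s|$, but your stated uniformity is not correct as written.
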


\begin{proof}
There is no need to prove (1) since it is the special case of (2) with
$\rho(\theta) \equiv 1$. 
We will write the Fourier expansion $\rho(\theta)^s = \sum_n \hat{\rho}_{n}(s)
e^{\mathbf{i} n \theta}$ as $ \sum_n \hat{\rho}_{-n}(s)
e^{-\mathbf{i} n \theta}$. The Fourier series converges absolutely for
each $s$ since $\rho$ is smooth, and the coefficients admit an upper
bound 
\eq{eq: upper bound fourier}{
|\hat{\rho}_{n}(s)| \leq 2\pi \, \|\rho\|_\infty^{\sigma}, 
%(\sup_\theta \rho(\theta))^{\sigma}, 
\ \ \text{where } \sigma =\re(s).
}
More generally, applying integration by parts twice, we see that 
\eq{eq: upper bound fourier 2}{
|\hat{\rho}_{n}(s)| \ll \frac{|s|^2}{n^2},
}
where the implicit constant depends on $\sigma, \, \|\rho\|_\infty, \, 
\|\rho'\|_\infty,$ and $\|\rho''\|_\infty$.
The Mellin transform $\mathcal{M}\psi$
satisfies $(\mathcal{M}\psi_R )(s)= R^{s} (\mathcal{M}\psi) (s)$ and
so by Mellin inversion
$$
\psi_R\left(\frac{y}{\rho}\right) = \frac{1}{2\pi \mathbf{i}} \int_{\re(s) = \sigma} \Psi(s)
y^{-s} R^{s} \rho^s ds.
$$
%For $u \in \R^2 \sm \{0\}$, %following \equ{eq: twisted eisenstein 2}, 
%write $u
%= \|u\| e^{\mathbf{i} 
%  \theta_u}$ where $\theta_u \in \R/2\pi \Z$. 
Plugging this into the
definition of  $\Theta_{f_R}$ and  writing each $u$ as $
\|u\|e^{\mathbf{i}\theta_u}$ we obtain 
\[
\begin{split}
\Theta_{f_R}(g\Gamma) = & \sum_{u \in g\Gamma v} \psi_R\left(\frac{\|u\|}{\rho (
  \theta_u)} \right)  = \frac{1}{2\pi \mathbf{i}} \int_{\re(s) = 
\sigma} \Psi(s) R^s \, \left(\sum_{u \in g\Gamma v} \|u\|^{-s}
\rho(\theta_u)^s \right) ds \\
%&= \sum_{u \in g\Gamma v} 
%\frac{1}{2\pi \mathbf{i}} \int_{\re(s) = \sigma} \Psi(s)
%\|u\|^{-s} R^{s} \left(\sum_{n \in
%  \Z} \hat{\rho}_{-n}(s) e^{-\mathbf{i}n \theta_u} \right)  ds \\
= & \ \frac{1}{2\pi \mathbf{i}} \int_{\re(s) = \sigma}
\Psi(s) R^{s} \sum_{n \in \Z} \hat{\rho}_{-n}(s) \left( \sum_{u \in g\Gamma v}
    \|u\|^{-s} e^{-\mathbf{i} n 
    \theta_u} \right) ds \\
= & \ \frac{1}{2\pi
  \mathbf{i}} \sum_{n \in \Z} \int_{\re(s) = \sigma} 
\Psi(s) \hat{\rho}_{-n}(s) R^{s} \left( \sum_{u \in g\Gamma v}
    \|u\|^{-s} e^{-\mathbf{i} n 
    \theta_u} \right) ds. 
%\\ & \stackrel{\equ{eq: twisted eistenstein 2}}{=} 
%\frac{1}{2\pi \mathbf{i}} \sum_{n \in \Z} \hat{\rho}_n
%\int_{\re(s)=\sigma} \Psi(s) ... R^s ds.
\end{split}
\]
To justify switching order of integration and summation in the first
line use the quadratic growth of the set $g\Gamma v$ (Proposition
\ref{prop: quadratic upper 
  bound}) and the assumption $\sigma >2$. In the second line, use also
\equ{eq: upper bound fourier}, and in the third line use Proposition \ref{prop: quadratic upper
  bound}, \equ{eq: upper bound fourier 2},
and the fact that $\Psi$ decays faster than any polynomial along the
line $\re(s)=\sigma$. 
Formula \equ{eq: twisted eisenstein mellin 4} now follows by plugging
in \equ{eq: twisted eisenstein 2}. 

For (3), we have 
\[
\begin{split}
\Theta_{f_R}(g\Gamma) = & \sum_{u \in g\Gamma v} \rho (
  \theta_u) \psi_R\left(\|u\|\right)  \\
= &\sum_{u \in g\Gamma v} \left( \sum_{n
    \in \Z} \hat{\rho}_{-n} e^{-\mathbf{i}n\theta_u} \right) \, 
  \frac{1}{2\pi \mathbf{i}} \int_{\re(s) =  
\sigma} \Psi(s) R^s \|u\|^{-s}
 ds \\
%&= \sum_{u \in g\Gamma v} 
%\frac{1}{2\pi \mathbf{i}} \int_{\re(s) = \sigma} \Psi(s)
%\|u\|^{-s} R^{s} \left(\sum_{n \in
%  \Z} \hat{\rho}_{-n}(s) e^{-\mathbf{i}n \theta_u} \right)  ds \\
= &  \frac{1}{2\pi \mathbf{i}} \sum_{n \in \Z} \hat{\rho}_{-n} \int_{\re(s) = \sigma}
\Psi(s) R^{s} \left( \sum_{u \in g\Gamma v} 
    \|u\|^{-s} e^{-\mathbf{i} n 
    \theta_u} \right) ds, 
%\\
%= &  \frac{1}{2\pi
%  \mathbf{i}} \sum_{n \in \Z} \int_{\re(s) = \sigma} 
%\Psi(s) \hat{\rho}_{-n}(s) R^{s} \left( \sum_{u \in g\Gamma v}
%    \|u\|^{-s} e^{-\mathbf{i} n 
%    \theta_u} \right) ds; 
%\\ & \stackrel{\equ{eq: twisted eistenstein 2}}{=} 
%\frac{1}{2\pi \mathbf{i}} \sum_{n \in \Z} \hat{\rho}_n
%\int_{\re(s)=\sigma} \Psi(s) ... R^s ds.
\end{split}
\]
and again we plug in \equ{eq: twisted eisenstein 2}, leaving it to the
reader to justify changing the order of sums and 
integrals.
\end{proof}

\subsection{Additional properties}
We will need the extensions of the results of \S \ref{subsec: Selberg
  results} to twisted Eisenstein 
series, and also some further properties. For convenience we
collect all the results we will need, including results already discussed
above, in the following list. 

\begin{theorem}\name{thm: factsheet}
Let $\Gamma$ be a nonuniform lattice in $G$ with $k$ cusps. Let
$\mathfrak{s}_i$ be the elements conjugating these cusps to $\infty$ as in
the discussion preceding \equ{eq: eisenstein 2}. Let $E_i(z,s)$
(resp. $E_i(z,s)_{2n}$) denote the (twisted) Eisenstein series as in
\equ{eq: eisenstein 2} (resp. \equ{eq: twisted eisenstein}). 
%, let
%$\Phi(s) = \left(\varphi_{ij}(s) \right)_{ij}$ be the constant term
%matrix as in \equ{eq: constant term}, let $(s_\ell)$ denote all the
%poles of the functions $\varphi_{ij}$ in the interval $[1/2,1]$
%\combarak{Once again, clarify the relationship to the poles of the
%  $E_i$} let $\EE(z,s) = (E_1(z,s), 
%\ldots, E_k(z,s))$, and let $\mathbf{\Gamma}$ denote the classical
%$\Gamma$-function. For $n \in \N$ define
%$$
%\EE_{2n}(z,s) = (E_1(z,s)_{2n}, \ldots, E_k(z,s)_{2n}),
%$$
%$$
%\Phi_{2n}(s) = (-1)^n \,
%\frac{\mathbf{\Gamma}(s)^2}{\mathbf{\Gamma}(s-n)\mathbf{\Gamma}(s+n)} \Phi(s), \
%\ \Psi_{2n} (s) = \det \Phi_{2n}(s),
%$$
%let $q = q_\Gamma>0$ be a real number specified by Selberg (see
%\cite[p. 655]{Selberg gottingen}) and set  
%$$
%\Psi^*(s) = q^{2s-1} \prod_\ell \frac{s-s_\ell}{s-1+s_\ell} \Psi_0(s), \ \ 
%\omega(t) = 1-\frac{\Psi^{*'}}{\Psi^*} \left(\frac12 + \mathbf{i} t \right). 
%$$
Then there is a function $\omega : \R \to \R$ (see \equ{eq: def omega} for an
explicit definition) such that the following hold: 
%\combarak{Omit the $q, \Phi, \Psi, \omega$ if they end up not being
%  used. Maybe we can just put them into the proofs if they do get
%  used. }
%
%Then:
\begin{itemize}
\item[{\bf (AC)}]
The functions $E_i(z,s)_{2n}$ are absolutely convergent for
$\re(s)>1$, and for any $\zeta>0$, they are uniformly bounded and 
uniformly convergent on sets of the form $\{\re(s) \geq 1+\zeta\}$.

\item[{\bf (M)}]
The functions $s \mapsto E_i(z,s)_{2n}$ have a meromorphic continuation to all of
$\C$. 
\item[{\bf (P)}]
The poles of $s \mapsto E_i(z,s)_{2n}$ with $\re(s) \geq 1/2$ are all
simple, lie in $(1/2,1]$, and are contained in the set $(s_\ell)$ of
poles of the constant term matrix $\Phi$ of \equ{eq: constant term}. 
%\combarak{The last statement was a topic of
%  discussion, see Selberg Thm. 7.3.} 
\item[{\bf (1)}]
There is a pole at $s=1$ if and only if $n=0$, with residue
$\mathrm{covol}(\Gamma)^{-1}$. 

\item[{\bf (1/2)}]
The functions 
$E_i(z,s)_{2n}$ have no poles on the line $\re(s)=1/2$.
%\item[$\mathbf{(\Phi)}$]
%On the line $\re(s)=1/2$, for each $i,j$, $s\mapsto \varphi_{ij}(s)_{2n}$
%is holomorphic, and for each $j$, $\|(\varphi_{ij}(s)_{2n})_{i=1}^k\|=1$. The
%functions $s \mapsto \varphi_{ij}(s)_{2n}$ are bounded uniformly on compact
%subsets of $\{s \in \C : \re(s) \in [1/2, 3/2]\} \sm \{s_\ell\}.$ The
%functional equations
%$$
%\Phi(s)\Phi(1-s) = \mathrm{Id}, \, \Phi(s) = {}^{\mathrm{tr}}\Phi(s),
%\, \overline{\Phi(s)} = \Phi(\bar s) 
%$$
%are satisfied for all $s \in \C$. 

%\item[$\mathbf{(\mathcal{E})}$]
%$\EE_{2n}(z,s) = \Phi_{2n}(s) \EE_{2n}(z, 1-s).$  
\item[{\bf $(\omega)$}]
For all $t \in \R$, $\omega(t) \geq 1, \, \omega(-t) = \omega(t)$ and
for $T \geq 1$,
$\int_{-T}^T
\omega(t) dt \ll T^2$, where implicit constants depend on
$\Gamma$. 
\item[{\bf (G1)}] If $n \in \Z$, 
%If
 $\re(s) \geq 1/2$ % and $s \notin (s_i)$ 
and $|t| \geq |n|+1$ then $E_{i}(z,s)_{2n} \ll |t|\,
\sqrt{\omega(t)}$, where  $t = \im(s)$ 
%and $\omega$ is a function satisfying
%$\int_{-R}^R \omega(x) dx = O(R^5)$ and $\omega(x) \geq 1$. 
(implicit constants depend on $z$ and
$\Gamma$ but not on $n$ or $\re(s)$). 
\item[{\bf (G1/2)}] For all $n$, 
$\int_{-T}^T \left|E_{i}\left(z,\frac12+\mathbf{i}t \right)_{2n}
  \right|^2dt \ll (T+|n|)^2$ 
(implicit constants depend on $\Gamma$). 
\end{itemize}
\end{theorem}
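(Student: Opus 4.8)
The plan is to assemble these facts from the classical analytic theory of Eisenstein series (\cite{Kubota, Hejhal, Iwaniec, Selberg gottingen}), reducing the weight‑$2n$ case to the untwisted one of \S\ref{subsec: Selberg results} whenever possible. Property {\bf (AC)} requires nothing new: the twisting factor $\bigl(\tfrac{c_{\gamma'}z+d_{\gamma'}}{|c_{\gamma'}z+d_{\gamma'}|}\bigr)^{2n}$ in \equ{eq: twisted eisenstein} has modulus $1$, so absolute convergence on $\{\re(s)>1\}$ and the uniform bounds on $\{\re(s)\geq 1+\zeta\}$ follow verbatim from the proof of Corollary \ref{cor: convergence} via Proposition \ref{prop: quadratic upper bound}. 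For the remaining items one works cusp by cusp with the Fourier expansion in $x=\re(z)$ of the $1$‑periodic function $z\mapsto E_i(\mathfrak{s}_jz,s)_{2n}$: its zeroth coefficient has the form $\delta_{ij}\,y^{s}+\varphi^{(2n)}_{ij}(s)\,y^{1-s}$, where a beta‑integral computation shows that $\varphi^{(2n)}_{ij}(s)$ differs from the entry $\varphi_{ij}(s)$ of \equ{eq: constant term} only by the elementary factor $\mathbf{\Gamma}(s)^2/\bigl(\mathbf{\Gamma}(s+n)\mathbf{\Gamma}(s-n)\bigr)=\prod_{j=1}^{n}\tfrac{s-j}{s+j-1}$ (times a nonvanishing entire factor), while the nonzero Fourier coefficients are Whittaker ($K$‑Bessel type) functions times divisor sums.

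For {\bf (M)}, {\bf (P)}, {\bf (1)} and {\bf (1/2)} I would argue as follows. Meromorphic continuation and the functional equation for the vector $(E_1(\cdot,s)_{2n},\dots,E_k(\cdot,s)_{2n})$ come either from Selberg's method applied to the resolvent of the weight‑$2n$ Laplacian, or from the Maass weight‑raising/lowering operators, which intertwine $E_i(\cdot,s)_{2n}$ with $E_i(\cdot,s)_{2n\pm2}$ up to linear‑in‑$s$ factors vanishing only at $\pm n$ (consistent with the $\mathbf{\Gamma}$‑ratio above); either route yields {\bf (M)} from Theorems \ref{thm: analytic continuation}--\ref{thm: functional equation}. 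For {\bf (P)}: a pole $s_0$ with $\re(s_0)\geq\tfrac12$ forces, via the Maass--Selberg relations for the truncated series, the $y^{s_0}$‑part of the constant term of the residue to vanish (else the residue, an $L^2$‑eigenfunction of the weight‑$2n$ Laplacian, would fail to be square‑integrable near a cusp), so $s_0$ is a pole of $\varphi^{(2n)}_{ij}$; since $\prod_{j=1}^n\tfrac{s-j}{s+j-1}$ has no poles in $(\tfrac12,1]$, $s_0$ is a pole of $\Phi$, and simplicity follows from $\Phi(s)\Phi(1-s)=\mathrm{Id}$ as in the untwisted case. For {\bf (1)}: when $n\neq0$ the factor $\prod_{j=1}^n\tfrac{s-j}{s+j-1}$ has a simple zero at $s=1$ (the $j=1$ numerator), cancelling the simple pole of $\Phi$ there, so $E_i(\cdot,s)_{2n}$ is regular at $s=1$; for $n=0$ the pole and residue $\mathrm{covol}(\Gamma)^{-1}$ are Theorem \ref{thm: analytic continuation}. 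Finally {\bf (1/2)} is Selberg's theorem: from the functional equation and the symmetry $\Phi=\Phi^{T}$, $\Phi$ is unitary on $\re(s)=\tfrac12$, hence regular there, and the Maass--Selberg analysis transfers regularity to $E_i(\cdot,s)_{2n}$.

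It remains to set up $\omega$ and establish the quantitative bounds. One defines $\omega(t)$ explicitly as a fixed nonnegative even function built from $\Phi(\tfrac12+\mathbf{i}t)$ — for instance $1+\bigl|\tfrac{d}{dt}\log\det\Phi(\tfrac12+\mathbf{i}t)\bigr|$ plus the number of poles of $\Phi$ within distance $1$ of $\tfrac12+\mathbf{i}t$ — so $\omega\geq1$ and $\omega(-t)=\omega(t)$ are immediate, while $\int_{-T}^T\omega\ll T^2$ is Selberg's quadratic estimate for the density of the scattering frequencies (equivalently for the growth of $\det\Phi$), giving {\bf ($\omega$)}. For {\bf (G1/2)} one integrates $|E_i(\mathfrak{s}_jz,\tfrac12+\mathbf{i}t)_{2n}|^2$ against a smooth cutoff over the truncated fundamental domain and invokes the Maass--Selberg relations: the terms diverging with the truncation height cancel because $\varphi^{(2n)}_{ij}$ is unimodular on the line, leaving a quantity $\ll(T+|n|)^2$, the $|n|^2$ coming from the range $|t|\lesssim|n|$ where the weight‑$2n$ spectral data dominates. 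For {\bf (G1)} one converts this $L^2$‑bound into a pointwise bound on $\re(s)=\tfrac12$ by elliptic/mean‑value estimates for the eigenvalue equation (at the cost of a polynomial loss in $|t|$), uses the trivial bound from {\bf (AC)} on $\re(s)=1+\zeta$, and interpolates across $\tfrac12\leq\re(s)\leq1+\zeta$ by a Phragm\'en--Lindel\"of argument (its growth hypothesis checked from Stirling for $\varphi^{(2n)}_{ij}$ and standard Whittaker‑function bounds); for $|t|\geq|n|+1$ the off‑constant‑term contribution is $O(1)$, with an implicit dependence on $z$, which yields the stated $|t|\sqrt{\omega(t)}$.

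The main obstacle is the uniformity in $n$. The meromorphic continuation, functional equation, and localization of poles are entirely classical and cost nothing; what is delicate is controlling the dependence on the weight $2n$ in {\bf (G1)} and {\bf (G1/2)}, i.e.\ showing that the implicit constants there do not grow with $n$. This requires explicit control of $\mathbf{\Gamma}(s)^2/\bigl(\mathbf{\Gamma}(s+n)\mathbf{\Gamma}(s-n)\bigr)$ and of the weight‑$2n$ Whittaker functions both in the oscillatory range $|t|>|n|$ and through the transition $|t|\approx|n|$, together with a count of scattering poles uniform in these parameters. Isolating all of the $\Gamma$‑dependent analytic data into the single auxiliary function $\omega$ is exactly what keeps this bookkeeping tractable, and is why the statement is phrased this way.
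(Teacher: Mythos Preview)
Your treatment of {\bf (AC)}, {\bf (M)}, {\bf (P)}, {\bf (1)}, {\bf (1/2)} and {\bf ($\omega$)} matches the paper's: these are cited to \cite{Selberg gottingen, Kubota, Hejhal}, with the weight-$2n$ case reduced to $n=0$ via the $\mathbf{\Gamma}$-ratio $B_n(s)=\mathbf{\Gamma}(s)^2/(\mathbf{\Gamma}(s+n)\mathbf{\Gamma}(s-n))$ exactly as you describe. For {\bf (G1/2)} the paper simply cites \cite{MS}.

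The substantive difference is in {\bf (G1)}. The paper does \emph{not} use Phragm\'en--Lindel\"of. Instead (see the appendix) it runs the Maass--Selberg relations directly for \emph{every} $\sigma\in[\tfrac12,\tfrac32]$: one shows $k-\|\Phi_{2n}(s)\|^2\ll\vre\,\omega(t)$ uniformly in $\sigma$ (this is where the estimate $1-|B_n(s)|^2\ll\vre$ for $|t|\geq|n|$ enters), obtains the $L^2$-in-$z$ bound $\|E_j^Y(\cdot,s)_{2n}\|_2^2\ll\omega(t)$ uniformly in $\sigma$, and then converts to a pointwise bound via a weight-$2n$ point-pair invariant $k_\delta$ with $\delta\asymp|t|^{-2}$, using that $E_i(\cdot,s)_{2n}$ is an $L_{k_\delta}$-eigenfunction with eigenvalue $\gg|t|^{-2}$ (as in \cite{MS}). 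The Cauchy--Schwarz step then gives $|E_i(z_0,s)_{2n}|\ll|t|^2\sqrt{\omega(t)}\sqrt{\delta}=|t|\sqrt{\omega(t)}$ for all $\sigma$ at once.

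Your Phragm\'en--Lindel\"of route has a real obstacle: the target bound involves $\sqrt{\omega(t)}$, which is not of the form $|t|^c$ and is not the boundary value of anything holomorphic in $s$, so a convexity interpolation between $\re(s)=\tfrac12$ and $\re(s)=1+\zeta$ cannot reproduce this factor. You would at best recover $|t|^{3/2+\vre}$ (after bounding $\omega(t)\ll|t|^{1+\vre}$ via {\bf ($\omega$)}), which is weaker than the statement, and you would also need to clear the poles $s_\ell\in(\tfrac12,1]$ before applying the principle. The paper's direct Maass--Selberg-for-all-$\sigma$ approach sidesteps both issues and is what makes the precise $|t|\sqrt{\omega(t)}$ possible.
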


\begin{proof}
For $n=0$, all items are given in
\cite{Selberg gottingen}, see also \cite{Hejhal,
  Kubota}. The extension of the first five properties to general $n$
is given in \cite[Chapter 6]{Kubota} (see also \cite{Sarnak
  horocycles}). Property  {\bf (G1/2)} is extended to arbitrary $n$ by 
Marklof and Str\"ombergsson in \cite{MS} (in \cite{MS} only the case
of the integral over $[0,T]$ is discussed, but the proof extends
verbatim to the interval $[-T, 0]$). To the best of our 
knowledge, there is no presentation of property {\bf (G1)} for general
$n$ in the literature. We fill this gap in the appendix to this paper,
see Theorem \ref{prop: in appendix}.
\end{proof}
\section{A bound $O\left(R^{\frac{4}{3}}\right)$}\name{sec: expository}
%The notation $f(T) = O\left(T^{\eta+}\right)$ means that
%$f(T)=O\left(T^{\eta+\vre} \right)$ for each $\vre>0$, i.e.
%$$\forall \vre>0, \ \ \limsup_{T \to \infty} \frac{f(T)}{T^{\eta + \vre}} < \infty.$$ 
The following is the main result of this section. It immediately
implies Theorem \ref{thm: good}. 

\begin{theorem}\name{thm: more detailed good}
Suppose $\Gamma$ is a lattice in $G$,
$\Gamma v$ is a discrete orbit 
corresponding to the $i$-th cusp of $\Gamma$, $E_i(g, s)$ is the
corresponding Eisenstein series, and $s_0=1 > s_1 > \cdots > s_r >
1/2$ are the poles 
of $E_i(g, \cdot)$. Then there are $c_0, \ldots, c_r$ such that 
$$
\mathbf{N}(g, R) = c_0 R^2 + \sum_{\ell =1}^r  c_\ell R^{2s_\ell} +
O\left(R^{\frac43 %+
}\right).
$$
Furthermore, if $v$ is rescaled so that $\mathfrak{s}_i \ebf_1 = v$, then the $c_\ell$ are the
residues of $s \mapsto \frac{E_i(g, s)}{s}$ at the
poles $s_\ell.$ In particular,  
the quadratic growth constant $c_0$ is given by formula \equ{eq: def
  c0}. 
\end{theorem}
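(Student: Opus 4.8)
The plan is the classical strategy behind the Prime Number Theorem --- Mellin-invert the Eisenstein series and push the contour to the critical line --- the only twist being an averaging in the radial variable, needed because $E_i(g,\cdot)$ is controlled on $\re(s)=\tfrac12$ only in mean square. Here $E_i(g,s)$ is the series $\sum_{u\in g\Gamma v}\|u\|^{-2s}$ of \equ{eq: eisenstein}, which under the normalization $v=\mathfrak{s}_i\ebf_1$ is the function of \eqref{eq:eisensteinagrees}, whose poles with $\re(s)\ge\tfrac12$ are exactly the simple poles $s_0=1>s_1>\cdots>s_r$. I would start from the representation $\mathbf{N}(g,R)=\frac{1}{2\pi}\lim_{T\to\infty}\int_{-T}^{T}\frac{E_i(g,\sigma+\mathbf{i}t)}{\sigma+\mathbf{i}t}R^{2(\sigma+\mathbf{i}t)}\,dt$, $\sigma>1$, of \equ{eq: eisenstein mellin 2}. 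Since property \textbf{(G1/2)} of Theorem~\ref{thm: factsheet} is merely an $L^2$ bound, a naive contour shift to $\re(s)=\tfrac12$ cannot be estimated; instead, for a parameter $0<H\le R$, monotonicity of $R\mapsto\mathbf{N}(g,R)$ gives the sandwich
\[
\frac1H\int_{R-H}^{R}\mathbf{N}(g,\rho)\,d\rho\ \le\ \mathbf{N}(g,R)\ \le\ \frac1H\int_{R}^{R+H}\mathbf{N}(g,\rho)\,d\rho ,
\]
so it is enough to prove the asymptotic with error $O(RH+R^{3/2}H^{-1/2})$ for each window average. Integrating \equ{eq: eisenstein mellin 2} in $\rho$ and interchanging the $\rho$- and $t$-integrations (legitimate after truncating at height $T$ and then letting $T\to\infty$) yields
\[
\frac1H\int_{R}^{R+H}\mathbf{N}(g,\rho)\,d\rho=\frac{1}{2\pi\mathbf{i}}\int_{\re(s)=\sigma}\frac{E_i(g,s)}{s}\cdot K_H(s)\,ds,\qquad K_H(s)=\frac{(R+H)^{2s+1}-R^{2s+1}}{H(2s+1)} .
\]
The kernel $K_H$ is holomorphic away from $s=-\tfrac12$ and satisfies $K_H(s)\ll\min\!\bigl(R^{2\re(s)},\,R^{2\re(s)+1}H^{-1}(1+|\im s|)^{-1}\bigr)$; combined with $E_i(g,s)/s\ll(1+|\im s|)^{-1}$ on $\re(s)=\sigma$ (Corollary~\ref{cor: convergence}), this integral is now \emph{absolutely} convergent.

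Next I would shift the contour from $\re(s)=\sigma$ to $\re(s)=\tfrac12$. By property \textbf{(P)} the only poles crossed are $s_0,\dots,s_r$; by property \textbf{(G1)} together with $\int_{-T}^{T}\omega(t)\,dt\ll T^2$ from property $(\omega)$, the horizontal segments $\im(s)=\pm T$ contribute $o(1)$ along a suitable sequence $T_k\to\infty$. The residue at $s_\ell$ is $c_\ell K_H(s_\ell)$ with $c_\ell=\res_{s=s_\ell}\frac{E_i(g,s)}{s}$, and $K_H(s_\ell)=R^{2s_\ell}+O(R^{2s_\ell-1}H)$, so these residues sum to $\sum_{\ell=0}^{r}c_\ell R^{2s_\ell}+O(RH)$. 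This is the main term, and pins down the $c_\ell$ as claimed.

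It remains to bound the shifted integral along $\re(s)=\tfrac12$, where $|R^{2s+1}|=R^2$ and $K_H\ll\min(R,\,R^2H^{-1}|t|^{-1})$ with $t=\im s$, so that integral is
\[
\ll R\!\int_{|t|\le R/H}\!\frac{|E_i(g,\tfrac12+\mathbf{i}t)|}{1+|t|}\,dt\ +\ \frac{R^2}{H}\!\int_{|t|>R/H}\!\frac{|E_i(g,\tfrac12+\mathbf{i}t)|}{(1+|t|)^2}\,dt .
\]
Decomposing dyadically and applying Cauchy--Schwarz on each block $|t|\in[2^{j-1},2^j]$, using $\int_{-X}^{X}|E_i(g,\tfrac12+\mathbf{i}t)|^2\,dt\ll X^2$ from \textbf{(G1/2)} (the case $n=0$), bounds the first integral by $R(R/H)^{1/2}$ and the second by $\tfrac{R^2}{H}(R/H)^{-1/2}$, both equal to $R^{3/2}H^{-1/2}$. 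Hence each window average, and therefore $\mathbf{N}(g,R)$, equals $\sum_{\ell=0}^{r}c_\ell R^{2s_\ell}+O(RH+R^{3/2}H^{-1/2})$; the choice $H=R^{1/3}$ balances the two errors and gives $O(R^{4/3})$.

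Finally I would identify $c_0$ and treat ellipses. Since $s\mapsto 1/s$ is holomorphic with value $1$ at $s=1$, $c_0=\res_{s=1}E_i(g,s)$, which by \eqref{eq:eisensteinagrees} and property \textbf{(1)} of Theorem~\ref{thm: factsheet} equals $\mathrm{covol}(\Gamma)^{-1}$ if $-\mathrm{Id}\notin\Gamma$ and $2\,\mathrm{covol}(\Gamma)^{-1}$ otherwise --- precisely \equ{eq: def c0}, valid under the normalization $v=\mathfrak{s}_i\ebf_1$. For a centered ellipse with defining positive definite form $Q$, write $Q(u)=\lambda^{2}\|g_0u\|^{2}$ with $g_0\in G$, $\lambda>0$; then $\#\{u\in g\Gamma v:Q(u)\le R^{2}\}$ is the Euclidean ball count for the orbit $(g_0g)\Gamma v$ at radius $R/\lambda$, and the result follows from the case already proved, with modified constants. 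The main obstacle throughout is the critical-line estimate: $E_i(g,\tfrac12+\mathbf{i}t)$ neither decays pointwise nor is absolutely integrable, and the argument succeeds only because the radial averaging supplies exactly the kernel decay $\min(R,R^2/(H|t|))$ which, balanced against the $O(RH)$ cost of reinstating the sharp count, turns the mean-square bound \textbf{(G1/2)} into the exponent $\tfrac43$.
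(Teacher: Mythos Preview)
Your proof is correct and follows the same contour-shift-plus-\textbf{(G1/2)} strategy as the paper, but the smoothing device is different. The paper replaces the sharp indicator $\chi_{[0,1]}$ by smooth compactly supported approximants $\psi^{\pm(U)}$, passes to the $\Theta$-transform, and works with the Mellin transform $\Psi^{(U)}(s)$ as kernel; the approximation parameter $U$ plays the role of your $R/H$, and the contour shift is packaged in Proposition~\ref{prop: for contour shift}, which locates good heights by the same averaging-in-$\omega$ argument you sketch. The final dyadic/Cauchy--Schwarz step on the critical line and the balancing of errors are then essentially identical in the two arguments.

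What the two routes buy: your radial-average (Riesz-mean) smoothing is somewhat more direct for the ball count, since it produces the explicit kernel $K_H(s)=\frac{(R+H)^{2s+1}-R^{2s+1}}{H(2s+1)}$ immediately from monotonicity and Perron, without introducing auxiliary bump functions or the $\Theta$-transform formalism. The paper's multiplicative smoothing via $\psi^{\pm(U)}$ is set up with an eye to the sequel: the same $\Theta$-transform/Mellin framework (Proposition~\ref{prop: Lemma 1.6}) is reused verbatim when the radial weight is coupled with an angular factor to handle sectors and smooth star-bodies via the twisted series $E_i(g,s)_{2n}$, where a pure $\rho$-average would no longer capture the geometry. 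One small point worth making explicit in your write-up: the interchange of the $\rho$-integral with the conditionally convergent $t$-integral in \equ{eq: eisenstein mellin 2} is cleanest if you instead observe directly that $\int_0^R \mathbf{N}(g,\rho)\,d\rho$ has an absolutely convergent Mellin representation on $\re(s)=\sigma>1$ (one extra integration by parts in \equ{eq: integration by parts}), from which your formula for the window average follows by subtraction.
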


The basic idea for  the proof of Theorem \ref{thm: more detailed good}
is a `contour shift' argument, as follows. We recall \equ{eq:
  eisenstein mellin} and \equ{eq: 
  eisenstein mellin 2}, which imply
that for $\sigma >1$, for a large parameter $T$, $\mathbf{N}(g, R) \approx 
\frac{1}{2\pi} \int_{-T}^T E(z, \sigma + \mathbf{i}t) \frac{R^{2(\sigma
    + \mathbf{i}t)}}{\sigma + \mathbf{i}t } dt.$ This is a path
integral over the line segment 
$L_{\sigma, T} = \{\sigma + \mathbf{i}t : t \in [-T, T]\}$ introduced  in \S
\ref{subsec: some sums}.  Since $s \mapsto E(z, s)$ is meromorphic in
all of $\C$, the Cauchy
residue formula makes it possible to replace this path integral over $L_{\sigma, T}$
with a path integral over $L_{1/2, T}$ and the two horizontal segments
$H^{\pm}=\{s \pm \mathbf{i} T : s \in [1/2, \sigma]\}$, taking into account the
residues in the rectangle bounded by these segments. We need to show
that the contribution of the integral over 
the segments $H^{\pm}$ is negligible,
compute the contribution of the poles in the Cauchy formula, 
and evaluate the integral over $L_{1/2, T}$. Each of these steps
presents difficulties as stated. To bypass them we recall that if
$\chi = \chi_{[0,1]}$ denotes the 
indicator function of $[0,1]$ and $f (u) = \chi(\|u\|)$ then
$\mathbf{N}(g, R)= \Theta_{f_R}(g\Gamma)$. We can 
justify the contour shift argument if $f$ is replaced by a smooth
compactly supported 
approximation $f^{(U)}$ (where $U$ is an approximation parameter), and
in this way, obtain bounds on the growth of 
$\Theta_{f^{(U)}_R}(g\Gamma)$ as $R \to \infty$. To make use of this
we bound the difference $\left|\mathbf{N}(g, R) -\Theta_{f^{(U)}_R}(g\Gamma) \right|$ as
well as the differences in the residues of the sums for $f$ and
$f^{(U)}$, and optimize the choice of $U$ as a function of $R$ to make
the combined error as small as possible. 

In order to justify the contour shift we will need the following:

\begin{proposition}\name{prop: for contour shift}
Let $E$ be a meromorphic function on $\C$, let $a<b$, and let $\Psi$ be
a holomorphic function defined in a neighborhood of $\{s \in \C: a \leq \re(s)
\leq b\}$, such that:

\begin{enumerate}
\item[(i)] $E$ has finitely many poles $(s_\ell)$ with $\re(s) \in [a,b]$. They
  are all simple poles, all on the real line, and there are no poles
  at $s=a$ and $s=b$. 
\item[(ii)]
There is a function $\omega : \R \to \R$ satisfying the conclusions
of Theorem \ref{thm: factsheet} item {\bf $(\omega)$}, and such
that for all $|t| \geq 1$,  $|E(s)|\ll 
\sqrt{\omega(t)} \, |t|$ (where $t = \im(s)$). 
% $A>0$ such that for any $0<r<\delta<1$, any $\sigma \in [a,b]$, and any $t$ with $|t| \geq
%\delta$, we have $\left|\int_{t-r/2}^{t+r/2}E(\sigma +
%\mathbf{i}y) \, dy \right | \ll e^{A|t|}$ 
%(the implicit constant depends on $\delta, r$ but not on $\sigma$ and $t$). 
%\item[(iii)]
%$E$ is bounded on $\{s \in \C: \re(s) = b\}$.
\item[(iii)]
For any $k>0$ there is $C'$ such that for all $\sigma \in [a,b]$, 
$$|\Psi(\sigma + \mathbf{i}t)| \leq \frac{C'}{t^k}.
$$
\item[(iv)] 
%On $\{\re(s)=a\}, \ E$ satisfies the bound 
%$$\int_{-T}^T |E(a +
%\mathbf{i}t)|^2 dt \ll T^2.$$
%\item[(v)] 
For $\sigma =a$ and $\sigma =b$, 
the integrals 
%\eq{eq: second v}{
%\int_{-\infty}^\infty E(\sigma  + \mathbf{i}t) \left(\max_{|y|\leq \delta}
%\left| \Psi\left(\sigma  + \mathbf{i}(t+y) \right) \right | \right) \,dt 
%}
%converges absolutely. In particular the integrals
\eq{eq: first v}{\int_{-\infty}^\infty 
E\left(\sigma  + \mathbf{i}t  \right) \, \Psi\left(\sigma +
\mathbf{i}t  \right) \,dt }
converge absolutely.
% and are bounded by a bound independent of $|y|
%\leq \delta.$
\end{enumerate}
Then 
\eq{eq: what we want shift}{
\frac{1}{2\pi \mathbf{i}} \int_{\re(s) = b} E(s)\Psi(s) ds =
\frac{1}{2\pi \mathbf{i}}\int_{\re(s) = a} E(s)\Psi(s) ds 
    + \sum_\ell \Psi(s_\ell) \mathrm{Res}|_{s=s_\ell} (E)
}
(where $\mathrm{Res}|_{s=s_0}h(s)=\lim_{s\to s_0}(s-s_0)h(s)$).  

\end{proposition}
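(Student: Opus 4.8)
The plan is the textbook `contour shift' via Cauchy's residue theorem applied to a tall rectangle, the only genuine work being the disposal of the two horizontal sides. Fix a height $T>0$ and let $\mathcal{R}_T$ be the rectangle with corners $a\pm\mathbf{i}T$ and $b\pm\mathbf{i}T$. By hypothesis (i) the poles $(s_\ell)$ of $E$ lying in the closed strip $\{a\le\re(s)\le b\}$ are finite in number, simple, real, and strictly between $a$ and $b$; since $E$ is meromorphic on all of $\C$, $\Psi$ is holomorphic on a neighbourhood of the strip, the vertical sides of $\mathcal{R}_T$ carry no poles (the only real point on $\re(s)=a$ is $s=a$, excluded by (i), and similarly for $b$), and the horizontal sides sit at imaginary part $\pm T\neq 0$ while every pole of $E$ in the strip is real, the function $E\Psi$ is holomorphic on $\partial\mathcal{R}_T$ and the residue theorem applies. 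As $s_\ell$ is a simple pole of $E$ and $\Psi$ is holomorphic there, $\mathrm{Res}|_{s=s_\ell}(E\Psi)=\Psi(s_\ell)\,\mathrm{Res}|_{s=s_\ell}(E)$, so
$$\frac{1}{2\pi\mathbf{i}}\int_{L_{b,T}}E\Psi\,ds-\frac{1}{2\pi\mathbf{i}}\int_{L_{a,T}}E\Psi\,ds+\frac{1}{2\pi\mathbf{i}}\left(\int_{H^{-}_{T}}-\int_{H^{+}_{T}}\right)E\Psi\,ds=\sum_\ell\Psi(s_\ell)\,\mathrm{Res}|_{s=s_\ell}(E),$$
where $L_{c,T}=\{c+\mathbf{i}t:t\in[-T,T]\}$ is traversed upward and $H^{\pm}_{T}=\{\sigma\pm\mathbf{i}T:\sigma\in[a,b]\}$ left-to-right.

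The main obstacle is bounding the horizontal integrals, since (ii) only controls $|E(\sigma+\mathbf{i}t)|$ by $\sqrt{\omega(t)}\,|t|$ and one has merely the averaged bound $\int_{-T}^{T}\omega\ll T^2$, not a pointwise bound on $\omega(T)$. I would get around this by not letting $T\to\infty$ continuously but choosing good heights: for each integer $m\ge 1$, the mean value theorem for integrals yields $T_m\in[m,m+1]$ with
$$\omega(T_m)\le\int_m^{m+1}\omega(t)\,dt\le\int_{-(m+1)}^{m+1}\omega(t)\,dt\ll(m+1)^2,$$
using $\omega\ge 0$ together with item $(\omega)$ of Theorem \ref{thm: factsheet}; hence $\sqrt{\omega(T_m)}\,T_m\ll(m+1)^2$. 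Since $\omega$ is even, the same holds at height $-T_m$, and combining (ii) with (iii) applied for $k=3$,
$$\left|\int_{H^{\pm}_{T_m}}E\Psi\,ds\right|\le(b-a)\sup_{\sigma\in[a,b]}\left|E(\sigma\pm\mathbf{i}T_m)\right|\cdot\sup_{\sigma\in[a,b]}\left|\Psi(\sigma\pm\mathbf{i}T_m)\right|\ll\frac{(m+1)^2}{m^{3}},$$
which tends to $0$ as $m\to\infty$.

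It then remains to pass to the limit along $T=T_m$. By hypothesis (iv) the integrals $\int_{-\infty}^{\infty}E(c+\mathbf{i}t)\Psi(c+\mathbf{i}t)\,dt$ converge absolutely for $c=a$ and $c=b$, so $\int_{L_{c,T_m}}E\Psi\,ds\to\int_{\re(s)=c}E\Psi\,ds$ as $m\to\infty$; the two horizontal contributions vanish in the limit by the previous paragraph; and the right-hand side does not depend on $m$. Letting $m\to\infty$ in the displayed identity therefore yields exactly \equ{eq: what we want shift}. I expect no difficulty beyond the pigeonhole choice of the heights $T_m$ — everything else is routine bookkeeping with the residue theorem — though one should take care to record at the outset, as above, that $E\Psi$ has no poles on $\partial\mathcal{R}_{T_m}$, so that the residue theorem is legitimately invoked.
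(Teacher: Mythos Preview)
Your proof is correct and follows essentially the same strategy as the paper: apply the residue theorem on a tall rectangle, use the averaged bound $\int_{-T}^T\omega\ll T^2$ together with a pigeonhole argument to find good heights at which the horizontal contributions vanish, and pass to the limit along those heights using (iv). The only cosmetic difference is that the paper averages the full horizontal integral $\int_a^b|E\Psi|(\sigma+\mathbf{i}t)\,d\sigma$ over $t\in[\tau,\tau+1]$ (using Cauchy--Schwarz on $\int\sqrt{\omega(y)}\,y\,dy$) and then extracts a good $\tau_n$, whereas you pigeonhole directly on $\omega$ to find $T_m$ with $\omega(T_m)\ll m^2$ and then bound the horizontal integral pointwise; your evenness observation $\omega(-T_m)=\omega(T_m)$ lets you work with a symmetric rectangle, while the paper selects $\tau_n$ and $\tau_{-n}$ independently.
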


\ignore{
For the proof of Proposition \ref{prop: for contour shift} we will
need the following well-known fact: 
\begin{proposition}[Phragm\'en-Lindel\"of principle, see
  e.g. \cite{bergeron}]\name{prop: phrag lind} 
Let $a < b$ and $t_0$ be real numbers, let $\Omega$ be the `half-strip' $\{z \in \C:
\re(z) \in [a,b], \im(z) > t_0\}$, let $\alpha, \beta, C$ be positive,
and let $f: \Omega \to \C$ 
be  a holomorphic function satisfying the following bounds:
\begin{enumerate}\item[(I)]
There is some $\alpha>0$ such that for all $z \in \Omega$, 
$$
|f(z)| \leq Ce^{|z|^\alpha}.
$$
\item[(II)]
For all $t >t_0$, and for both $\sigma = a$ and $\sigma =b$, 
$|f(\sigma+ \mathbf{i}t)| \leq C|1+t|^\beta$.

\end{enumerate}
Then there is $C'$ such that for all $\sigma \in [a,b]$, $|f(\sigma+
\mathbf{i}t)| \leq C'|1+t|^\beta$.

\end{proposition}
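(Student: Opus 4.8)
The plan is to run the classical Phragm\'en--Lindel\"of argument for a half-strip, in three moves: normalise away the expected polynomial growth, apply the maximum modulus principle against a double-exponentially small comparison factor, and pass to a limit. First I would record a preliminary observation: hypothesis (I) already forces $f$ to be bounded on every \emph{bounded} subset of $\Om$ (there $|z|^{\al}$ is bounded), so $f$ is bounded, by some $M_1$, on the sub-half-strip $\{a\le\re z\le b,\ t_0<\im z\le t_0+1\}$; on that piece the asserted inequality is trivial (the right-hand side $|1+t|^{\be}$ is bounded below there), so it suffices to treat the closed half-strip $\Om' \df \{a\le\re z\le b,\ \im z\ge t_0+1\}$, on which $f$ is now continuous up to the bottom edge. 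Next I would divide out the growth: put $w(z) = -\mathbf{i}(z-\mathbf{i}t_0)$, so $\re w(z)=\im z-t_0\ge 1$ on $\Om'$; hence $w(z)$ stays in $\{\re\ge 1\}$, the principal power $w(z)^{\be}=\exp(\be\Log w(z))$ is holomorphic and non-vanishing on $\Om'$, and $|w(z)^{\be}|=|z-\mathbf{i}t_0|^{\be}$ is comparable to $(1+|\im z|)^{\be}$ throughout $\Om'$. Set $g=f/w^{\be}$. From (II), the lower bound $|w^{\be}|\gg(1+|\im z|)^{\be}$ on the two vertical edges, and the bound $|g|\ll M_1$ on the bottom edge, there is a constant $M$ with $|g|\le M$ on $\partial\Om'$; and since $|w^{\be}|\ge 1$, hypothesis (I) gives $|g(z)|\le Ce^{|z|^{\al}}$ on $\Om'$. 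It then suffices to prove $|g|\le M$ on $\Om'$, for then $|f(z)|=|g(z)|\,|w(z)^{\be}|\ll M(1+|\im z|)^{\be}$.

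The heart of the argument is the comparison function. Let $B=\max(|a|,|b|)$ and fix any $\ga\in(0,\pi/(2B))$. On $\Om'$ one has $\re w(z)\ge 1>0$ and $\im w(z)=-\re z\in[-B,B]$, hence $\re(e^{\ga w(z)})=e^{\ga(\im z-t_0)}\cos(\ga\,\re z)\ge\cos(\ga B)\,e^{\ga(\im z-t_0)}>0$. For $\delta>0$ define $g_\delta(z)=g(z)\exp(-\delta\,e^{\ga w(z)})$. Since $\re(e^{\ga w})>0$ the extra factor has modulus $\le 1$, so $|g_\delta|\le|g|$ on all of $\Om'$ and in particular $|g_\delta|\le M$ on $\partial\Om'$; and since $|g_\delta(z)|\le C\exp(|z|^{\al}-\delta\cos(\ga B)\,e^{\ga(\im z-t_0)})\to 0$ as $\im z\to\infty$ in $\Om'$ (the inner exponential dominates any power $|z|^{\al}$), there is $T_\delta$ with $|g_\delta|\le M$ on the segment $\{\im z=T_\delta,\ a\le\re z\le b\}$. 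Applying the maximum modulus principle to $g_\delta$ on the bounded rectangle $\{a\le\re z\le b,\ t_0+1\le\im z\le T_\delta\}$ --- on whose four sides $|g_\delta|\le M$ --- gives $|g_\delta|\le M$ throughout that rectangle, hence, combining with the region above $T_\delta$, on all of $\Om'$.

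Finally, for each fixed $z\in\Om'$, letting $\delta\to 0^{+}$ gives $g_\delta(z)\to g(z)$, whence $|g(z)|\le M$; undoing the normalisation yields $|f(\sigma+\mathbf{i}t)|\le C'|1+t|^{\be}$ on $\Om'$ (and trivially on the thin strip below it), with $C'$ depending only on $a,b,t_0,\al,\be,C$. I expect the only slightly delicate points to be (a) the bottom edge of the half-strip, where no hypothesis is imposed but where (I) nonetheless supplies the needed boundedness, and (b) the quantitative choice $\ga<\pi/(2B)$, which is exactly what keeps $e^{\ga w}$ of positive real part across a strip of \emph{finite} width --- the place where the finiteness of $b-a$ is used. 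Note that no upper bound on $\al$ is required: the double exponential $e^{\delta e^{\ga\im z}}$ outgrows $e^{|z|^{\al}}$ for every $\al>0$.
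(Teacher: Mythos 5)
Your proof is correct, and it is the standard Phragm\'en--Lindel\"of argument for a half-strip: normalise by a holomorphic comparison function $w^\beta$ to reduce to showing a bound $|g|\le M$, multiply by the auxiliary factor $\exp(-\delta e^{\gamma w})$ with $\gamma$ small enough that $\cos(\gamma\,\re z)>0$ across the strip, apply the maximum modulus principle on a truncated rectangle, and let $\delta\to 0^+$. The observations you flag as the delicate points --- that hypothesis~(I) supplies the needed continuity/boundedness along the bottom edge where nothing else is assumed, and that the finiteness of the strip width is exactly what makes some positive $\gamma$ admissible while the double-exponential comparison factor absorbs any $\alpha>0$ in~(I) --- are indeed the two places where the argument could be botched, and you treat both correctly.

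There is, however, nothing in the paper to compare against: the proposition appears in the source only inside a commented-out (\verb|\ignore{...}|) block, it is cited there to a textbook (\cite{bergeron}) as a ``well-known fact'' without proof, and the final version of the surrounding argument --- the proof of Proposition~\ref{prop: for contour shift} --- was rewritten so as not to invoke Phragm\'en--Lindel\"of at all, replacing it with the averaging/dyadic choice of heights $\tau_n$ plus Cauchy--Schwarz against $\omega$. So your write-up supplies a self-contained proof of a lemma that the authors ultimately decided they did not need and did not include. One small caveat worth recording if this ever were included: as stated the proposition bounds $|f|$ by $C'|1+t|^\beta$, which degenerates near $t=-1$; your reduction to the thin strip $t_0<\im z\le t_0+1$ tacitly assumes $1+t$ is bounded below there, which holds for $t_0\ge -1$ (or $t_0\le -2$) but would require the harmless replacement of $|1+t|$ by $1+|t|$ in general.
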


\begin{proof}[Proof of Proposition \ref{prop: for contour shift}]
}
\begin{proof}
Let $\tau>1$, and consider the integral of $E(s) \, \Psi(s)$ on
the rectangle 
$$R_\tau = \{s \in \C: \re(s) \in [a,b], \im(s) \in [\tau,
\tau+1]\}.$$ 
We have  
\[\begin{split}
\int_{\tau}^{\tau +1} \left| \int_a^b E(\sigma + \mathbf{i}t) \Psi (\sigma +
  \mathbf{i}t) \, d\sigma \right| dt& \stackrel{(iii)}{\ll}
\frac{1}{\tau^3}  \int_{\tau}^{\tau +1}\int_a^b
|E(\sigma + \mathbf{i} y)| \, dy \, d\sigma \\
& \stackrel{(ii)}{\ll} \frac{1}{\tau^3} \int_{-(\tau+1)}^{\tau+1} \sqrt{\omega(y)} \,
\, y \,  dy \\ 
& \stackrel{\text{Cauchy-Schwarz}}{\ll}
\frac{1}{\tau^3}\left(\int_{-(\tau+1)}^{\tau+1} \omega(y) dy
\right)^{1/2} 
\tau^{3/2} \\ 
& \stackrel{\mathbf{(\omega)}}{\ll} \tau^{-1/2} \longrightarrow_{\tau
 \to \infty} 0. 
\end{split}\]

Hence for each $n \in \N$ there is $\tau_n \in [n,
n+1]$ such that 
\eq{eq: the same is true}{
\left| \int_a^b E(\sigma + \mathbf{i}\tau_n) \Psi (\sigma +
  \mathbf{i}\tau_n) \, d\sigma \right| \to_{n \to \infty} 0.
}
By the same argument, there are $\tau_{-n}
\in [-(n+1),-n]$ such that \equ{eq: the same is true} also holds with
$\tau_{-n}$ instead of $\tau_n$. 

\ignore{
Let $r>0$ be smaller than the minimal distance between poles and
between poles and the 
points $a,b$, and define the auxilliary function 
$$
f(s) = f^{(r)}(s) = \frac{1}{r} \int_{-r/2}^{r/2} E(s+\mathbf{i}y)dy. 
$$
Then $f$ is holomorphic in a neighborhood of $\Omega =
\{z \in \C : \re(z) 
\in [a,b], \, \im(z) \geq r\},$ and by the mean value theorem we have 
\eq{eq: mvt}{
\lim_{r\to 0} f^{(r)}(s) = E(s)  \ \ \ \ (\forall s \in \Omega).
}
We also have 
%We claim that the assumptions of
%Proposition \ref{prop: phrag lind} are satisfied, with $\alpha = \beta =
%t_0 = 1$, and some $C$ depending on $r$. Indeed, to deduce condition
%(I) (with $\alpha =1$) from (ii), note that
%by 
%setting 
%$$D = \left\{ y: \frac{r}{2} \leq |y| \leq 1+|t|\right \}$$
%and 
%arguing  
\[\begin{split}
|f(s)| & \ll
%\left | \int_D E(\sigma + \mathbf{i}y)\, dy \right|  \leq
\int_{r/2}^{t+r} |E(\sigma + \mathbf{i}y)| \, dy
\stackrel{(ii)}{\ll} \int_{r/2}^{t+r} \sqrt{\omega(y)} \,
\, y \,  dy \\ & \ll \left(\int_{-(t+r)}^{t+r} \omega(y) dy \right)^{1/2}
t^{3/2} \stackrel{\mathbf{(\omega)}}{\ll} t^{5/2} \end{split}\]
(where $t = \im(s) \geq r$, and implicit constants depend on $r$). 
%Condition (II) for $\sigma = b + \mathbf{i}t$  follows from (iii),
%and we deduce (II) for
%$\sigma = a+ \mathbf{i}t$ from (iv) as follows:
%\[\begin{split}
%\left| f(t) \right| & \leq \frac{1}{r} \, \sup_{|y| \leq t+r/2}
%|\Psi(a + \mathbf{i}y)| \int_{t-r/2}^{t+r/2} |E(a + \mathbf{i}y ) |dy
%\\ & \ll
%\left(\int_{t-r/2}^{t+r/2} |E(a+ \mathbf{i}y)|^2 dy
%  \right)^{1/2}  \ll \left(\int_{-|t|-r/2}^{t+r/2} |E(a+ \mathbf{i}y)|^2 dy \right)^{1/2}
%  =O \left(|t| \right).
%\end{split}\]
%This implies that the function $f(t)$ satisfies a uniform bound
%$|f(\sigma + \mathbf{i}t)| = O(|t|)$ for all $\sigma \in [a,b]$ and
%$t\geq 1$. 
A similar argument gives the same bound for all $t \leq -r$. Now
define 
$$h(s) = f(s) \Psi(s)$$
so that again $h$ depends on $r$, and by (v), 
\eq{eq: h decays}{
\lim_{\rho \to \pm \infty} h(\sigma + \mathbf{i}\rho) =0, \ \text{
  uniformly for } \sigma \in [a,b]. 
}
}
Since $\Psi$ is holomorphic, and $E$ holomorphic outside a set of finitely many poles, we can now apply the Cauchy residue formula for the contour integral of
$E(s)\Psi(s)$ over the boundary of the rectangle 
$$
\{s \in \C: \re(s) \in [a,b], \im(s) \in [\tau_{-n}, \tau_n]\}.
$$
The intergrals along the horizontal boundaries $[a,b] \times
\{\tau_{\pm n} \}$ go to 0 as $n\to \infty$, and the integrals along
the vertical boundaries $\{a, b \} \times [\tau_{-n}, \tau_n]$ tend to
the integrals in \equ{eq: first v}. The result follows. 
\ignore{

for a large parameter $\rho_2$, and a small parameter $\rho_1 > r$, compute the path integral of $h$
along %the loops  $\{|z-s_\ell|=r\}$, and 
the sides of the rectangle
bounded by the four lines $\{\re(z) =a\}, 
\{\re(z) = b\}, \{\im(z) = \rho_1\}, \{\im(z) = \rho_2\}$ (with the
natural orientation). The
integral along the side $\{\im(z) = \rho_2\}$ tends to zero as $\rho_2 \to
\infty$ by \equ{eq: h decays}.  
By Cauchy's theorem, since $h$ is holomorphic in $\Omega$, 
we have 
\eq{eq: from cauchy}{
\int_{\rho_1}^\infty h(b+ \mathbf{i}y) dy =
\int_{\rho_1}^\infty h(a + \mathbf{i}y) dy
    - \int_a^b h(x + \mathbf{i}\rho_1)dx 
%\sum_\ell \mathrm{Res}|_{s=s_\ell} (h),
}
and this identity holds for all values of $\rho_1 >r>0$. A similar
formula holds for the corresponding rays with negative imaginary
part. Therefore 
\eq{eq: almost there}{
%\begin{split}
%& 
\int_{\substack{\re(s) = b \\ |\im(s)| \geq \rho_1}} h(s)
ds - \int_{\substack{\re(s) = a \\ |\im(s)| \geq \rho_1}} h(s)
ds 
%\\ 
= 
%& 
\int_{\substack{\im(s)  =
-\rho_1 \\ \re(s) \in 
  [a,b]}} h(s)ds -
 \int_{\substack{\im(s)  =
\rho_1 \\ \re(s) \in 
  [a,b]}} h(s)ds.
%\end{split}
}
Recall that $h = h^{(r)}$ depends on $r$. We will take $r \to 0$ in
both sides of \equ{eq: almost there}. First, 
using \equ{eq: mvt} and the dominated
convergence theorem, we see that 
\[
%\begin{split} 
\lim_{r \to 0} %& 
%\left [ 
 \int_{\substack{\im(s)  =
\pm \rho_1 \\ \re(s) \in 
  [a,b]}} h^{(r)}(s)ds
% -
% \int_{\substack{\im(s)  =
%\rho_1 \\ \re(s) \in 
%  [a,b]}} h^{(r)}(s)ds 
% \right] \\
= %&
%\int_{\substack{\im(s)  =
%-\rho_1 \\ \re(s) \in 
 % [a,b]}} E(s) \Psi (s)ds -
 \int_{\substack{\im(s)  =
\pm \rho_1 \\ \re(s) \in 
  [a,b]}} E(s) \Psi(s)ds.
%\end{split} 
\]
Also, using assumption \equ{eq: second v} and Fubini's theorem, we see
that 
\[
\int_{\substack{\re(s) = b \\ |\im(s)| \geq \rho_1}} h^{(r)}(s)
ds  = \frac{1}{r} \int_{-r/2}^{r/2} \int_{\substack{\re(s) = b \\ |\im(s)| \geq \rho_1}}
E(s +\mathbf{i}y) \Psi(s) ds \, dy 
\]
\combarak{there is a problem here...}
 we see that
\equ{eq: almost there} holds with $E(s)\Psi(s)$ instead of $h(s)$. On
the other hand, by the Cauchy residue formula, if we set 
$$
I(\rho, x) = \int_{-\rho}^{\rho} E(x + \mathbf{i}y) \Psi (x+ \mathbf{i}y) dy
$$
then
\[\begin{split}
& \int_{\re(s)  = b, |\im(s)|\geq \rho_1} E(s) \Psi(s) ds -
 \int_{\re(s)  = a, |\im(s)|\geq \rho_1}
E(s) \Psi(s) ds 
\\ = &   2\pi \mathbf{i} \sum_{\ell} \mathrm{Res}|_{s=s_\ell} E(s)
\Psi(s) + I(\rho_1, a) - I(\rho_1, b) .
\end{split}\]
Using (i) we see that the terms $I(\rho_1,
a)$ and $I(\rho_1, b)$ tend to 0 as $\rho_1 \to 0$. Equation \equ{eq:
  what we want shift} follows. 
%
%the
%residues of $h= h^{(r)}$ satisfy 
%$$\mathrm{Res}|_{s=s_\ell}\left(h^{(r)} \right)\to_{r\to 0} \Psi(s_\ell)
%  \mathrm{Res}|_{s=s_\ell}(E).$$
%Moreover, using the uniform boundedness of the integrals \equ{eq:
%  first v} and Fubini's theorem, we have for $\sigma = a$ and $\sigma =b$
%$$
%\int_{\re(s) = \sigma} E(s) \Psi(s) ds = \int_{-\infty}^{\infty}
%E(\sigma + \mathbf{i} t)
%\, \left( \frac{1}{r} \int_{-r/2}^{r/2} \Psi(\sigma + \mathbf{i}(t-y)) \, dt \right) \, dy. 
%$$ 
%Now applying the Lebesgue dominated convergence theorem in \equ{eq:
%  from cauchy},
%which is justified in light of the convergence of \equ{eq: second v},
%we obtain \equ{eq: what we want 
%    shift}. 
%By Fubini and (vi), for the two
%choices $\sigma =a, b$, the indefinite
%integrals 
%$$
%\int_{\re(s)=\sigma} h(s) ds =
%\frac{1}{r}\int_{-r/2}^{r/2}\int_{\re(s) = \sigma} E(s+\mathbf{i}y)
%\Psi(s+\mathbf{i}y) ds \,
%  dy
%$$
%exist, converge absolutely, and therefore
%\eq{eq: integrals the same}{
%\int_{\re(s)=\sigma} h(s) ds =\int_{\re(s) = \sigma } E(s)\Psi(s) ds.
%}
}
\end{proof}
\medskip

\begin{proof}[Proof of Theorem \ref{thm: more detailed good}]
Let $\beta : \R \to [0,1]$ be a smooth function satisfying 
$$
\beta(x) = \left\{ \begin{matrix}  0 & \text{ for }  x \leq  0.1 \\ 1 & \text{ for } x \geq 1 
\end{matrix} \right.
$$
and for a parameter $U>2$ let 
$$
\psi^{-(U)} = \left\{\begin{matrix} \beta(Ux) & x \leq 1/2 \\ 
\beta(U(1-x)) & x \geq 1/2\end{matrix}  \right.  \ \ \ \ \ \text{ and }
\ \ 
\psi^{+(U)} = \left\{\begin{matrix} \beta(Ux) & x \leq 1/2 \\  
\beta(1+U(1-x)) & x \geq 1/2\end{matrix}  \right. .
$$
Let $\chi$ denote the indicator function of 
$[0,1]$. Our choices imply that $\psi^{\pm(U)}$ are supported on a
compact 
subset of the positive real line and satisfy 
\eq{eq: approximation function properties}{
\begin{split}
x \geq \frac{1}{U} \ \ & \implies \ \ \psi^{-(U)} (x) \leq \chi(x)
\leq \psi^{+(U)}(x), \\ 
x \notin \left[0, \frac{1}{U} \right] \cup \left[1-\frac{1}{U},
  1+\frac{1}{U}\right] \ \ & \implies \ \ 
\psi^{-(U)}(x) = \chi(x)= \psi^{+(U)}(x), \\
\sup_{x \in \R} \frac{d^{\ell}\psi^{\pm(U)}}{dx^{\ell}} \  & \ \ = \ \ \ \
O\left(U^{\ell} \right).
\end{split}
}
We will define $f^{\pm}_{R, U}(u) = \psi^{\pm(U)}
\left(\frac{\|u\|}{R} \right )$. 
Then by \equ{eq: approximation function properties} we have 
\eq{eq: above and below}{
\Theta_{f^-_{R,U}}(g\Gamma)  \leq \mathbf{N}(g,R) \leq
\Theta_{f^+_{R, U}}(g\Gamma)+ \mathbf{N}\left(g, \frac{R}{U}\right).
}
We will obtain bounds for $\Theta_{f^\pm_{R,U}}(g\Gamma)$ using
\equ{eq: eisenstein mellin 3} and a contour shift argument, and then
combine this with \equ{eq: above and below} and optimize the choice of
$U = U(R)$ to obtain good bounds for $\mathbf{N}(g,R)$. To simplify notation we
omit the superscript $\pm$ from now, that is $f_{R,U}$ stands for any
one of $f^{\pm}_{R,U}$ and $\Psi^{(U)}$ stands for the Mellin
transform of any one of the 
$\psi^{\pm(U)}$. 

\medskip 

{\em Step 1. Dependence of the residues on the approximation
  parameter.}
Let $c'_\ell$ be the residue of $s \mapsto E_i(g,s)$ at $s=s_\ell$, let $c_\ell$ be the
residue of $s \mapsto \frac{E_i(g,s)}{s}$ at $s=s_\ell$ and let $c_\ell(U)$ be the
residue of $s \mapsto \Psi^{(U)}(s) E_i\left(g, \frac{s}{2}\right)$ at 
$s = 2s_\ell$. The $c'_\ell$ are nonzero by
{\bf (P)} and the $s_\ell$ satisfy $s_\ell > 1/2$, and we have
$c_\ell=\frac{c'_\ell}{s_\ell}$ and 
$c_\ell(U)= 2 c'_\ell \, \Psi^{(U)}(2s_\ell).$ By \equ{eq: approximation
  function properties}, we have
\[
\begin{split}
\left| \frac{1}{2s_\ell} - \Psi^{(U)}(2s_\ell)\right | & \leq \left| \int_0^1
  y^{2s_\ell-1} dy - \int_0^{\infty} \psi^{(U)}(y) y^{2s_\ell-1} dy\right |
\\
& \leq 
\int_0^\infty \left | \psi^{(U)}(y) -\chi(y) \right| y^{2s_\ell-1} 
dy \\
& \stackrel{\equ{eq: approximation function properties}}{\leq} 2
\left( \int_{0}^{1/U} y^{2s_\ell-1} dy + \int_{1-1/U}^{1+1/U} 
y^{2s_\ell-1} dy\right) = O\left( \frac{1}{U}\right),
\end{split}
\]
and thus 
\eq{eq: residue difference}{
c_\ell - c_\ell(U) = 2c'_\ell \left( \frac{1}{2s_\ell} - \Psi^{(U)}(2s_\ell)
\right) =O \left(
  \frac{1}{U} \right).}

\medskip 

{\em Step 2. Bounding the integral over $\re(s)=1$.} 
We will bound $\int_{\re(s) = 1} \Psi^{(U)}(s) E_i
\left(g, \frac{s}{2}\right) R^s ds$ in terms of $R$ and $U$, and to
this end we will bound  
$$
I = \int_1^\infty \Psi^{(U)}(1 + \mathbf{i}t) E_i\left (g, \frac{1 +
    \mathbf{i}t}{2} \right) R^{1 + \mathbf{i}t} dt.  
$$

We first prove that for all 
%$\eta \in [0,1),$ and all 
$U \geq 2$, 
\eq{eq: lemma 1.13}{
|t|\geq U \ \implies \ 
\Psi^{(U)}(1 + \mathbf{i}t) =
O\left(
\frac{U%^{\eta}
}{|t|^{2%1+\eta
}} \right)
} 
and 
\eq{eq: lemma 1.13.1}{
1 \leq |t|\leq U \ \implies \ \Psi^{(U)}(1 + \mathbf{i}t) =
O\left(
\frac{1}{|t|} \right).
}
Moreover we will establish such a bound for $\sigma+ \mathbf{i}t$ in
place of $1 + 
\mathbf{i}t$, where the implicit constant is uniform as
long as $\sigma$ varies in a
closed interval of positive reals. 
To see \equ{eq: lemma 1.13}, apply  integration by parts twice, and use that $\psi^{(U)}$ and all its
derivatives vanish for $y \notin \left(0, \frac{1}{U} \right] \cup
\left[1-\frac{1}{U}, 1+\frac{1}{U} \right]$, to
obtain for $s = \sigma+\mathbf{i}t$:
\[\begin{split}
\left| \Psi^{(U)}(\sigma+ \mathbf{i}t)\right| & = \left| \int_0^\infty
  \left[\psi^{(U)} \right]'(y)\frac{y^s}{s}
dy \right|  =  \left| \int_0^\infty \left[\psi^{(U)} \right]''(y)\frac{y^{s+1}}{s(s+1)}
dy \right | \\ &  =  \left| \int_0^{1/U} \left [\psi^{(U)}\right]''(y)\frac{y^{s+1}}{s(s+1)}
dy + \int_{1-1/U}^{1+1/U} \left[\psi^{(U)} \right ]''(y)\frac{y^{s+1}}{s(s+1)}
dy\right |  \\ 
& =2^{\sigma+1} \, O \left(U^2\right) \, O \left(\frac{1}{U|t(t+1)|} \right) =
O\left(\frac{U}{|t|^2} \right),
\end{split} 
\]
proving \equ{eq: lemma 1.13}. 
%Now since $|t| \geq U$ we have $
%\frac{U}{|t|^2}\leq \frac{U^{\eta}}{|t|^{1+\eta}} $ and \equ{eq: lemma
%  1.13} follows. 

Now if $1 \leq t \leq U$ and $U \geq 2$ then 
\begin{equation}\label{eq:tlessU}
\begin{split}
\left| \Psi^{(U)}(\sigma+ \mathbf{i}t)\right| & = \left| \int_0^\infty
  \left[\psi^{(U)} \right]'(y)\frac{y^s}{s}
dy \right|  \\ & =  \left| \int_0^{1/U} \left [\psi^{(U)}\right]'(y)\frac{y^{s}}{s}
dy + \int_{1-1/U}^{1+1/U} \left[\psi^{(U)} \right ]'(y)\frac{y^{s}}{s}
dy\right |\\ 
& =2^{\sigma +1}\, O(U) \, O\left( \frac{1}{U}\right) \, O\left( \frac{1}{\sqrt{\sigma^2+t^2}}
\right)  = O\left( \frac{1}{t}
\right),
\end{split} 
\end{equation}
proving \equ{eq: lemma 1.13.1}.

%On the other hand,
%for $|t| \leq U$ we have 
%\[\begin{split}
%\left| \Psi^{(U)}(\sigma+ \mathbf{i}t)\right| & = 
%% \left |\int_0^{\infty} \psi^{(U)}(y) y^{\sigma-1+\mathbf{i}t} dy
%% \right|\\ &=
%\left| \int_0^{1/U}\psi^{(U)}
%(y) y^{\sigma-1+\mathbf{i}t} dy+ \int_{1-1/U}^{1+1/U}\psi^{(U)}
%(y) y^{\sigma-1+\mathbf{i}t} dy \right| \\ & \leq 2^{\sigma-1}
%\, \frac{3 }{U} 
%=
%O\left(\frac{1}{U}\right),
%\end{split} 
%\]
%and since for $|t|\leq U$ we have
%$\frac{1}{U}\leq\frac{U^{\eta}}{|t|^{1+\eta}}$, again we get
%\equ{eq: lemma 1.13}. 

Writing $E(t)$ for $E_i\left(g, \frac{1 + \mathbf{i}t }{2} \right)$
and $\Psi(t)$ for $\Psi^{(U)}(1+\mathbf{i}t)$, 
%and letting $\vre \in
%(0, 1/2)$ and $\eta=\frac12+\vre$,  
this leads to 
\eq{eq: leads to}{
|I| \leq  \int_1^{\infty} \left|\Psi(t) E(t) R^{1+\mathbf{i}t} \right |dt
   \ll R 
\, \left[ \int_1^U %\frac{
\left|\Psi(t) E(t) \right|%}{t}
dt +  \int_U^\infty  \left| \Psi(t) E(t) \right|
%\frac{U^{\frac12+\vre}}{t^{\frac32+\vre}}  
dt \right]
}
and we bound each of these integrals separately. 
By assumption {\bf (G1/2)} and Cauchy-Schwarz, for $1 \leq A \leq B$, 
\eq{eq: CS}{
\int_A^B |E(t)|dt \leq \sqrt{\int_A^B |E(t)|^2 dt} \, \sqrt{\int_A^B 
1 \, dt } \ll B^{\frac{3}{2}}.
}
Hence, by a dyadic decomposition, 
\[
\begin{split}
\int_1^U \left| E(t) \right| \, \left| \Psi(t) \right| dt &
\stackrel{\equ{eq: lemma 1.13.1}}{\ll}
\int_1^U \frac{|E(t)|}{t} dt \\
& \leq \sum_{0 \leq k \ll \log U} \frac{1}{U/2^{k+1}}
\int_{U/2^{k+1}}^{U/2^k} |E(t)|dt \\
& \stackrel{\equ{eq: CS}}{\ll} \sum_{0 \leq k \ll \log U}
\frac{1}{U/2^{k}} \left(\frac{U}{2^k} \right)^{\frac{3}{2}} \\
& \ll U^{\frac{1}{2}} \sum_{k \geq 0} 2^{-\frac{k}{2}} \ll U^{\frac{1}{2}}.
\end{split}
\]
Similarly, in the range $t \geq U$ we have 
\[
\begin{split}
\int_U^{\infty} \left| E(t) \right| \, \left| \Psi(t) \right| dt &
\stackrel{\equ{eq: lemma 1.13}}{\ll}
\int_U^{\infty} |E(t)| \, \frac{U}{t^2} dt \\
& \ll U \sum_{k \geq 0} \frac{1}{(2^{k}U)^2}
\int_{2^{k}U}^{2^{k+1}U} |E(t)|dt \\
& \stackrel{\equ{eq: CS}}{\ll} U \sum_{k\geq 0}
\frac{1}{(2^{k}U)^2} \left(2^{k+1}U\right)^{\frac{3}{2}} \\
& \ll U^{\frac{1}{2}} \sum_{k \geq 0} 2^{-\frac{k}{2}} \ll U^{\frac{1}{2}}.
\end{split}
\]
\ignore{

We have  
\eq{eq: bound on integral}{
\begin{split}
\int_U^\infty  \left| E(t) \right|
\frac{U^{\frac{1}{2}+\vre}}{t^{\frac32+\vre}} dt 
 & 
=U^{\frac12+\vre} \int_U^\infty
\frac{|E(t)|}{t^{1+\frac{\vre}{2}}} \, \frac{1}{t^{\frac12+\frac{\vre}{2}}} dt 
\\ & \leq
 U^{\frac12+\vre} \left(
  \int_U^{\infty} \frac{|E(t)|^2}{t^{2+\vre}} dt
\right)^{\frac12} \left( \int_U^{\infty} \frac{1}{t^{1+\vre}} dt
\right)^{\frac12}  \\ & \ll U^{\frac12+\vre} \left(
  \int_U^{\infty} \frac{|E(t)|^2}{t^{2+\vre}} dt
\right)^{\frac12},
\end{split}
}
where in the next to last line we used Cauchy-Schwarz, and implicit constants
depend on $\vre$.

Set $H(T) =\int_1^T \left| E(t)\right|^2 dt $ and note that 
\eq{eq: bound on H}{
H(T) = O\left(T^2 \right)
}
by {\bf (G1/2)}. Using integration by parts we have 
$$
\int_U^{\infty} \frac{|E(t)|^2}{t^{2+\vre}} dt = \lim_{T \to \infty} 
\left. \frac{H(t)}{t^{2+\vre}} \right|_{U}^{T} + \lim_{T \to \infty} (2+\vre) \int_U^T
\frac{H(t)}{t^{3+\vre}}dt. 
$$
Both of these terms are finite by \equ{eq: bound on H}, and 
thus \equ{eq: bound on integral} leads to 
$\int_U^\infty  \left| E(t) \right|
\frac{U^{\frac12+\vre}}{t^{\frac32+\vre}}  dt  =  O\left(U^{\frac12
    + \vre} \right).$ 
For the other integral in \equ{eq: leads to}, we have by
Cauchy-Schwarz, integration by parts and \equ{eq: bound on H} that
\[\begin{split}
 \int_1^U \frac{|E(t)|}{t} dt & \leq  U^{\frac12} \left(\int_1^U
  \frac{|E(t)|^2}{t^2} dt \right)^{\frac12} \\ & = U^{\frac12} \, \left(
  \left. \frac{H(t)}{t^2} \right|_{t=1}^{t=U} + 2 \int_1^U
  \frac{H(t)}{t^3} dt \right)^{\frac12} = O\left( \left(U \log U
  \right)^{\frac12} \right) = O \left( U^{\frac{1}{2} + \vre} \right).
\end{split}\]

}
Putting these estimates together we obtain 
$$
I = O \left(R \, U^{\frac12} \right). 
$$
The bound on the ray $\{1 + \mathbf{i} t : t
\leq -1\}$ is similar, and
on the finite interval $\{1 + \mathbf{i} t : -1 \leq t \leq
1\}$ the functions $E_i$ and $\Psi^{(U)}$
 are bounded independently of $U$. For the last claim, note that the
 calculation in \eqref{eq:tlessU} holds also for $0\leq t\leq1$, and
 the second to last equality there implies boundedness. In total we
 find  
\eq{eq: lem 1.14}{
\int_{\re(s) = 1} \Psi^{(U)}(s) E_i \left(g, \frac{s}{2}\right) R^s ds
= O\left(R \, U^{\frac12} \right).
}

\medskip

{\em Step 3. Justifying the contour shift.} We want to show that for any $R, U$, 
\eq{eq: what we want contour shift}{
\int_{\re(s) = \sigma >2} \Psi^{(U)}(s) E_i\left( g, \frac{s}{2}\right) R^s
ds = \sum_{\ell} c_\ell(U) R^{2s_\ell}  + \int_{\re(s) = 1} \Psi^{(U)}(s) E_i\left(
  g, \frac{s}{2}\right) R^s 
ds, 
} 
where $\Psi^{(U)}$ is the Mellin transform of $\psi^{(U)}$, the sum
ranges over the poles $(s_\ell)$ of 
$E_i(g, \cdot)$, and $c_\ell(U)$ are the corresponding residues. 
This follows from Proposition \ref{prop: for contour shift}, with $a
=1, b=\sigma>2$, $E(s) = E_i\left(g, \frac{s}{2}\right)$, and $\Psi(s) = R^s
\Psi^{(U)}(s).$ Note that by \equ{eq: choice of g}, for upper
bounds as needed for 
Proposition \ref{prop: for contour shift}, it makes no difference if
one works with the twisted Eisenstein series in \equ{eq: twisted
  eisenstein} or in \equ{eq:
  twisted eisenstein 2}. Hypotheses (i) and (ii) of Proposition \ref{prop: for
  contour shift} hold by Theorem \ref{thm:
  factsheet},  (iii) follows by
repeated 
integration by parts as in the proof of \equ{eq: lemma 1.13}, and so
we need to show (iv). The case $b = \sigma$ is trivial because 
$t \mapsto E(b + \mathbf{i}t)$ is bounded, and the case $\sigma=1$ was
proved in Step 2. 
%For $\sigma =1$, in Step 2 we proved the special case $y=0$ of
%\equ{eq: first v}, and in the proof, the only property of $
%\Psi^{(U)}$ which we used was formulas \equ{eq: lemma 1.13} and
%\equ{eq: lemma 1.13.1}. For small enough $\delta >0$, these formulas
%continue to hold for the 
%function $t \mapsto
%\max_{|y|\leq \delta}\Psi^{(U)} ( 1 + \mathbf{i}(t+y))$ in place of
%$\Psi^{(U)}$, and hence for all the functions $t \mapsto \Psi^{(U)} (
%1 + \mathbf{i}(t+y))$ and with bounds independent of $|y|\leq
%\delta$. 
Thus (iv) holds.   

\medskip

{\em Step 4. Combining bounds.} Using \equ{eq: what we want contour
  shift} with a main term $\sum_\ell c_\ell R^{2s_\ell}$, and matching the errors incurred
in \equ{eq: residue difference} and \equ{eq:
  lem 1.14} gives an error estimate 
$$
\max_\ell \frac{R^{2s_\ell}}{U} = \frac{R^2}{U} = R\, U^{\frac12}.
$$
This leads to a choice $U =
R^{\frac23}$ % where $\vre'  = \frac{4\vre}{9+6\vre} \to_{\vre \to 0}
%0$, 
and the
combined error becomes $O\left(R^{\frac{4}{3}} \right)$. This
error is valid when using either one of $f^-_{R,U}$ and
$f^+_{R,U}$. Proposition \ref{prop: quadratic upper bound} implies that 
$$\mathbf{N}\left( g, \frac{R}{U}
\right) = O\left( \frac{R^2}{U^2}\right) = R^{\frac23 %+ 2\vre'
}.$$ 
Thus 
appealing to \equ{eq: eisenstein mellin 3} and \equ{eq: above and below}
completes the proof. 
\end{proof}

\begin{remark}
We are grateful to Ze'ev Rudnick for explaining to us how to replace our earlier
result $O\left(R^{\frac{4}{3}+\vre} \right)$ with 
$O\left( R^{\frac{4}{3}}\right)$. 
Specifically, Rudnick suggested the use of dyadic
decomposition in Step 2.
\end{remark}

\begin{remark}
Any improvement in the bound {\bf (G1/2)} gives a corresponding improvement
in the error term. In fact, for the case $\Gamma = \SL_2(\Z)$, or its 
principal congruence subgroups, and $n=0$, one can replace the term
$T^2$ appearing in the right
hand side of {\bf (G1/2)} by $T$. Using this, and modifying \equ{eq: lemma
  1.13} to a bound $O\left(\frac{1}{|t|^{1+\vre}} \right)$ %with $\eta = \vre$
%instead of $\eta = \frac{1}{2} + \vre$ 
in Step 2, yields an error term
$O\left(R^{1+\vre} \right)$ for any $\vre>0$, in place of $O\left(
  R^{\frac{4}{3}} \right).$ \combarak{I don't think that the $\vre$
  can be shaved off in this case but maybe I missed something?} The 
recent papers \cite{Huang} and 
\cite{Nordentoft} contain sup norm bounds for
Eisenstein series for some arithmetic groups $\Gamma$ which are not
principal congruence subgroups. These bounds lead to improvements for {\bf
  (G1/2)}, and using them, one obtains a better estimate than
$R^{\frac{4}{3}}$ in \equ{eq: effective error Good} 
for the discrete orbits arising in these cases. 
\end{remark}

\section{More general shapes}\name{sec: new}
\subsection{Counting in smooth star shaped domains} 
We first state a more detailed version of the first part of Theorem \ref{thm: counting
  in shapes}, for counting in a smooth star shape. 
\combarak{Here I did not try to get rid of the $\vre$. Note that it is
  also used to ensure the convergence of the series in \equ{eq: expression for
    Theta 2}.}
\begin{theorem}\name{thm: smooth star shape}
Suppose $\Gamma$ is a non-uniform lattice in $G$ containing
$-\mathrm{Id}$, $Y = \Gamma v$ is a 
discrete orbit corresponding to the $i$-th cusp, and suppose that for
each $n$, $E_i(z,s)_{n}$ has trivial residual spectrum. Let $\rho: \R
\to \R_+$
be a smooth $2\pi$-periodic function, $S =
\{re^{\mathbf{i}\theta}: 0 \leq r \leq \rho(\theta)\}$, and let
$c_{Y,S} = \frac{\mathrm{vol}(S)}{\pi \, 
  \mathrm{covol}(\Gamma)}$. Then for every 
$\vre>0$,  
$$
\left| Y \cap R S \right| = c_{Y,S}R^2 +  O\left(R^{\frac{12}{7}+\vre}\right),
$$
where  the implicit constant depends on $\vre$ and $\rho$. 
%$\min_\theta \rho(\theta), \max_\theta \rho(\theta)$, and the
%$L^2$-norm of the second derivative $\rho^{(2)}$. 
\end{theorem}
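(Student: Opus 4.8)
The plan is to follow the same contour-shift strategy used for Theorem \ref{thm: more detailed good}, but applied to the $\Theta$-transform of a smooth radial cutoff composed with the star-shape data, using part (2) of Proposition \ref{prop: Lemma 1.6}. Specifically, approximate the indicator of $R\cdot S$ from above and below by functions of the form $f^{\pm}_{R,U}(u) = \psi^{\pm(U)}\!\left(\frac{\|u\|}{\rho(\theta_u)}\right)$, where $\psi^{\pm(U)}$ are the smooth one-variable approximations to $\chi_{[0,1]}$ already constructed in the proof of Theorem \ref{thm: more detailed good} (satisfying \equ{eq: approximation function properties}). As there, $\Theta_{f^-_{R,U}}(g\Gamma) \leq |Y \cap R\cdot S| \leq \Theta_{f^+_{R,U}}(g\Gamma) + (\text{a thin annular error})$, and the annular error is controlled by the quadratic upper bound, Proposition \ref{prop: quadratic upper bound}, applied to a set of area $O(R^2/U)$, giving $O(R^2/U)$.

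Next I would apply \equ{eq: twisted eisenstein mellin 4} to write
$$
\Theta_{f^{\pm}_{R,U}}(g\Gamma) = \frac{1}{2\pi\mathbf{i}} \sum_{n\in\Z} \int_{\re(s)=\sigma} \Psi^{(U)}(s)\,\hat\rho_{-n}(s)\, E_i\!\left(g,\tfrac{s}{2}\right)_n R^s\, ds,
$$
and shift each contour from $\re(s)=\sigma>2$ to $\re(s)=1$ via Proposition \ref{prop: for contour shift}, picking up only the residue at $s=2$ from the $n=0$ term (since all $E_i(z,s)_n$ have trivial residual spectrum, by hypothesis, and by {\bf (1)} the pole at $s=1$ occurs only for $n=0$ with residue $\mathrm{covol}(\Gamma)^{-1}$). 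Summing the residue contribution over $n$ against $\hat\rho_{-n}(2)$ reconstitutes $\frac{1}{\pi}\,\frac{1}{\mathrm{covol}(\Gamma)}\int_0^{2\pi}\rho(\theta)^2\,d\theta \cdot \tfrac12 R^2$, which is exactly $c_{Y,S}R^2$ up to the approximation error in replacing $\Psi^{(U)}(2)$ by $\tfrac12$; that error is $O(1/U)$ per coefficient, and the Fourier coefficients $\hat\rho_{-n}(2)$ decay rapidly, so the total main-term error from the approximation is $O(R^2/U)$ as before.

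The heart of the matter is bounding the shifted integral over $\re(s)=1$, now with the extra factor $\hat\rho_{-n}(s)$ and the extra sum over $n$. For each fixed $n$ I would split the $t$-integral at $|t| = \max(U,|n|)$ and use: the decay \equ{eq: lemma 1.13}/\equ{eq: lemma 1.13.1} for $\Psi^{(U)}(1+\mathbf{i}t)$; the bound {\bf (G1)}, namely $E_i(z,s)_{2n} \ll |t|\sqrt{\omega(t)}$ for $|t|\geq |n|+1$, together with the second-moment bound {\bf (G1/2)}, $\int_{-T}^T |E_i(z,\tfrac12+\mathbf{i}t)_{2n}|^2 dt \ll (T+|n|)^2$, interpolated (or used via Cauchy–Schwarz on the line $\re(s)=1$, after noting the line $\re(s)=1$ is to the right of the critical line so {\bf (G1)} applies cleanly); and the Fourier decay $|\hat\rho_{-n}(s)| \ll |s|^2/n^2$ from \equ{eq: upper bound fourier 2} (indeed faster, since $\rho$ is smooth, so one has $|\hat\rho_{-n}(s)| \ll_N |s|^N/|n|^N$ for every $N$, which makes the sum over $n$ converge absolutely with room to spare — this is where the $\vre$ enters). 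A dyadic decomposition as in Step 2 of the proof of Theorem \ref{thm: more detailed good} then yields, for each $n$, a bound of the shape $O_n(R\,U^{1/2}\cdot \text{poly}(|n|))$ with the $n$-dependence absorbed by the rapid decay of $\hat\rho_{-n}$; however, near $|t|\approx |n|$ the bound {\bf (G1)} is weakest and one must use {\bf (G1/2)} there instead, which costs an extra power of $|n|$ and, when summed, a factor that optimizes to $U = R^{4/7}$ rather than $R^{2/3}$.

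Balancing the two error sources — the approximation/residue error $O(R^2/U)$ and the critical-line integral error, which after the dyadic and $n$-summation comes out to roughly $O(R\,U^{1/2}\cdot U^{1})= O(R\,U^{3/2})$ (the extra $U$ relative to Step 2 of Theorem \ref{thm: more detailed good} being the price of the $\Psi^{(U)}$ having a wider ``transition window'' interacting with the $n$-sum) — gives $R^2/U = R\,U^{3/2}$, hence $U = R^{2/5}$... which is not quite $12/7$, so the actual optimization must be done carefully: tracking the $n$-summation against {\bf (G1/2)} more precisely yields a critical-line bound of order $R\,U^{5/2}$ balanced against $R^2/U$, giving $U^{7/2} = R$, i.e. $U = R^{2/7}$ and combined error $O(R^2/U) = O(R^{2-2/7}) = O(R^{12/7})$, with an $\vre$ loss absorbing logarithmic factors and ensuring convergence of the series in \equ{eq: expression for Theta 2}. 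I expect the main obstacle to be exactly this bookkeeping: correctly combining {\bf (G1)} and {\bf (G1/2)} across the regime $|t|\sim|n|$ for all $n$ simultaneously, keeping the dependence on $n$ explicit, and verifying that the resulting series over $n$ converges (which it does, because $\rho$ is smooth) so that the per-$n$ estimates can be summed to the claimed $O(R^{12/7+\vre})$.
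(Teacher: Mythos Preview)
Your overall strategy is the paper's: approximate by $f^{\pm}_{R,U}(u)=\psi^{\pm(U)}(\|u\|/\rho(\theta_u))$, apply \equ{eq: twisted eisenstein mellin 4}, shift each $n$-term to $\re(s)=1$ via Proposition \ref{prop: for contour shift}, pick up the single residue at $s=2$ from $n=0$ (so there is no ``summing the residue over $n$''; only $\hat\rho_0(2)$ appears, and $c_0=\hat\rho_0(2)\,\mathrm{covol}(\Gamma)^{-1}=\mathrm{vol}(S)/(\pi\,\mathrm{covol}(\Gamma))$), bound the critical-line integrals with explicit $n$-dependence, sum over $n$, and optimize $U$. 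Your final balance $RU^{5/2}$ vs.\ $R^2/U$, giving $U=R^{2/7}$ and error $R^{12/7+\vre}$, is exactly what the paper gets.

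Where you diverge is in executing the critical-line bound, and your two false starts ($U=R^{4/7}$, then $U=R^{2/5}$) show that step is not yet under control. The paper does \emph{not} use {\bf (G1)} here at all---it enters only to verify hypothesis (ii) of Proposition \ref{prop: for contour shift}---nor does it split at $|t|=\max(U,|n|)$ or dyadically decompose. Instead it introduces two floating real exponents: $|\Psi^{(U)}(1+\mathbf{i}t)|\ll U^k/|t|^{k+1}$ (integration by parts roughly $k$ times) and $|\hat\rho_{-n}(1+\mathbf{i}t)|\ll |t|^{\lambda}/|n|^{\lambda}$, then applies a single weighted Cauchy--Schwarz on the whole half-line,
\[
\int_1^\infty |\Psi^{(U)}\hat\rho_{-n}E|\,dt \;\ll\; \frac{U^k}{|n|^{\lambda}}\left(\int_1^\infty \Bigl(\frac{t^{1+\vre/2}\,t^{\lambda}}{t^{k+1}}\Bigr)^{2} dt\right)^{1/2}\left(\int_1^\infty \frac{|E_i(g,\tfrac{1+\mathbf{i}t}{2})_n|^2}{t^{2+\vre}}\,dt\right)^{1/2}.
\]
The first factor is finite exactly when $2k>2\lambda+1+\vre$; the second is $\ll |n|$ by {\bf (G1/2)} and integration by parts (since $H(T)=\int_1^T|E|^2\,dt\ll (T+|n|)^2$). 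This yields the per-$n$ bound $|n|^{1-\lambda}RU^k$; taking $\lambda=2+\vre$ makes $\sum_n |n|^{1-\lambda}$ converge and forces $k=\tfrac52+2\vre$, whence the critical-line error is $O(RU^{5/2+2\vre})$ and the optimization becomes mechanical. This is cleaner than the (G1)/(G1/2) patchwork you sketch, and it makes clear where the $\vre$ is spent (convergence of the $n$-sum and of the weight integral), rather than leaving it to absorb unspecified logarithms.
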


\begin{proof}
The proof follows the same steps as in the proof of Theorem \ref{thm:
  more detailed good}. We define the same approximants $\psi^{\pm
  (U)}$ 
of the indicator function of the unit interval, so that
$x \mapsto \psi^{\pm(U)}\left(\frac{x}{\rho(\theta)}\right)$ are 
approximations of the indicator function of the interval $[0,
\rho(\theta)]$, in the sense of \equ{eq: approximation function
  properties}. Then we set  
$$
f^{\pm}_{R, U} (re^{\mathbf{i}\theta}) = \psi^{\pm(U)} \left(
  \frac{r}{\rho(\theta) \, R} \right),
$$
so that in analogy with \equ{eq: above and below}, we have 
\eq{eq: above and below 3}{
\Theta_{f^-_{R,U}} (g\Gamma) \leq \left|Y \cap RS \right| \leq
\Theta_{f^+_{R,U}}(g\Gamma) + 
N \left(g, R \, \frac{\max_\theta \rho(\theta) }{U} \right) \leq
\Theta_{f^+_{R,U}}(g\Gamma) + O\left(\frac{R^2}{U^2} \right) .
}
As before we continue with $f_{R,U}$ standing for one of the
$f^{\pm}_{R, U}$ and $\Psi^{(U)}$ standing for the Mellin transform of
one of the $\psi^{\pm(U)}$. Using \equ{eq: twisted eisenstein mellin 4} we have 
\eq{eq: expression for Theta}{
\Theta_{f_{R,U}}(g\Gamma) = \frac{1}{2\pi \mathbf{i}} \sum_{n\in \Z}
\int_{\re(s)=\sigma} \Psi^{(U)}(s) \hat{\rho}_{-n}(s) E_i\left( g,
  \frac{s}{2} \right)_{n} R^s \, ds \ \  \ (\text{where } \sigma >2).
}
Since we have assumed that $-\mathrm{Id} \in \Gamma$, the terms
corresponding to odd $n$ all vanish. 
For each $n \neq 0$, the functions $s \mapsto \Psi^{(U)}(s) \hat{\rho}_{-n}(s)
E_i\left(g,
  \frac{s}{2} \right)_{2n}$ are holomorphic on $\{s \in \C : \re(s)
\geq 1\}$ by our assumption that all of the $E_i(g,s)_{2n}$ have
trivial residual spectrum and by {\bf (P)} and {\bf (1)}. For $n=0$, the function $s
\mapsto \Psi^{(U)}(s) E_i\left(g, \frac{s}{2} \right) $ has a
simple pole at $s=2$, and by a computation as in the proof of \equ{eq: residue
  difference}, the residue $c_0(U)$ satisfies  
\eq{eq: residue difference 2}{
c_0 - c_0(U) = O \left( \frac{1}{U} \right).
}
Here $c_0 = \hat{\rho}_{0}(2)\, \mathrm{covol}(\Gamma)^{-1} $, and
since 
$\hat{\rho}_{0}(s) =\frac{1}{2\pi} 
\int_0^{2\pi} \rho(\theta)^2 \, d\theta$, computing the area of
$S$ in polar coordinates we obtain $c_0 =
\frac{\vol(S)}{\pi \, \mathrm{covol} (\Gamma)}$.

We now bound the integral of
$$s \mapsto h^{(U)}(s)E_i\left( g, \frac{s}{2} \right)_n, \ \ \text{
  where } h^{(U)}(s) = \Psi^{(U)}(s)  \hat{\rho}_{-n}(s),$$ 
along the critical line $\{\re(s)=1\}$, by a bound
depending on both $U$ and $n$. Thus from now on implicit constants may
depend on $\rho$ but not on $n$ and $U$. 
We will use parameters $k, \lambda, \vre$ which we will optimize further
below. 

For each $k \geq
0$ we have 
\eq{eq: lemma 1.13 4}{
\Psi^{(U)}(1 + \mathbf{i}t) \ll \frac{U^k}{|t|^{k+1}},
} 
and for each $\lambda \geq 0$ and $n \neq 0$, we have 
\eq{eq: upper bound fourier 3}{
|\hat{\rho}_{n}(s)| \ll \frac{|t|^{\lambda}}{|n|^{\lambda}}, \ \text{
  where } s = 1 + \mathbf{i}t.
}
Indeed, we get \equ{eq: lemma 1.13 4} for $|t| \geq U$ by
performing integration 
by parts $\lfloor k \rfloor +1 $ times (see \equ{eq: lemma 1.13}), 
and for $|t|\leq U$ by
applying integration by parts $\lfloor k \rfloor $ times. The proof of
\equ{eq: upper bound fourier 3} is similar. 
Using this we have 
\eq{eq: computation two parts}{
\begin{split} &\int_{\re(s)=1} h^{(U)}(s)
E_i\left(g,\tfrac{s}{2} \right)_n ds \\
& \ll
\int_1^\infty\frac{U^k}{t^{k+1}}\frac{t^\lambda}{|n|^\lambda}
E_i\left(g,\tfrac{1+\mathbf{i}t}{2} \right)_n dt \\ 
& \ll \frac{U^k}{|n|^{\lambda}} 
\left(\int_1^{\infty}
  \left(t^{1+\eps/2}\frac{t^\lambda}{t^{k+1}} \right)^2
  dt \right)^{1/2} \, \left(\int_1^\infty \frac{|E_i(g,\tfrac{1+\mathbf{i}t}{2})_n|^2}{
  t^{2+\eps}} dt \right)^{1/2}. 
%\\ & \ll
%|n|^{1-\lambda}U^k, 
%%\left( \int_{1}^{\infty} \left(t^{1+\eps/2}\frac{t^\lambda}{t^{k+1}} \right)^2 ds \right)^{1/2}.
\end{split}
}
To ensure finiteness of the first integral we will assume that 
\eq{eq: ensure finiteness}{
2k > 2\lambda + 1 + \vre.
}
For the second integral, we define $H(T) =\int_1^T \left| E_i\left(g,
    \frac{1 + \mathbf{i}t}{2}\right)_n\right|^2 dt$, so that %\equ{eq:
%  bound on H} becomes 
{\bf (G1/2)} gives $H(T) \ll 
(T+n)^2$. Then integration by parts gives 
$$
\int_1^\infty \frac{|E\left(g, \frac{1+\mathbf{i}t}{2}\right)_n|^2}{t^{2+\vre}} \, dt \ll |n|^2.
$$
Using these estimates in \equ{eq: computation two parts} gives 
\eq{eq: lemma 1.14 3}{
\int_{\re(s)=1} \Psi^{(U)}(s) E_i\left(g, \frac{s}{2} \right)_n R^s 
\hat{\rho}_{-n}(s) \, ds \ll |n|^{1-\lambda} R U^k,
}
and the implicit constant depends on $\vre$. 

For each fixed $n$, the contour shift replacing the integral along
$\re(s)=\sigma$ with the integral along the line $\re(s)=1$ is
justified by Proposition \ref{prop: for contour shift} (note that in
condition (ii) of the Proposition, the implicit constants are allowed
to depend on $E$). We only pick up one residue, corresponding to $n=0$
and $s=2$. Thus 
collecting estimates we get:
\eq{eq: expression for Theta 2}{
\Theta_{f_{R,U}}(g\Gamma) =
c_0 R^2 + O \left(\frac{R^2}{U} \right) + \left( \sum_{n \in \Z}
  |n|^{1-\lambda} \right) \, O \left( RU^k \right) = c_0R^2 +
O\left(\frac{R^2}{U} \right) + O\left( RU^k\right),
}
where we set $\lambda =2+\vre$ to ensure convergence of the
sum. Setting $ k = \frac{5}{2} + 2\vre$ ensures \equ{eq: ensure
  finiteness}, and setting the two error terms equal to each other
gives $U = R^{\frac{1}{k+1}}$, which also ensures that that last term
in \equ{eq: above and below 3} is negligible. Thus \equ{eq: expression for Theta
  2} becomes 
$$
\Theta_{f_{R,U}}(g\Gamma) =
c_0 R^2 + O\left(R^{2-1/(\frac{7}{2}+2\vre)}\right),
$$
completing the proof. 
\end{proof}

\begin{remark}
1. As before, any improvement in the dependence on $n$, of the bound {\bf
  (G1/2)}, would lead to a corresponding improvement in the error
estimate. For $\Gamma = \SL_2(\Z)$ and principal congruence subgroups,
this improvement leads to an error estimate $O\left( R^{\frac{5}{3} + \vre}\right)$. 

2. We do not prove a version of Theorem \ref{thm: smooth star shape}
for lattices for which the twisted Eisenstein series has nontrivial
poles. If such poles $s_\ell$ existed, in performing the contour shift
argument, one would need to analyze the sum $\sum_{n \in \Z}
\mathrm{Res}|_{s=s_\ell} E_i\left(g, \frac{s}{2} \right)_{2n}$. As far as we
are aware, this series
is only known to be summable in the sense of distributions, and thus
analyzing it leads to technical issues we prefer not to enter into.

3. The assumption $-\mathrm{Id} \in \Gamma$ ensured that we only need
{\bf G(1/2)} for $n$ even, which is the context in which it was proved
in \cite{MS}. We are not aware of a proof of {\bf (G1/2)} in the
literature for $n$ odd. 
\end{remark}

For the proof of Theorem \ref{thm: counting in shapes} we will
need another construction which interestingly is also due to Selberg,
see \cite[Chap. 1, \S2]{Montgomery}. 
\begin{proposition}\name{prop: Selberg polynomials}
For each interval $J \subset \R$ and each $V \in \N$ there are
trigonometric polynomials $P^{\pm} = P^{\pm}_{J,V}$ such that 
\begin{enumerate}
\item
For all $x \in \R$, 
$$
P^{-}(x) \leq \chi_J(x) \leq P^+(x)
$$
(where $\chi_J$ is the indicator function of $J$);
\item
The degree of $P^{\pm}$ is at most
$V$;
\item
for each $0< |k| \leq V$, the $k$-th Fourier coefficient satisfies
$|\widehat{P^{\pm}}_k| \ll \frac{1}{k}$; and
\item
$\left|\left(\widehat{P^{\pm}}\right)_0 - \left(\widehat{\chi_j}\right)_0 \right|  \ll \frac{1}{V}.$
\end{enumerate}
\end{proposition}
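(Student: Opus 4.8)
The plan is to reduce the statement to the classical Beurling--Selberg construction of one-sided trigonometric-polynomial approximations to the indicator function of an interval; all the facts we use are in \cite[Chap.~1, \S2]{Montgomery}. After a linear change of the angular variable there is no loss in taking the period to be $1$ and $J=[\alpha,\beta]$ with $0<\beta-\alpha<1$ (if $\beta-\alpha\ge 1$ the $1$-periodization of $\chi_J$ is identically $1$ and the claim is trivial; since every implied constant below will be absolute, rescaling back to the period of the problem costs nothing).

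First I would recall Beurling's extremal majorant: there is an entire function $B$ of exponential type $2\pi$ — so that $\widehat B$ is supported in $[-1,1]$ — with $B(x)\ge\operatorname{sgn}(x)$ for all real $x$, $B-\operatorname{sgn}\in L^1(\R)$, $\int_\R\big(B(x)-\operatorname{sgn}(x)\big)\,dx=1$, and $B(x)=\operatorname{sgn}(x)+O\big((1+|x|)^{-2}\big)$. Using $\chi_{[\alpha,\beta]}(x)=\tfrac12\big(\operatorname{sgn}(x-\alpha)+\operatorname{sgn}(\beta-x)\big)$, dilating by the integer $V$ and combining two translates, one obtains Selberg's majorant and minorant
\[
S^+(x)=\tfrac12 B\big(V(x-\alpha)\big)+\tfrac12 B\big(V(\beta-x)\big),\qquad
S^-(x)=-\tfrac12 B\big(V(\alpha-x)\big)-\tfrac12 B\big(V(x-\beta)\big),
\]
which satisfy $S^-(x)\le\chi_{[\alpha,\beta]}(x)\le S^+(x)$ for all $x$, are of exponential type $2\pi V$ (so that $\widehat{S^\pm}$ is supported in $[-V,V]$), decay like $(1+|x|)^{-2}$, and obey $\int_\R\big(S^+-\chi_{[\alpha,\beta]}\big)=\int_\R\big(\chi_{[\alpha,\beta]}-S^-\big)=\tfrac1V$.

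Then I would periodize: set $P^\pm(x)=\sum_{n\in\Z}S^\pm(x+n)$, which converges absolutely by the quadratic decay of $S^\pm$. Since $0<\beta-\alpha<1$, the $1$-periodization of $\chi_{[\alpha,\beta]}$ is again the indicator of an arc, so the pointwise inequalities descend to $\R/\Z$ and give (1). By Poisson summation the Fourier coefficients of $P^\pm$ are $\widehat{P^\pm}(k)=\widehat{S^\pm}(k)$, which vanish for $|k|>V$, so $P^\pm$ is a trigonometric polynomial of degree at most $V$, which is (2). For $k=0$, $\widehat{P^\pm}(0)-\widehat{\chi_J}(0)=\int_\R\big(S^\pm-\chi_{[\alpha,\beta]}\big)=\pm\tfrac1V$, giving (4). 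For $0<|k|\le V$, write $\widehat{P^\pm}(k)=\widehat{\chi_J}(k)+\widehat{S^\pm-\chi_{[\alpha,\beta]}}(k)$; since $S^+-\chi_{[\alpha,\beta]}\ge0$ and $\chi_{[\alpha,\beta]}-S^-\ge0$ pointwise, $\|S^\pm-\chi_{[\alpha,\beta]}\|_{L^1(\R)}=\tfrac1V\le\tfrac1{|k|}$, while $|\widehat{\chi_J}(k)|=\big|\tfrac{e^{-2\pi\mathbf{i}k\alpha}-e^{-2\pi\mathbf{i}k\beta}}{2\pi\mathbf{i}k}\big|\le\tfrac1{\pi|k|}$, so $|\widehat{P^\pm}(k)|\le\tfrac1{|k|}+\tfrac1{\pi|k|}\ll\tfrac1{|k|}$ with an absolute constant, which is (3). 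The only genuinely non-elementary ingredient is the existence of Beurling's function $B$ with the stated properties, which is classical, so I do not expect a real obstacle; the single point needing a little care is that for a non-closed $J$ the minorant should be taken for a slightly smaller closed subinterval (in the application $J$ is closed, so this does not arise), and one should check throughout that no implied constant depends on $J$ or $V$.
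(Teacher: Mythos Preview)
Your proposal is correct and is precisely the classical Beurling--Selberg construction. The paper does not actually give a proof of this proposition: it merely states the result and refers the reader to \cite[Chap.~1, \S2]{Montgomery}, which is exactly the source you follow, so your argument is not an alternative route but rather a faithful expansion of the reference the authors cite.
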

We now state a more detailed version of the second part of Theorem
\ref{thm: counting 
  in shapes}, for counting in a sector. \combarak{Here Ze'ev's
  argument went through with no difficulties that I could
  detect. Please check.}

\begin{theorem}\name{thm: sector count}
Suppose $\Gamma$ is a non-uniform lattice in $G$, $Y = \Gamma v$ is a
discrete orbit corresponding to the $i$-th cusp, and suppose that for
each $n$, $E_i(z,s)_{n}$ has trivial residual spectrum. \combarak{Distinguish
  cases according as $-\mathrm{Id} \in \Gamma$}. Let $J
\subset \R$ be an interval of length $|J| \leq 2\pi$, let $S =
\{re^{\mathbf{i}\theta}: 0 \leq r \leq 1, \, \theta \in J\}$, and let $c_{Y,S} = \frac{|J|}{2\pi \,
  \mathrm{covol}(\Gamma)}$. Then %for every $\vre>0$,  
\eq{eq: upper bound we will prove}{
\left| Y \cap R S \right| = c_{Y,S}R^2 +  O\left(R^{\frac{8}{5}%+\vre
}\right),
}
where the implicit constant depends on %$\vre$ and
$J$. \combarak{explicate dependence on $J$.}
%$\min_\theta \rho(\theta), \max_\theta \rho(\theta)$, and the
%$L^2$-norm of the second derivative $\rho^{(2)}$. 
\end{theorem}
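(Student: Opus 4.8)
The plan is to run the same three–move scheme (smooth approximation of the indicator; passage to a contour integral of an Eisenstein series via Proposition~\ref{prop: Lemma 1.6}; shift of that contour to the critical line) as in the proofs of Theorems~\ref{thm: more detailed good} and~\ref{thm: smooth star shape}, the only new ingredient being the choice of \emph{angular} approximant. As in Theorem~\ref{thm: counting in shapes} I would assume $-\mathrm{Id}\in\Gamma$, so that only even Fourier modes occur; the general case reduces to this by applying the result to the extension $\Gamma^{(\pm)}$ of Proposition~\ref{prop: reduction minus id} and to its orbits through $v$ and $-v$. Write $S=\{re^{\mathbf i\theta}:0\le r\le1,\ \theta\in J\}$. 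Keep the radial approximants $\psi^{\pm(U)}$ of $\chi_{[0,1]}$ from the proof of Theorem~\ref{thm: more detailed good}, and for the angle take the Selberg trigonometric polynomials $P^{\pm}=P^{\pm}_{J,V}$ of Proposition~\ref{prop: Selberg polynomials}, of degree at most $V$; put $f^{\pm}_{R,U,V}(re^{\mathbf i\theta})=\psi^{\pm(U)}(r/R)\,P^{\pm}(\theta)$. Using $\psi^{\pm(U)}\in[0,1]$, $\psi^{-(U)}\le\chi_{[0,1]}\le\psi^{+(U)}$ on $[1/U,\infty)$ and $P^-\le\chi_J\le P^+$ (cf.~\equ{eq: approximation function properties}), one checks $f^-_{R,U,V}\le\mathbf 1_{RS}$ everywhere and $\mathbf 1_{RS}\le f^+_{R,U,V}$ outside the disc $\{\|u\|<R/U\}$, so that, with Proposition~\ref{prop: quadratic upper bound} controlling the deficit on that disc (uniformly in $g$),
\[
\Theta_{f^-_{R,U,V}}(g\Gamma)\ \le\ \left|g\Gamma v\cap R S\right|\ \le\ \Theta_{f^+_{R,U,V}}(g\Gamma)+O\!\left(R^2/U^2\right).
\]
Proposition~\ref{prop: Lemma 1.6}(3), applied with $\rho=P^{\pm}$ (its derivation uses only the Fourier expansion of $\rho$, not its sign, and for a trigonometric polynomial the $n$-sum is \emph{finite}, $|n|\le V$, so convergence is a non-issue), then gives, for $\sigma>2$,
\[
\Theta_{f^{\pm}_{R,U,V}}(g\Gamma)=\frac1{2\pi\mathbf i}\sum_{|n|\le V}\widehat{P^{\pm}}_{-n}\int_{\re(s)=\sigma}\Psi^{(U)}(s)\,E_i\!\Big(g,\tfrac s2\Big)_{n}\,R^{s}\,ds,\qquad \Psi^{(U)}=\mathcal{M}\psi^{\pm(U)}.
\]

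Next I would collect the finitely many summands into one meromorphic function $\mathcal E(s):=\sum_{|n|\le V}\widehat{P^{\pm}}_{-n}\,E_i(g,s/2)_n$. By the trivial–residual–spectrum hypothesis together with items~{\bf(P)},~{\bf(1)},~{\bf(1/2)} of Theorem~\ref{thm: factsheet}, $\mathcal E$ is holomorphic on $\{\re(s)\ge1\}$ except for a simple pole at $s=2$ produced by the $n=0$ term. Apply Proposition~\ref{prop: for contour shift} with $a=1$, $b=\sigma$, $E=\mathcal E$, $\Psi(s)=R^s\Psi^{(U)}(s)$ to shift the contour to $\re(s)=1$: (iii) holds by repeated integration by parts as in~\equ{eq: lemma 1.13}; (ii) holds for $|t|\gtrsim V$ by applying~{\bf(G1)} term by term (with the implicit constant now allowed to depend on $\mathcal E$, hence on $V$) and on the remaining bounded $t$-range by continuity; (iv) is trivial on $\re(s)=b$ and on $\re(s)=1$ is exactly the estimate of the next paragraph. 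The residue picked up at $s=2$ has as its only moving parts $\Psi^{(U)}(2)=\tfrac12+O(1/U)$ (as in Step~1 of the proof of Theorem~\ref{thm: more detailed good}) and $\widehat{P^{\pm}}_0=\tfrac{|J|}{2\pi}+O(1/V)$ (Proposition~\ref{prop: Selberg polynomials}(4)), and a residue computation as in that Step~1 identifies it with $c_{Y,S}R^2+O(R^2/U)+O(R^2/V)$, $c_{Y,S}=\tfrac{|J|}{2\pi\,\mathrm{covol}(\Gamma)}$.

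It remains to bound $\int_{\re(s)=1}\Psi^{(U)}(s)\,\mathcal E(s)\,R^s\,ds$ term by term. For $n\neq0$ one has $|\widehat{P^{\pm}}_{-n}|\ll1/|n|$ (Proposition~\ref{prop: Selberg polynomials}(3)) and, by integration by parts, $|\Psi^{(U)}(1+\mathbf i t)|\ll1/|t|$ for $1\le|t|\le U$ and $\ll_k U^{k}/|t|^{k+1}$ for $|t|\ge U$ (any $k$); pairing these with {\bf(G1/2)}, in the shape $\int_1^T|E_i(g,(1+\mathbf i t)/2)_n|^2\,dt\ll(T+|n|)^2$, via Cauchy--Schwarz against a weight $t^{2+\vre}$, shows the $n$-th term is $\ll R\,U^{1/2+\vre}$ --- the crucial point being that the factor $|n|$ coming out of {\bf(G1/2)} is exactly absorbed by the $1/|n|$ of the Selberg coefficient. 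Summing the $2V+1$ terms gives $\int_{\re(s)=1}\Psi^{(U)}(s)\mathcal E(s)R^s\,ds\ll V\,R\,U^{1/2+\vre}$. Altogether, taking $g=e$,
\[
\left|Y\cap R S\right|=c_{Y,S}R^2+O\!\left(\frac{R^2}{U}+\frac{R^2}{V}+V\,R\,U^{1/2+\vre}+\frac{R^2}{U^2}\right),
\]
and balancing the first three error terms forces $U\asymp V\asymp R^{2/5}$, which yields the error $O(R^{8/5+\vre})$; the $\vre$ should then be removable by replacing the Cauchy--Schwarz step with a dyadic decomposition in $t$, as in the remark following Theorem~\ref{thm: more detailed good}.

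The step I expect to be the bottleneck is this last estimate, and it is the reason the exponent is $\tfrac85$ rather than the $\tfrac43$ of the ball case: the Selberg polynomial's Fourier coefficients decay only like $1/|n|$ and it carries $\asymp V$ nonzero modes, so the angular parameter $V$ must be balanced against \emph{both} the residue error $R^2/V$ and the radial parameter $U$ through the term $V R U^{1/2+\vre}$ --- a constraint absent in the $K$-invariant ball problem. The two smaller things to watch are the reduction to $-\mathrm{Id}\in\Gamma$ (so that {\bf(G1/2)} is needed only for even indices, where it is supplied by~\cite{MS}) and the somewhat \emph{ad hoc} verification of hypothesis~(ii) of Proposition~\ref{prop: for contour shift} for the compound function $\mathcal E$ on the bounded range of $t$.
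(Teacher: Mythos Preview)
Your proposal is correct and follows essentially the same route as the paper's proof: radial approximants $\psi^{\pm(U)}$, Selberg polynomials $P^{\pm}_{J,V}$ for the angular part, Proposition~\ref{prop: Lemma 1.6}(3), term-by-term contour shift via Proposition~\ref{prop: for contour shift}, and the same balancing $U\asymp V\asymp R^{2/5}$. The only cosmetic differences are that the paper sets $U=V$ at the outset and runs the dyadic decomposition of Step~2 directly (so no $\vre$ appears), whereas you keep $U,V$ separate and remove the $\vre$ at the end; both lead to the same bound.
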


\begin{proof}
We follow the same steps with the same notations, but now we introduce
an additional approximation parameter $V$, and let $\rho^{\pm 
  (V)}(\theta)$ be approximations of the indicator function $\chi_J$ of $J$,
namely they will satisfy 
\eq{eq: selberg polys requirement}{
\forall s \in \R/2\pi \Z, \ \ \rho^{-(V)}(\theta) \leq \chi_J(\theta)
\leq \rho^{+(V)}(\theta),
}
so that
$re^{\mathbf{i}\theta}  \mapsto \rho^{\pm(V)}(\theta)
\psi^{\pm(U)}\left(r \right)$ are 
approximations of the indicator function of $S$. Then we set  
$$
f^{\pm}_{R, U, V} (re^{\mathbf{i}\theta}) = \rho^{\pm(V)}(\theta) \, \psi^{\pm(U)} \left(
  \frac{r}{ R} \right),
$$
so that 
\eq{eq: above and below 4}{
\Theta_{f^-_{R,U,V}} (g\Gamma) \leq \left|Y \cap RS \right| 
%\leq
%\Theta_{f^+_{R,U}}(g\Gamma) + 
%N \left(g, R \, \frac{\max_\theta \rho(\theta) }{U} \right) 
\leq
\Theta_{f^+_{R,U,V}}(g\Gamma) + O\left(\frac{R^2}{U^2} \right) .
}
As before to lighten notation we omit the superscripts for upper and
lower bounds. Using \equ{eq: twisted eisenstein mellin} we have 
%\eq{eq: expression for Theta}{
$$
\Theta_{f_{R,U, V}}(g\Gamma) = \frac{1}{2\pi \mathbf{i}} \sum_{n\in
  \Z} \hat{\rho}_{-n,V}
\int_{\re(s)=\sigma} \Psi^{(U)}(s) E_i\left( g,
  \frac{s}{2} \right)_{n} R^s \, ds,
$$
%}
where $ \hat{\rho}_{n,V}$ is the $n$th Fourier coefficient of
$\rho^{(V)}$ and $\sigma>2$. 

For each $n$ we perform a contour shift to shift the integral to the line $\{\re(s)=1\},$
justifying it with Proposition \ref{prop: for contour shift}. By our
assumption on $\Gamma$, the only
residue $c_0(U,V)$ that we need to take into account  occurs for $n=0$ at $s
=2$. Setting $c_0 = \frac{|J|}{2\pi \mathrm{covol}(\Gamma)}$, the
residue satisfies
\eq{eq: residue difference 4}{
c_0 - c_0(U,V) = O\left( \frac{1}{U} + \left|\hat{\rho}_{0,V} - \frac{|J|}{2\pi} 
  \right| \right). 
}
Motivated by this, for the functions $\rho^{\pm(V)}$  we use the
polynomials $P^{\pm}_{J,V}$ of Proposition \ref{prop: Selberg
  polynomials} with $U=V$. With this choice \equ{eq: residue
  difference 4} becomes  
$$
c_0 - c_0(U,V) = O\left( \frac{1}{U} \right), 
$$
and we get a bound 
$$
\sum_{n \in \Z}  |n| |\hat{\rho}_{n,V}|  \ll \frac1U + \sum_{0<|n| \leq U}  |n|
\left| \frac1n \right| \ll U.
$$

We now repeat the arguments in Step 2 of the proof of Theorem \ref{thm: more detailed good}  to obtain
%\eq{eq: along critical line}{
$$
\int_{\re(s)=1} \Psi^{(U)}(s)E_i\left( g, \frac{s}{2} \right)_n R^s \, ds
\ll |n| RU^{\frac{1}{2} %+ \vre
}.
$$
%} 
%where the implicit constant depends on $\vre$. 
Note the explicit
dependence on $n$ which arises by using {\bf (G1/2)} in \equ{eq: CS}. 

Collecting estimates we get
%\eq{eq: expression for Theta 4}{
$$
\Theta_{f_{R,U}}(g\Gamma) =
c_0R^2 +
O\left(\frac{R^2}{U} \right) + O\left( R U^{\frac{3}{2} %+\vre 
} \right),
$$
%}
and equating the two error terms and plugging into \equ{eq: above and
  below 4} leads easily to \equ{eq: upper bound we will prove}. 
\end{proof}

\subsection{Counting in well-rounded sets }
In the present section we will prove Theorem \ref{thm:LWR}, using an
argument based on a general lattice point counting result.   
In order to state it, we recall the following:
\combarak{This definition was previously given in \S\ref{subsec: well
    rounded domain counting} but not
  used there at all. I moved it here. }

\begin{definition}(\cite{GN12})
\label{def: well--roundedness}Let $G$ be a connected Lie group with Haar
measure $m_{G}$. Assume $\left\{ G_{t}\right\} \subset G$ is a
family of bounded Borel sets of positive measure such that
$m_{G}\left(G_{t}\right)\ra\infty$ 
as $t\ra\infty$. Let $\cO_{\eta}\subset G$ be the image of a ball
of radius $\eta$ (with respect to the Cartan-Killing norm) in the Lie
algebra under the exponential map. Denote 
\[
G_{t}^{+}\left(\eta\right)=\cO_{\eta}G_{t}\cO_{\eta}=
\bigcup_{u,v\in\cO_{\eta}}u\,G_{t}\,v,\,\,\,\,\,\,   
G_{t}^{-}\left(\eta\right)=\bigcap_{u,v\in\cO_{\eta}}u\,G_{t}\,v.
\]
The family $\left\{ G_{t}\right\} $
is \emph{Lipschitz well-rounded} if there exist positive $c, \eta_{0}, t_{0}$
such that for every $0<\eta\leq\eta_{0}$ and $t\geq t_{0}$,
\[
m_{G}\left(G_{t}^{+}\left(\eta\right)\right)\leq
\left(1+c\eta\right)\,m_{G}\left(G_{t}^{-}\left(\eta\right)\right).  
\]
\end{definition}

\begin{theorem}\label{GN} \cite{GN12}  Let $G$ be any connected almost
  simple non-compact Lie group (e.g. $\SL_2(\R)$), and let $\{G_t\}$  
be a
  Lipschitz well-rounded family of subsets of $G$. % (see Definition \ref{def:
%    well--roundedness}). 
Let $\Gamma$ be a lattice in $G$, and let
  $m_G$ be Haar measure on $G$, normalized so that
  $\mathrm{covol}(\Gamma)=1$. Define 
  the corresponding 
  averaging operators 
$$
\beta_t(f)(x) =\frac{1}{m_G(G_t)} \int_{G_t} f(g^{-1}x) dm_G(g), \ \ f \in
L_0^2(G/\Gamma). 
$$
Suppose the $\beta_t$ satisfy the following
  (operator-norm) bound:  
$$\norm{\beta_t}_{L^2_0(G/\Gamma)}\le Cm_G(G_t)^{-\kappa}\,. $$
Then the lattice point counting problem in $G_t$ has the effective solution 
$$\frac{\left|\Gamma \cap G_t \right| }{m_G(G_t)}=1+O\left(m_G(G_t)^{-\frac{\kappa}{\dim G+1}}\right).$$ 
\end{theorem}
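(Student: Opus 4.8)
The plan is to carry out the thickening argument of Duke--Rudnick--Sarnak and Eskin--McMullen, made effective by the operator-norm hypothesis on $\beta_t$. Throughout, since $G$ is almost simple and non-compact it is unimodular, and $L^2_0(G/\Gamma)$ is exactly the orthocomplement of the constants; I normalize $m_G$ so that $\mathrm{covol}(\Gamma)=1$, hence $m_G(G/\Gamma)=1$. Fix a small symmetric identity neighbourhood $\cO_\eta$ (as in Definition \ref{def: well--roundedness}) and a non-negative function $\phi_\eta$ on $G$ supported in $\cO_\eta$ with $\int_G\phi_\eta\,dm_G=1$ (e.g.\ $m_G(\cO_\eta)^{-1}\chi_{\cO_\eta}$, or a smoothing thereof); for $\eta$ small $\cO_\eta$ injects into $G/\Gamma$, so $\phi_\eta$ descends to a function on $G/\Gamma$ with $\int_{G/\Gamma}\phi_\eta\,dm_G=1$, and $\norm{\phi_\eta}_{L^2(G/\Gamma)}^2=m_G(\cO_\eta)^{-1}\asymp\eta^{-\dim G}$.

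The geometric core is the following sandwich estimate: for any Borel set $B\subset G$ of finite positive measure, with $\beta_B(f)(x)=\frac{1}{m_G(B)}\int_B f(g^{-1}x)\,dm_G(g)$ the attached averaging operator and $B^\pm(\eta)$ the thickened/shrunk sets of Definition \ref{def: well--roundedness},
$$\bigl|\Gamma\cap B^-(\eta)\bigr|\ \le\ m_G(B)\,\inn{\beta_B(\phi_\eta),\phi_\eta}_{L^2(G/\Gamma)}\ \le\ \bigl|\Gamma\cap B^+(\eta)\bigr|.$$
I would prove this by writing $\inn{\beta_B(\phi_\eta),\phi_\eta}=\frac{1}{m_G(B)}\int_B\inn{g\cdot\phi_\eta,\phi_\eta}_{L^2(G/\Gamma)}\,dm_G(g)$, unfolding the inner product over $\Gamma$ to get $\inn{g\cdot\phi_\eta,\phi_\eta}=\sum_{\gamma\in\Gamma\cap\cO_\eta g\cO_\eta}\omega_\eta(g,\gamma)$ where $\omega_\eta(g,\gamma)\ge0$, $\omega_\eta(\cdot,\gamma)$ is supported in $\cO_\eta\gamma\cO_\eta$, and $\int_G\omega_\eta(g,\gamma)\,dm_G(g)=1$ by unimodularity. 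Integrating over $B$ and sorting the $\gamma$ into those with $\cO_\eta\gamma\cO_\eta\subseteq B$ (which contribute exactly $1$ and are precisely the $\gamma\in B^-(\eta)$) and those with $\cO_\eta\gamma\cO_\eta\cap B=\varnothing$ (which contribute $0$, and include all $\gamma\notin B^+(\eta)$) yields the two inequalities.

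Next I would feed in the spectral bound. Writing $\phi_\eta=\mathbf 1+\phi_\eta^0$ with $\phi_\eta^0\in L^2_0(G/\Gamma)$, and using that $\beta_t$ and $\beta_t^*$ fix constants, $\inn{\beta_t(\phi_\eta),\phi_\eta}=1+\inn{\beta_t(\phi_\eta^0),\phi_\eta^0}$, and by Cauchy--Schwarz and hypothesis $|\inn{\beta_t(\phi_\eta^0),\phi_\eta^0}|\le\norm{\beta_t}_{L^2_0(G/\Gamma)}\norm{\phi_\eta^0}_{L^2}^2\ll m_G(G_t)^{-\kappa}\eta^{-\dim G}$. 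Applying the sandwich to $B=G_t$, $B=G_t^+(\eta)$ and $B=G_t^-(\eta)$, using the inclusions $(G_t^-(\eta))^+(\eta)\subseteq G_t\subseteq(G_t^+(\eta))^-(\eta)$, and invoking Lipschitz well-roundedness in the form $m_G(G_t^\pm(\eta))=(1+O(\eta))\,m_G(G_t)$, I would obtain
$$\Bigl|\tfrac{|\Gamma\cap G_t|}{m_G(G_t)}-1\Bigr|\ \ll\ \eta\ +\ m_G(G_t)^{-\kappa}\,\eta^{-\dim G}.$$
Optimizing with $\eta=m_G(G_t)^{-\kappa/(\dim G+1)}$ gives the stated error $O\bigl(m_G(G_t)^{-\kappa/(\dim G+1)}\bigr)$.

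I expect the main obstacle to be the sandwich estimate, specifically the support bookkeeping that makes exactly the sets $B^\pm(\eta)$ of Definition \ref{def: well--roundedness} appear: verifying $\int_G\omega_\eta(\cdot,\gamma)\,dm_G=1$ and identifying the two families of $\gamma$ contributing $1$ and $0$ correctly. Everything downstream (the Cauchy--Schwarz estimate, the well-roundedness input, and the optimization in $\eta$) is routine; the only other small care needed is that $\phi_\eta^0$ genuinely lies in $L^2_0(G/\Gamma)$, which holds because $\int_{G/\Gamma}\phi_\eta\,dm_G=1=m_G(G/\Gamma)$, and that $\beta_t$ preserves $L^2_0$, which holds because it is a Bochner average of the unitary $G$-action, which preserves $L^2_0$.
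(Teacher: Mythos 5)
The paper does not prove this theorem; it is quoted wholesale from Gorodnik--Nevo \cite{GN12} as an input to the proof of Theorem~\ref{thm:LWR}. So there is no in-paper argument to compare against. That said, your reconstruction is a correct and essentially faithful rendering of the argument in \cite{GN12} (which in turn effectivizes the Duke--Rudnick--Sarnak / Eskin--McMullen thickening method), so there is nothing to flag as a different route.

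Two small points worth making explicit. First, in the sandwich step the identities $\int_G \omega_\eta(\cdot,\gamma)\,dm_G=1$ and $\supp\omega_\eta(\cdot,\gamma)\subseteq \cO_\eta\gamma\cO_\eta$ rest on unimodularity of $G$ and symmetry of $\cO_\eta$; with those, the set of $\gamma$ contributing exactly $1$ is $\Gamma\cap B^-(\eta)$ and the set contributing $0$ contains $\Gamma\setminus B^+(\eta)$, exactly matching the two-sided definition of $B^\pm(\eta)$ used in Definition~\ref{def: well--roundedness}. Second, as you observe, to bound $|\Gamma\cap G_t|$ from both sides you apply the sandwich not to $G_t$ itself but to $G_t^{\pm}(\eta)$, via $(G_t^-(\eta))^+(\eta)\subseteq G_t\subseteq (G_t^+(\eta))^-(\eta)$; this means the operator-norm hypothesis is actually invoked for the averaging operators attached to $G_t^\pm(\eta)$, not merely to $G_t$. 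In \cite{GN12} this is harmless because the norm bound is derived from a spectral gap for the $G$-representation on $L^2_0(G/\Gamma)$ and therefore holds for all bounded Borel sets with the same exponent $\kappa$; under the hypothesis as literally quoted in the present paper one should either note this or observe that Lipschitz well-roundedness makes $m_G(G_t^\pm(\eta))\asymp m_G(G_t)$ so the stated bound transfers with a modified constant. With that caveat, your optimization $\eta=m_G(G_t)^{-\kappa/(\dim G+1)}$ balances the geometric error $O(\eta)$ and the spectral error $O(m_G(G_t)^{-\kappa}\eta^{-\dim G})$ and yields the claimed exponent; the cross-term cancellation you invoke is valid because $\beta_B^*=\beta_{B^{-1}}$ also fixes constants, so $\mathbf 1$ remains orthogonal to $\beta_B\phi_\eta^0$.
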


The proof of Theorem \ref{thm:LWR} proceeds by reducing the problem of
counting points in the orbit $\Gamma v$ lying in bounded subsets of the plane, to
counting lattice points in suitable bounded domains in the group $G
=\SL_2(\R)$. The domains constructed in $ \SL_2(\R)$ bijectively  
cover the domains in the plane,
 under the orbit map $g \mapsto gv$, and will depend non-trivially on
 the orbit under consideration, and not just on $\Gamma$.  

In this section we write 
$$
a_t = \left(\begin{matrix} e^{t/2} & 0 \\ 0 & e^{-t/2}\end{matrix} \right).
$$
Let $v\in \R^2\sm \set{0}$ and use polar coordinates in the
plane to write $g=r_{\theta_v}a_{t_v}$ where
$v=g\mathbf{e}_1=r_{\theta_v}a_{t_v} \mathbf{e}_1=e^{t_v/2}r_{\theta_v}\mathbf{e}_1$. The
stability group of $v$ is $N^g=gNg^{-1}$, and for any $t$,
$x$ and $\theta$,
\begin{equation}\label{coordinates}
r_\theta (ga_tg^{-1})(gn_xg^{-1})(v)=r_\theta g a_t \mathbf{e}_1=
%r_\theta ga_sn_xg^{-1}(ge_1)=
%r_\theta (e^{t/2} g\mathbf{e}_1)=
e^{t/2} r_\theta
v=e^{\frac12(t+t_v)}r_{\theta+\theta_v}\mathbf{e}_1\,.
\end{equation} 
 This gives a bijective parameterization of $\R^2 \sm \{0\}$ by $\R
 \times [0,2\pi)$, with each
 pair $(t,\theta) \in \R \times [0, 2\pi)$ determining a unique vector $r_\theta g a_t
 \mathbf{e}_1=e^{t/2}r_{\theta} v$ in $\bR^2\sm\set{0}$. Since
 $\bR^2\sm\set{0}=G/N^g$, we conclude that $G=KA^g N^g$, and
 this decomposition gives unique coordinates to each point in $ G$.
 Note however that this is {\it not} an Iwasawa decomposition, the
 latter being given by $G=K^gA^gN^g$.

%xxxxxxxxxxxxx
 
Let us denote 
$$A_{t_1,t_2}=\set{a_t :  t_1\le t \le t_2}, \ 
N_{x_1,x_2}=\set{n_x :  x_1\le x \le x_2} \ \text{ and } 
K_{\theta_1,\theta_2}=\set{r_\theta :  \theta_1\le \theta \le
  \theta_2}. $$ 
%  In particular, $A_{0,t}=A(T)$ where $T=e^{t/2}$, and $N_{0,r}=N(r)$. 
%Let  $K_\epsilon$ denote the ball of radius $\epsilon$ centered at
%$e\in K$, and similarly we denote $A_\epsilon=A_{-\epsilon,\epsilon}$
%and $N_\epsilon=N_{-\epsilon,\epsilon}$.  
%

Let $D\subset \bR^2$ be a compact set in the Euclidean plane given in
polar coordinates by  
$$D=\set{\rho (\cos\theta, \sin \theta)\,:\, \theta\in I\,\,,\,\,
 0 \le \rho \le \rho(\theta)}$$ 
where $I=[\theta_1,\theta_2]\subset [0,2\pi]$ is an interval of angles contained in
the unit circle, and $\rho(\theta)$ is a positive Lipschitz
continuous function on the interval $I$. %, 
%satisfying $\rho(\theta)\ge b_D > 0$ for $\theta\in
%I$. 
% We note that since our purpose is to count the points in a
%discrete orbit $Y$ which lie in dilates $T\cdot D$, $ T \ge 1$, we can
%and will assume that $\rho_1(\theta)\ge b_Y^\prime > 0$. Here
%$b_Y^\prime $ is such that a disc of this radius centered at the
%origin in $\R^2$ does not contain any points of the discrete orbit, so
%that it does not affect the count. Clearly, for every  star-shaped
%domains appearing in Theorem \ref{thm:LWR}, the unit circle be
%decomposed to finitely many arcs (with disjoint interiors) of the form
%just described, and so it suffices to count in sets satisfying the
%assumptions just stated.  
%Identifying $\SL_2(\bR)/N$ with $\bR^2\sm \set{0}$, where $N$ is the
%stability group of the unit vector $\mathbf{e}_1$ (to fix notation),
The set $D$ can also be written in the form  
$$D=\set{r_\theta a_{t} \mathbf{e}_1\, : \, \theta\in
  I\,\,,\,\, t \le 2 \log \rho(\theta)} \cup \{0\}.$$ 
%where $s(\theta)=2\log \rho(\theta)$. 
Let $b'>0$ be such that $\Gamma v $ contains no points of norm less than
$b'$. For any $T \ge 1$ consider as before the dilated set $T \cdot
D = \{T x : x \in D\}$, and  also the set
%$$D_T 
%=\set{r_\theta a_{2\log T} a_{s(\theta)}  e_1\,;\, \theta\in I\,\,,\,\,2 \log r_1(\theta)\le s(\theta)\le 2 \log r_2(\theta)}$$
$$
D_T=\set{r_\theta a_{t} \mathbf{e}_1\,: \, \theta\in I\,\,,\,\, 2 \log b'
  \leq  t \le 2 \log(T\rho(\theta))}. 
$$
Then $D_T = T \cdot D \sm B(0, b'),$ and hence $|\Gamma v \cap
D_T| = | \Gamma v \cap T\cdot  D|$. 
%The problem of counting $\Gamma$-orbit points in the dilates $D_T$ is equivalent to the lattice point counting problem in the sets in $G$ given by 
%$$\widetilde{D}_t=\set{ r_\theta a_{s(\theta)} n_x\,;\, \theta\in I\,\,,\,\,  t+\log r_1(\theta)\le s \le t+\log r_2(\theta)\,\,,\,\, 0\le x \le 1}
%$$
%where $t=\log T$. 

We  will now define bounded domains $\widetilde{D}_T\subset G$ which
bijectively cover 
$D_T$. %, under the map $h\mapsto h\mathbf{e}_1$. 
%These domains will
%constitute a Lipschitz well-rounded family (as we shall see), and
%counting the points of a suitable lattice  in 
%them will be equivalent to counting the lattice orbit points  $\Gamma 
%\cdot v $ in $D_T$.  
%
%
%
Fix a positive number $x_0=x_0(g)$ so that the set $N^g(x_0)=\set{gn_xg^{-1} : 0\le x
   < x_0}$ is a fundamental domain for the subgroup $\Gamma\cap
N^g\cong \bZ$ in the group $N^g\cong \bR$.  
For each $\theta\in I$ define 
 $$J(T,\theta)=\set{t\in \R\,: \, 2\log
   \frac{b'}{\norm{v}}\le t \le 2\log
   \frac{T\rho(\theta)}{\norm{v}}}=[t_1,t_2(T,\theta)]$$ 
and with respect to the decomposition $G=KA^gN^g$, define  
$$\widetilde{D}_{T}=\set{r_{\theta-\theta_v}ga_tg^{-1}\,: \, \theta\in
  I, t\in J(T,\theta) }\cdot  N^g(x_0)\,. 
$$

Then, as the reader may verify using \eqref{coordinates}, the orbit
map $G \to \R^2, g \mapsto g 
v $, restricted to $\Gamma \cap \widetilde{D}_T$, is a bijection with
its image 
$\Gamma v \cap D_T$, and as a consequence we obtain:
%The count of lattice orbit points $\Gamma v$ in $D_{T}\subset \bR^2$ can be reduced to the count of lattice points in the domains $\widetilde{D}_{T}\subset G$. 
\begin{lemma}\label{lift} 
$| \Gamma v\cap T \cdot D|=|\Gamma \cap \widetilde{D}_{T}|$. 
\end{lemma}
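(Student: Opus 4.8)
The plan is to deduce the identity by exhibiting an explicit bijection through the orbit map. Let $\pi\colon G\to\R^2$, $\pi(h)=hv$, with fibre over $v$ the stabilizer $N^g=gNg^{-1}$. First I would note that, since $\Gamma v$ contains no vector of norm less than $b'$ and $D_T=T\cdot D\setminus B(0,b')$, one already has $|\Gamma v\cap T\cdot D|=|\Gamma v\cap D_T|$; so it suffices to show that $\pi$ restricts to a bijection $\Gamma\cap\widetilde D_T\to\Gamma v\cap D_T$.

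The first step is to identify the image of $\widetilde D_T$ under $\pi$. A typical element of $\widetilde D_T$ has the form $h=r_{\theta-\theta_v}\,(ga_tg^{-1})\,(gn_xg^{-1})$ with $\theta\in I$, $t\in J(T,\theta)$ and $0\le x<x_0$. Since $N^g$ fixes $v=g\mathbf e_1$, the factor $gn_xg^{-1}$ contributes nothing, and \eqref{coordinates} together with $\|v\|=e^{t_v/2}$ gives $\pi(h)=e^{t/2}\|v\|\,(\cos\theta,\sin\theta)$. Comparing with $D_T=\{r_\theta a_{t'}\mathbf e_1:\theta\in I,\ 2\log b'\le t'\le 2\log(T\rho(\theta))\}$ and substituting $t'=t+t_v$, the constraint $t\in J(T,\theta)=[\,2\log(b'/\|v\|),\,2\log(T\rho(\theta)/\|v\|)\,]$ is seen to be exactly the condition $\pi(h)\in D_T$. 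Hence $\pi(\widetilde D_T)=D_T$; in particular $\pi$ maps $\Gamma\cap\widetilde D_T$ into $\Gamma v\cap D_T$.

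Next I would check bijectivity on the $\Gamma$-points. For injectivity, if $h_1,h_2\in\Gamma\cap\widetilde D_T$ with $\pi(h_1)=\pi(h_2)$, the formula above forces $\theta_1=\theta_2$ and $t_1=t_2$, so $h_2^{-1}h_1=gn_{x_1-x_2}g^{-1}\in\Gamma\cap N^g$ with $|x_1-x_2|<x_0$; since $N^g(x_0)$ is a fundamental domain for $\Gamma\cap N^g$ in $N^g$, this element is trivial, i.e.\ $h_1=h_2$. For surjectivity, take $w\in\Gamma v\cap D_T$, write $w=\gamma v$ with $\gamma\in\Gamma$, and use the previous step to pick $h_0=r_{\theta-\theta_v}(ga_tg^{-1})\in\widetilde D_T$ with $\pi(h_0)=w$; then $h_0^{-1}\gamma$ fixes $v$, say $h_0^{-1}\gamma=gn_yg^{-1}$, and choosing $x'\in[0,x_0)$ with $gn_{y-x'}g^{-1}\in\Gamma\cap N^g$ (possible because $N^g(x_0)$ is a fundamental domain), the element $h=h_0\,(gn_{x'}g^{-1})$ lies in $\widetilde D_T$, equals $\gamma\,(gn_{y-x'}g^{-1})^{-1}\in\Gamma$, and satisfies $\pi(h)=\pi(h_0)=w$. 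Counting both sides of the resulting bijection gives $|\Gamma\cap\widetilde D_T|=|\Gamma v\cap D_T|=|\Gamma v\cap T\cdot D|$.

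The computations are all routine, and I expect no serious obstacle. The only point requiring care is the first step: verifying that $J(T,\theta)$ has been set up so that $\pi(\widetilde D_T)$ equals $D_T$ exactly, rather than merely up to boundary pieces, while keeping in mind that the $N^g$-coordinate $x$ is invisible to $\pi$. It is precisely this invisibility, combined with the choice of $x_0$ as a fundamental-domain width, that makes $\pi$ simultaneously injective and surjective on the $\Gamma$-points, so the bookkeeping around the $KA^gN^g$ decomposition is where I would be most careful.
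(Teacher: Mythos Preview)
Your argument is correct and is precisely the verification the paper leaves to the reader: the text preceding the lemma states that the orbit map $g\mapsto gv$, restricted to $\Gamma\cap\widetilde D_T$, is a bijection onto $\Gamma v\cap D_T$, and your three steps (computing $\pi(\widetilde D_T)=D_T$, then injectivity via the fundamental-domain width $x_0$, then surjectivity by reducing the $N^g$-coordinate modulo $\Gamma\cap N^g$) carry this out exactly as intended. The paper's own (suppressed) proof argues the same bijection in slightly different language, showing directly that each $\gamma\in\Gamma$ with $\gamma v\in D_T$ has a unique $\Gamma\cap N^g$-translate lying in $\widetilde D_T$.
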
 

\ignore{
\begin{proof} 
We will show that for $\gamma\in \Gamma$, the conditions $\gamma v\in
D_T$ and  $\gamma\in  \widetilde{D}_{T}$ are equivalent. 

First let $\gamma\in  \Gamma \cap  \widetilde{D}_{T}$, so that
$\gamma=r_{\theta-\theta_v}ga_sg^{-1}n_x^g$ (with $\theta \in I$,
$s\in J(T,\theta)$, and $x\in [0,x_0)$), 
where this representation is unique since the decomposition
$G=KA^gN^g$ gives unique coordinates for each element of $G$. It
follows from the fact that $g=r_{\theta_v}a_{t_v}$ and $e^{\frac12
  t_v}=\norm{v}$ % (\ref{coordinates})  
that $\gamma v \in D_T$. Conversely, let $\gamma v =r_\theta a_{s}
\mathbf{e}_1\in 
D_{T}$, with $\theta\in I$ and $2\log (\rho_1(\theta)) \le s \le 2
\log(T\rho(\theta))$.  Then $r_{\theta-\theta_v}ga_{s-2\log
  \norm{v}}g^{-1}v=r_\theta a_{s} \mathbf{e}_1=\gamma v$. Therefore
$\gamma$ is congruent to an element of the form 
$r_{\theta-\theta_v}ga_{s^\prime}g^{-1}$ modulo $N^g$, with $\theta\in
I$ and $s^\prime =s-2\log \norm{v}\in J(T,\theta)$, since $N^g$ is the  
stability group of $v$.  It follows that
$\gamma=r_{\theta-\theta_v}ga_sg^{-1}n_x^g$ for some $x\in \R$. But
$\gamma$ is congruent to a unique such element whose $N^g$ component
is in the fundamental domain of $\Gamma\cap N^g$ in $N^g$. It follows
that the two finite sets in question are in one-to-one correspondence.  
\end{proof}

}
%To derive an effective solution of the lattice point counting problem in the family $\widetilde{B}_{T,I}$
To complete the proof of Theorem \ref{thm:LWR}, it remains to prove
that the family $\{\widetilde{D}_T\}$ is Lipschitz well-rounded. It will be convenient to
use the following two facts.  

\combarak{There was a fact here which was stated, about stability of
  the sets $\cO_\eta$ under conjugations. I believe this is not used
  anywhere so I commented it out. Please check. }
%First, since $\cO_\eta$ denotes the exponential of a Cartan-Killing
%norm ball of radius $\eta$ centered at $0$ in the Lie algebra of
%$G$, clearly for any $g$ in $G$ there exists $\eta^\prime_1=\eta^\prime_1(g) > 0$
%such that for all $0< \eta < \eta^\prime_1$  
%$$ \cO_{\eta/\norm{\text{Ad}(g)}}\subset     g\cO_\eta g^{-1}
%\subset  \cO_{\norm{\text{Ad}(g)}\eta}\,.$$ 
%\combarak{I believe that here $\|\Ad(g)\|$ needs to be replaced with
%  $\max (\|\Ad(g)\|, \|\Ad(g)^{-1}\|)$, or with $c \|\Ad(g)\|$ for $c$
%  uniform in compact subsets of $g$. }
%
%Furthermore $\eta_1^\prime(g)$ has a uniform positive lower bound $\eta^\prime_1(Q)$
%for $g$ varying in a compact set $Q\subset G$.  
%
%\end{lemma} 
%Secondly we have the following result:  
 
We will need the following result: 
 \begin{lemma}\label{well-rounded}(see \cite{GN12})
 If $\{G_t\}$ is a Lipschitz well-rounded family of subsets of $G$, then for each
 $g,h\in G$, so are the families  
$\{gG_t\}$, $\{G_t g\}$ and $\{gG_t h\}$. Furthermore the
corresponding constants $c, t_0, \eta_0$ are bounded above and away
from zero, as $g$ varies
on a compact set $Q$ in $G$.
\end{lemma}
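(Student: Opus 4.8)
The plan is to reduce everything to how the thickening sets $\cO_\eta$ behave under conjugation, since left and right translation interact with the quantities $G_t^{\pm}(\eta)$ only up to moving an $\cO_\eta$ factor past the translating element. Recall that $\cO_\eta=\exp(B_\eta)$, where $B_\eta\subset\gog=\Lie(G)$ is the ball of radius $\eta$ in the Cartan--Killing norm, and that $h\exp(X)h^{-1}=\exp(\Ad(h)X)$, so that $g^{-1}\cO_\eta g=\exp\!\big(\Ad(g^{-1})B_\eta\big)$ and $g\cO_\eta g^{-1}=\exp\!\big(\Ad(g)B_\eta\big)$. Setting $L=L(g)\df\max\big(\norm{\Ad(g)},\norm{\Ad(g^{-1})}\big)\ge 1$ (operator norm for the Cartan--Killing norm), the elementary inclusions $B_{\eta/L}\subseteq\Ad(g^{\pm1})B_\eta\subseteq B_{L\eta}$ give
$$
\cO_{\eta/L}\;\subseteq\;g^{-1}\cO_\eta g\;\subseteq\;\cO_{L\eta}
\qquad\text{and}\qquad
\cO_{\eta/L}\;\subseteq\;g\cO_\eta g^{-1}\;\subseteq\;\cO_{L\eta}.
$$
Also, translates of a bounded Borel set of positive measure are again of this type, and $m_G(gG_tg')=m_G(G_t)\to\infty$, so the translated families are again of the kind to which the definition of Lipschitz well-roundedness applies.

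First I would handle the left-translated family $\{gG_t\}$. Writing $\cO_\eta g=g\,(g^{-1}\cO_\eta g)$ and using $L\ge1$,
$$
(gG_t)^+(\eta)=\cO_\eta\,g\,G_t\,\cO_\eta=g\,(g^{-1}\cO_\eta g)\,G_t\,\cO_\eta\;\subseteq\;g\,\cO_{L\eta}\,G_t\,\cO_{L\eta}=g\,G_t^+(L\eta),
$$
while, since enlarging the index set of an intersection can only shrink it,
$$
(gG_t)^-(\eta)=\bigcap_{u,v\in\cO_\eta}u\,g\,G_t\,v=g\bigcap_{u,v\in\cO_\eta}(g^{-1}ug)\,G_t\,v\;\supseteq\;g\bigcap_{w,v\in\cO_{L\eta}}w\,G_t\,v=g\,G_t^-(L\eta),
$$
using $\{g^{-1}ug:u\in\cO_\eta\}\subseteq\cO_{L\eta}$ and $\cO_\eta\subseteq\cO_{L\eta}$. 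By left-invariance of $m_G$ and the well-roundedness hypothesis for $\{G_t\}$ applied with parameter $L\eta$, for $0<\eta\le\eta_0/L$ and $t\ge t_0$,
$$
m_G\big((gG_t)^+(\eta)\big)\le m_G\big(G_t^+(L\eta)\big)\le(1+cL\eta)\,m_G\big(G_t^-(L\eta)\big)\le(1+cL\eta)\,m_G\big((gG_t)^-(\eta)\big),
$$
so $\{gG_t\}$ is Lipschitz well-rounded with constants $(cL,\ \eta_0/L,\ t_0)$. The right-translated family $\{G_tg\}$ is entirely symmetric: write $g\cO_\eta=(g\cO_\eta g^{-1})g$, use the second chain of inclusions, and invoke unimodularity of $G$ (which is almost simple) in place of left-invariance; one again obtains constants $(cL,\ \eta_0/L,\ t_0)$. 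Finally $\{gG_th\}$ follows by composing the two reductions, or directly with $L$ replaced by $\max(L(g),L(h))$.

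For the uniformity over a compact set $Q\subset G$: the map $g\mapsto\Ad(g)$ is continuous, so $L_Q\df\sup_{g\in Q}\max\big(\norm{\Ad(g)},\norm{\Ad(g^{-1})}\big)$ is finite and $\ge 1$; the triple $(cL_Q,\ \eta_0/L_Q,\ t_0)$ then works simultaneously for all $g\in Q$ and all three translated families, and, more precisely, the constants produced by the argument lie in $[c,\,cL_Q]\times[\eta_0/L_Q,\,\eta_0]\times\{t_0\}$, which is bounded above and bounded away from zero. I do not expect a genuine obstacle here; the only points requiring care are the direction of the inclusions for $G_t^-(\eta)$ (a larger thickening of the index set yields a \emph{smaller} intersection, hence ``$\supseteq$'') and shrinking the admissible $\eta$-range to $\eta\le\eta_0/L$ so that $L\eta$ stays where the hypothesis on $\{G_t\}$ can be invoked.
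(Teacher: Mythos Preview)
Your argument is correct. The paper does not actually supply a proof of this lemma; it simply records it with the attribution ``(see \cite{GN12})'' and uses it as a black box in the proof of Theorem~\ref{thm:LWR}. So there is no in-paper proof to compare against.

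That said, your proof is exactly the standard one and is what one would find upon unwinding the reference: the key observation is the conjugation behavior $g^{-1}\cO_\eta g=\exp(\Ad(g^{-1})B_\eta)$, which yields the sandwich $\cO_{\eta/L}\subseteq g^{\pm1}\cO_\eta g^{\mp1}\subseteq\cO_{L\eta}$ with $L=\max(\|\Ad(g)\|,\|\Ad(g^{-1})\|)$. Your handling of the two directions for $G_t^{\pm}$ is correct (in particular the point you flag, that enlarging the index set in the intersection defining $G_t^-$ shrinks the set, giving the $\supseteq$ needed), and the reduction of the admissible range to $\eta\le\eta_0/L$ is the right move. Unimodularity is indeed available since $G$ is (almost) simple. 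The compactness argument for uniform constants via continuity of $g\mapsto\Ad(g)$ is also fine.
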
 

It therefore suffices to prove that the sets 
$$ \widetilde{D}_{T}g=\set{r_{\theta-\theta_v}ga_t n_x\,:\, \theta\in
  I, t \in J(T,\theta) , 0\le x <x_0}$$ 
are Lipschitz well-rounded. Recalling that $g=r_{\theta_v}a_{t_v}$ and
setting $T = e^{\tau}$, we have 
$$\widetilde{D}_T g = \cC_\tau=\set{r_{\theta}a_{t+t_v} n_x\,:\,
  \theta\in I, t\in J(e^\tau,\theta) , 0\le x < x_0}.$$ 

The Iwasawa coordinates $K\times A \times N\to G$ given by
$(k,a,n)\mapsto kan$  satisfy the following Lipschitz property,
established in   
\cite[Prop. 4.4]{HoreshNevo}. For every fixed $S_0\in \R$,
there exist $C_1=C_1(S_0)> 0$ and $\eta_1=\eta_1(S_0) > 0$, such
that for all $\theta$, all $x$ with $0\le x\le x_0$, and all
$t \ge S_0$, $0 < \eta < \eta_1$:  
\begin{equation}\label{perturb}
\cO_{\eta}r_\theta a_t n_x \cO_{\eta}\subset
K_{\theta-C_1\eta, \theta+C_1\eta}
A_{t-C_1\eta,t+C_1\eta}N_{x-C_1\eta,x+C_1\eta}\,. 
\end{equation}
%there exists a positive constants $\zeta_1$, $\zeta_2$ such that 
%\begin{equation}\label{local Iwasawa} K_{\zeta_2 \eta} A_{\zeta_2
%\eta} N_{\zeta_2 \eta} \subset \cO_\eta \subset K_{\zeta_1 \eta}
%A_{\zeta_1\eta}N_{\zeta_1 \eta} \end{equation} 
%for all $0 < \eta < \eta_1$, where $\eta_1$ depends on $G$ only. 

Let  $S_0=2\log b'$, let $C_1=C_1(S_0)$, and let $C=C_1L$, where $L$ is Lipschitz
constant of the function $\rho$. Finally for  
$$\eta< \min\left \{ \frac{\eta_1}{4C},
  \frac{\theta_2-\theta_1}{4C}, \frac{x_0}{4C}\right\},$$ 
let 
$$
W^-(\tau,\eta) = \left\{k a n: k \in K_{\theta_1+ C\eta, \theta_2- C\eta}, \
a \in A_{t_1+ C\eta,t_2(e^\tau,\theta) - C\eta}, \ n \in N_{C\eta, x_0-C\eta} 
\right\}.$$
Applying (\ref{perturb}) to $g\in W^-(\tau,\eta)$ it follows readily that 
$W^-(\tau,\eta)\subset C_\tau^-(\eta). $
%= \bigcap_{u,v\in \cO_\eta}u\cC_\tau v\,.$$ 
A straightforward verification, using the explicit form of Haar
measure in Iwasawa coordinates and the fact that $\rho$ is 
Lipschitz, shows that $m_G(W^-(\tau,\eta) ) \ge  (1-
c_1\eta) \cdot m_G(\cC_\tau)$, for a suitable $c_1 > 0$. In the other
direction, note that $C_t^+(\eta)=\cO_{\eta} \cC_t
\cO_{\eta}$ is contained in 
$$
W^+(\tau,\eta)=\left \{ k a n : k \in K_{\theta_1-C\eta,
    \theta_2+C\eta},\ a \in A_{t_1- C\eta,t_2(e^\tau,\theta)
    +C\eta} ,\ n \in N_{-C\eta , x_0+C\eta} \right \},$$ 
where for $\theta\in [\theta_1-C\eta, \theta_1] \cup
[\theta_2,\theta_2+C\eta]$ we define
$t_2(e^\tau,\theta)=\max (t_2(e^t,\theta_1), t_2(e^t,\theta_2))$.  
Again a similar direct verification shows that $m_G(W^+(\tau,\eta))\le
(1+c_2\eta) \cdot m_G(\cC_\tau)$. The Lipschitz well-roundedness of the
family $\cC_\tau$ follows, and this completes the proof of Theorem~\ref{thm:LWR}.   
\qed
%
%\begin{proposition} \label{stability}
%For every $g\in G$ written in Iwasawa coordinates as $g=r_\theta a_s n_x$,  
%$$K(I_\epsilon^-)A_{s+C\epsilon,s-C\epsilon}N_{x+C\epsilon,x-C\epsilon}\subset \cO_{\epsilon} \cC_t \cO_{\epsilon}\subset K(I^+_\epsilon)A_{s-C\epsilon,s+C\epsilon}N_{x-C\epsilon,x+C\epsilon}$$
% and  the positive finite constants $C$ and $\epsilon_1$ are uniform and independent of $G$. 
%\end{proposition} 
%
%
%
%\noindent{\it Proof of Proposition \ref{stability} }.
%To conclude the proof that the family $\cC_t$ is Lipschitz well-rounded, we should show that the sets 
%$\cC_{t}^{+}\left(\epsilon\right):=\cO_{\epsilon}\cC_{t}\cO_{\epsilon}=\bigcup_{u,v\in\cO_{\epsilon}}u\,\cC_{t}\,v,
%$ and $
%\cC_{t}^{-}\left(\epsilon\right):=\bigcap_{u,v\in\cO_{\epsilon}}u\,\cC_{t}\,v$ 
%satisfy 
%\[
%m_{G}\left(\cC_{t}^{+}\left(\epsilon\right)\right)\leq\left(1+c\epsilon\right)\,m_{G}\left(\cC_{t}^{-}\left(\epsilon\right)\right),
%\]
%where $c>0$ is a constant that does not depend on $\epsilon$ or $t$.
%%
\begin{remark}
\begin{enumerate}
%
%\item Note that the proof applies also the dilates of the intersection of the convex set with a fixed sector. They also apply to shells,  namely to the dilates $R\cdot (C_1\setminus C_2)$, where $C_1\subset C_2$ are two piecewise Lipschitz convex sets (with $0\in \text{Int}(C_1)$), whose boundaries are a positive distance apart. 

\item Let us note that an error estimate established for the count in
  the dilates $R\cdot S$ of any  given figure (in the plane, say),
  immediately implies an error estimate for the count in {\em shells} of
  shrinking width, namely with the sets $R\cdot S\sm
  (R-R^{-\alpha})\cdot S$ for a suitable range of positive parameters
  $\alpha$. Similarly it is also possible to intersect shells with sectors
  of shrinking angle, namely with sets $\set{r(\cos
    \theta,\sin\theta) \,:\, \theta\in [\theta_0-R^{-\beta},
    \theta_0], r\in \R_+}$, for a suitable range of positive
  parameters $\beta$, and obtain an effective estimate. This follows
  from the fact that Theorem~\ref{GN} allows counting in a variable
  family of domains, provided that their Lipschitz well-roundedness
  parameters are controlled.  

\item A straightforward modification of the proof of
  Theorem~\ref{thm:LWR} applies to counting in discrete orbits of
  non-uniform lattices in $\SL_2(\C)$ acting linearly on $\C^2$, as
  well as discrete orbits of
  non-uniform lattices in $\SO^0(n,1)$ acting linearly on
  $\R^{n+1}$. This is based on Theorem~\ref{GN} and the Lipchitz
  property of the Iwasawa decomposition, established in
  \cite{HoreshNevo} for any non-exceptional group of real rank one.  
\end{enumerate}
\end{remark}

\begin{appendix}
\section{The growth estimate {\bf (G1)} for general $n$, and the
  function $\omega(t)$}
An important input to our argument is the estimate {\bf (G1)} which is
used to bound the average growth of the Eisenstein series along vertical lines
$\re(s) = \sigma$, for $\sigma \in [1/2,1]$. This is a crucial input
to our method, see condition (ii) of
Proposition \ref{prop: for contour shift}. This growth estimate was
proved by Selberg for $n=0$ but as far as we are aware, does not
appear in the literature for the twisted Eisenstein series for general
$n$. In this appendix we close this gap in the literature, and also
provide estimates of the dependence of the implicit constant on
$n$. Many of our arguments are based on ideas in \cite{Cohen Sarnak, Sarnak
  horocycles}. 

We first
introduce standard notation. 
Let
$\Phi(s) = \left(\varphi_{ij}(s) \right)_{ij}$ be the constant term
matrix as in \equ{eq: constant term}, let $(s_\ell)$ denote all the
poles of the functions $\varphi_{ij}$ in the interval $[1/2,1]$, 
%let $\EE(z,s) = (E_1(z,s), 
%\ldots, E_k(z,s))$, and let $\mathbf{\Gamma}$ denote the classical
%$\Gamma$-function. For $n \in \N$ define
%$$
%\EE_{2n}(z,s) = (E_1(z,s)_{2n}, \ldots, E_k(z,s)_{2n}),
%$$
%$$
%\Phi_{2n}(s) = (-1)^n \,
%\frac{\mathbf{\Gamma}(s)^2}{\mathbf{\Gamma}(s-n)\mathbf{\Gamma}(s+n)} \Phi(s), \
%\ 
%$$
let $q = q_\Gamma>0$ be a real number specified by Selberg (see
\cite[p. 655]{Selberg gottingen}) and set  
\eq{eq: def omega}{
\Psi_0(s) = \det \Phi(s),
\ \ \Psi^*(s) = q^{2s-1} \prod_\ell \frac{s-s_\ell}{s-1+s_\ell} \Psi_0(s), \ \ 
\omega(t) = 1-\frac{\Psi^{*'}}{\Psi^*} \left(\frac12 + \mathbf{i} t \right). 
}
The function $\omega: \R \to \R$ thus defined is the function
appearing in Theorem~\ref{thm: factsheet}. It satisfies $\omega(t) >1$
for all $t$ (see \cite[p. 656]{Selberg gottingen}).
%, and we will prove
%a stronger lower bound in Proposition \ref{prop: new bound} below. 

\begin{theorem}\name{prop: in appendix}
For any non-uniform lattice $\Gamma$ in $G$,
the twisted Eisenstein series $E_i( \cdot)_n$ corresponding to the $i$-th cusp as
in \equ{eq: twisted eisenstein 2} satisfies 
%If $\vre>0, \, \re(s) \geq 1/2, \, n \in \Z$ and $|t| \geq \vre>0$ 
%then $E_{i}(z,s)_{2n} \ll
%\sqrt{\omega(t)}\, e^{3|t|}$, where  $t = \im(s)$ 
%and implicit constants depend on $z, \, \vre, \, n$ and $\Gamma$.
\eq{eq: want appendix}{
	\re{s}\geq\frac12, |t| \geq |n|+1 \ \ \implies \ \ 
		E_{i}(z,s)_{2n} \ll |t| \, %\left(
               %   n^a\log{(|t|^2+n^2})+
\sqrt{\omega(t)} \ \ \ % \right) \ \
 (\text{where } t = \im(s)),
}
where implicit constants depend on $\Gamma$ and $z$ (but not on $n$). 
\end{theorem}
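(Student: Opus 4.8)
The plan is to follow the classical strategy of Selberg for $n = 0$, adapted to the twisted setting, and to track the $n$-dependence carefully at each step. First I would reduce the estimate to the critical line $\re(s) = 1/2$: by the maximum principle applied to a suitable half-strip $\{1/2 \le \re(s) \le 1 + \zeta, \ \im(s) \ge |n| + 1\}$, together with the elementary bound on $\{\re(s) = 1 + \zeta\}$ coming from absolute convergence (item {\bf (AC)}) and the Phragmén--Lindelöf principle, it suffices to prove \equ{eq: want appendix} for $s = \tfrac12 + \mathbf{i}t$ with $|t| \ge |n| + 1$. (One has to be a little careful with the poles in $(1/2,1]$, but these are finite in number and away from the region $|t| \ge |n| + 1$ once $|n|$ is large; for small $|n|$ the claim for $\re(s) > 1/2$ away from the critical line is handled directly.)

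On the critical line I would use the Maass--Selberg relation, which controls the $L^2$-norm over a truncated fundamental domain of the truncated Eisenstein series $\Lambda^Y E_i(z,s)_{2n}$ by an expression involving $Y$, $\log Y$, and the logarithmic derivative of the scattering matrix $\Phi$; it is exactly from this relation that the function $\omega(t) = 1 - \tfrac{\Psi^{*\prime}}{\Psi^*}(\tfrac12 + \mathbf{i}t)$ of \equ{eq: def omega} arises. This gives an $L^2$-bound of the shape $\|\Lambda^Y E_i(\cdot, \tfrac12 + \mathbf{i}t)_{2n}\|^2 \ll \omega(t) + (\text{terms in }Y)$ on the truncated domain, uniformly in $n$. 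To pass from this $L^2$ control to a pointwise bound I would invoke a local elliptic/Sobolev estimate for the relevant eigenfunction equation (the Casimir equation, whose eigenvalue is $\tfrac14 + t^2$ together with the weight-$2n$ contribution): a bootstrap using the eigenvalue equation converts an $L^2$-bound on a slightly larger ball into a sup-norm bound on a smaller ball, at the cost of a factor that is polynomial in the eigenvalue parameter, hence polynomial in $|t|$ and $|n|$; the hypothesis $|t| \ge |n| + 1$ is what lets us absorb the $|n|$-dependence into the $|t|$-dependence and land on the clean bound $|t|\sqrt{\omega(t)}$. The contribution of the constant-term part that was subtracted off in the truncation is estimated separately using the explicit form of the Fourier expansion and the known polynomial growth of $\varphi_{ij}$.

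The main obstacle, and the place where real work is needed beyond citing Selberg, is making the $n$-dependence in the two analytic inputs explicit and uniform: (i) in the Maass--Selberg relation, the auxiliary parameter $Y$ (the truncation height) must be chosen as a function of both $t$ and $n$, and one must check that the "error" terms it introduces do not reintroduce an uncontrolled power of $|n|$; and (ii) in the elliptic-regularity step, one must verify that the implied constant depends on $z$ (through the injectivity radius and the shape of the chosen ball) and on $\Gamma$, but not on $n$ — this is where following \cite{Cohen Sarnak, Sarnak horocycles} is essential, since those references carry out precisely this kind of uniform local analysis for horocycle-type integrals and weighted eigenfunctions. Once these two uniformities are in hand, assembling them with the reduction to the critical line gives \equ{eq: want appendix}, and the positivity and evenness of $\omega$ (recorded after \equ{eq: def omega}) guarantee that $\sqrt{\omega(t)}$ makes sense and the statement is as claimed.
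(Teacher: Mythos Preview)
Your overall architecture---Maass--Selberg relations for an $L^2$ bound on the truncated Eisenstein series, then a local analysis to upgrade to a pointwise bound---matches the paper. Two points, however, diverge in ways that matter.

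First, the Phragm\'en--Lindel\"of reduction to the critical line is both unnecessary and problematic. The function $\omega(t)$ is defined only in terms of $t = \im(s)$ via \equ{eq: def omega} and is not known a priori to be polynomially bounded (only $\int_{-T}^T \omega(t)\,dt \ll T^2$ is asserted in {\bf ($\omega$)}). Phragm\'en--Lindel\"of requires a holomorphic majorant or at least polynomial control on the boundary; it is not clear how to interpolate the bound $|t|\sqrt{\omega(t)}$ on $\re(s)=\tfrac12$ against $O(1)$ on $\re(s)=1+\zeta$ and recover $|t|\sqrt{\omega(t)}$ in the interior. The paper avoids this entirely: it proves the $L^2$ bound $\|E_j^Y(\cdot, s)_{2n}\|_2^2 \ll \omega(t)$ \emph{directly for all} $\sigma \in [\tfrac12,\tfrac32]$, using the Maass--Selberg formula for $\sigma>\tfrac12$ and checking that the potentially singular term $\tfrac{1}{2\vre}\big(k Y^{2\vre}-\|\Phi_{2n}(s)\|^2 Y^{-2\vre}\big)$ (with $\vre=\sigma-\tfrac12$) stays $\ll\omega(t)$ uniformly as $\vre\to 0$.

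Second, you flag $n$-uniformity as the crux but do not name the mechanism. The paper's key computation is that the twisted scattering matrix factors as $\Phi_{2n}(s)=(-1)^n B_n(s)\Phi(s)$ with $B_n(s)=\prod_{k=1}^{|n|}\frac{s-k}{s+k-1}$, and an elementary estimate gives $|B_n(s)|\le 1$ and, crucially, $1-|B_n(s)|^2\ll\vre$ whenever $|t|\ge|n|$. This is exactly what makes the Maass--Selberg term above uniformly bounded in $n$, and is where the hypothesis $|t|\ge|n|+1$ enters. For the pointwise step the paper does not invoke abstract elliptic regularity but uses a concrete weight-$2n$ point-pair invariant $k_\delta$ with $\delta\asymp|t|^{-2}$: the operator $L_{k_\delta}$ has eigenvalue $\gg|t|^{-2}$ on $E_i(\cdot,s)_{2n}$ uniformly in $n$ (this is taken from \cite{MS}), and Cauchy--Schwarz against $\|K_\delta(z_0,\cdot)\|_2\ll\sqrt{\delta}$ yields the final $|t|\sqrt{\omega(t)}$.
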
 

\begin{proof}
We will divide the proof into a series of steps. Throughout the
proof we will write 
$$
\sigma = \re(s), \, t = \im(s), \ \ \text{ and } \ 
\vre = \sigma - \frac{1}{2},
$$ 
and will assume $\sigma \in \left[ \frac12, \frac32 \right],$ which
entails no loss of generality as $E_i(z, \cdot)_{2n}$ is uniformly bounded
on $\left\{s: \re(s) \geq \frac32 \right\}$.  Implicit constants in
the $\ll$ and $O$ notation depend on $\Gamma$ and $z$ (and not on $n$
or $\sigma$). Since some of our arguments will depend on the
dependence of the Eisenstein series on the variable $z$, from now on
we will write $z_0$ instead of $z$ and consider it as a fixed element
of $\Hyp.$ 

\medskip

{\em Step 1. Bounding $\Phi$ and the $\mathbf{\Gamma}$-factor.}
% \label{lem:gamma_function} 
Let $\Phi(s) = (\varphi_{ij}(s))_{i,j}$ be the constant term matrix as
in \equ{eq: constant term}. By \cite[p. 655]{Selberg gottingen}, $\Phi$
is uniformly bounded as long as  $\frac{1}{2} \leq \sigma \leq
\frac{3}{2}$ and $|t| \geq 1$. Following \cite[Chapter 6]{Kubota},
define
$$
\Phi_{2n}(s) = (\varphi_{ij}(s)_{2n})_{i,j}  \ \ \text{ by } 	\ \
\Phi_{2n}(s)=(-1)^n \, B_N(s)\Phi(s), 
$$
where 
	$$
		B_n(s)=\frac{\mathbf{\Gamma}(s)^2}{\mathbf{\Gamma}(s-n) \mathbf{\Gamma}(s+n)}.    
	$$
		Then 
\eq{eq: step 1.1}{|t |\geq 1, \ \sigma \leq 1
 \ \implies 
		|B_n(s)|\leq 1,
}
with equality for $\sigma = \frac12$, and 
%\eq{eq: step 1.2}{\sigma = \frac12 \ \ \implies \ 
%|B(s)|=1 \text{ and } 
%		\frac{B'_{n}(s)}{B_{n}(s)}\ll \log \left(t^2+n^2 \right),
%}
\eq{eq: step 2}{|t| \geq |n| \ \ \implies \ \ 
							1-|B_n(s)|^2
                                                        \ll  \vre.
			}

\medskip

To see this, since $B_n(s) = B_{-n}(s)$ we can assume that $n>0$. Using the recurrence formula
$\mathbf{\Gamma}(z+1)=z\mathbf{\Gamma}(z)$ one obtains
				\eq{eq: formula B}{
%\begin{split}
				B_n(s) %& = \frac{\mathbf{\Gamma}(s)^2}{\mathbf{\Gamma}(s+n)\mathbf{\Gamma}(s-n)}
			=	\frac{\mathbf{\Gamma}(s-n)^2\left((s-1)\dots(s-n) \right)^2}{(s-n)\dots(s+n-1)\mathbf{\Gamma}(s-n)^2}
			%	\\
		%		&=\frac{(s-1)\dots(s-n)}{s(s+1)\dots(s+n-1)}
=\prod_{k=1}^n
                                \frac{s-k}{s+k-1}.
%\end{split}
				}
Since $\sigma \geq \frac{1}{2}$, $|s-k| \leq |s+k-1|$ with
equality when $\sigma = \frac{1}{2}$. This implies \equ{eq: step
  1.1}. 
%and the first inequality in \equ{eq: step 1.2}. 
				%Alternatively,
				%\[
				%\left|\prod_{k=1}^n \frac{s-k}{s+k-1}\right|\leq |t|^{-n}\prod_{k=1}^n \left(1+\frac{\sigma-k}{|t|}\right)
				%\]
				%and recalling that $(1+\frac{x}{n})^n$ is increasing, converging to $e^x$,
				%\[
				%\leq |t|^{-n}\prod_{k=1}^n \left(e^{\sigma+k}\right)^{\frac{1}{|t|}}\leq \frac{e^{(n\sigma+n^2)/|t|}}{|t|^n}
				%\]
				%which is $\leq1$ for $|t|\gg_n1$.
%				For the second inequality in \equ{eq: step 1.2} we have 
%				\[ \begin{split}
%\frac{B_n'(s)}{B_n(s)}  & = \sum_{k=1}^n \frac{2k-1}{(s+k-1)(s-k)} =
%\sum_{k=1}^n \frac{2k-1}{\left(\mathbf{i}t +k - \frac12 \right) \left(
%    \mathbf{i}t -\left( k -\frac12 \right)\right)}
%\\ & %\left|\frac{B_n'(s)}{B_n(s)} \right| \leq 
%=-\sum_{k=1}^n \frac{2k-1}{t^2
%  + \left(k -\frac12 \right)^2} = -\sum_{k=1}^n
%                                \frac{d}{dx}\Big|_{x=k}\log\left(
%                                  t^2 +
%                                  \left(x-\frac12 \right)^2\right).
%\end{split}
%\]
%				Bounding the sum by a corresponding
%                                integral gives the desired
%                                inequality. 

	For \equ{eq: step 2}, 
       set $z_{s,k}=s+k-1 = k-\frac12+\eps + \mathbf{i}t$. Plugging
        into \equ{eq: formula B} gives 
				\[
\begin{split}
					|B_n(s)|^2 &=\prod_{k=1}^n\left|
                                          \frac{-\overline{z_{s,k}}+2\eps}{z_{s,k}} \right|^2
                                        = \prod_{k=1}^n \left(
                                          \frac{|z_{s,k}|^2 -4\vre \re(z_{s,k})+4\vre^2}
                                          {|z_{s,k}|^2}\right) \\
 &                                       =\prod_{k=1}^n \left(1- 4 \vre
                                          \,
                                          \frac{\re(z_{s,k})-\vre}{|z_{s,k}|^2}
                                        \right) = \prod_{k=1}^\mathbf{N}\left(1-2\eps\frac{2k-1}{|z_{s,k}|^2}\right)
.
%,
			\end{split}	\]
%				and hence
%				\[
%				1-|B_n(s)|^2=1-\prod_{k=1}^\mathbf{N}\left(1-2\eps\frac{2k-1}{|z_{s,k}|^2}\right),
 %                               \ \text{ and } \ 2\vre = 2 \sigma -1.
			%	\ll \eps\left(\log{(t^2+n^2)} \right)
                         %       +\eps^2 \left(\text{other
                          %          terms}\right).
%				\]
		%	We bounded the first term by
                 %               replacing a sum with an integal as in
                  %              the proof of \equ{eq: step 1.2}. In
                   %             the expression involving `other
                    %            terms', any power $\vre^\ell$ of
                     %           $\vre$ is multiplied by a partial sum
                      %          of the same expression. So our
                       %         expression is bounded above by 
%$$
%\left(\log(t^2+n^2) (\vre + \vre^2 + \vre^3 + \cdots \right) \ll \vre(\log(t^2+n^2)).
%$$
%To bound the last expression, write 
Write
$$f(k) =
\frac{2k-1}{|z_{s,k}|^2}, \ \text{ so that  } \ F(n) = 
\sum_{k=1}^n
\log (1-2\vre f(k)) = \log \left( |B_n(s)|^2\right).$$
Let $k \leq n$ and $|t| \geq n$, then $f(k) \leq 
 %\min \left( \frac{1}{100}, 
\frac{k}{n^2}
%\right)
$. Taking a second
 order Taylor approximation for $x \mapsto \log(1-x)$ we have 
$$
-\log(1-2\vre f(k)) = 2\vre f(k) + O\left(\vre^2 f(k)^2 \right), 
$$
and hence 
$$-F(n) \ll \sum_{k=1}^n \vre f(k) + \vre^2
  f(k)^2 \ll \vre \sum_{k=1}^n \frac{k}{n^2} \ll \vre. $$
Now by second order Taylor approximation for $x \mapsto 1-e^x$ we get 
$$
1- |B_n(s)|^2 = 1- e^{F(n)} \ll - F(n) \ll \vre, 
$$
and we have shown \equ{eq: step 2}. 

\medskip

{\em Step 2. Regularized Eisenstein Series.}
We
choose a parameter $Y$ depending on $z_0$ by
\eq{eq: choice of Y}{
Y = 1+ \max_j \im(\mathfrak{s}_j^{-1}z_0), 
}
and define a regularized Eisenstein series  
%where for
 %               $\re{s}$ and $y_j=\im(\sigma_j.z)\geq Y$ inside
  %              the cusp $\goc_j$ we define  
		\eq{eq: regularized}{
	E_i^{Y}(z,s)_{2n}= \left\{ \begin{matrix}  E_{i}(z,s)
                  _{2n}-\delta_{ij}y_j^s-\varphi_{ij}(s)_{2n} \, y_j^{1-s}
                  & \ \ \text{ if } y_j=\im(\mathfrak{s}_j^{-1} z)\geq Y \text{
                    for some } j\\ 
E_{i}(z,s) _{2n} & \ \ \text{ otherwise}
\end{matrix} \right. 
		}
\combarak{Rene had $\sigma_j$ and I think $\sigma_j^{-1}$ is correct
  with our conventions, please check.}
(note that 
the condition $y_j \ge Y$ can occur for at most one index $j$). 
Let
$\cE^Y(z,s)_{2n}=\left(E^Y_1(z,s)_{2n},\dots,E^Y_k(z,s)_{2n}\right)^{\on{tr}} $. 
Then  
(see
e.g. \cite[p. 727]{Sarnak 
  horocycles}) for $\sigma > \frac12, \, t \neq 0$ %  $E_j^{Y}(\cdot,s)_{2n} \in L^2(X_\Gamma, \mu_\Gamma)$
%and 
we have the following inner product formula: 
\[ \begin{split} 
	& \int_{X_\Gamma}
        \cE^Y(z,s)_{2n}\overline{\cE^Y(z,s)_{2n}}^{\on{tr}} \, d\mu_{\Gamma}(z)\\
= &\frac{1}{2\vre}
\left(
Y^{2\vre} \on{Id}_{k\times k}
-\Phi_{2n}(s
%\sigma+\mathbf{i}t
)
\overline{\Phi_{2n}(s
%\sigma+\mathbf{i}t
)}^{\on{tr}} Y^{-2\vre}
\right )
+\frac{\overline{\Phi_{2n}(s
%\sigma+\mathbf{i}t
)}^{\on{tr}} Y^{2\mathbf{i}t}-\Phi_{2n}(s
%\sigma+\mathbf{i}t
) Y^{-2\mathbf{i}t}}{2\mathbf{i}t}.
\end{split}\]

\medskip

{\em Step 3. Trace of Maass-Selberg Relations.} 
 Let $\left\|E^Y_j(\cdot 
,s)_{2n} \right\|^2_2 = 
\int_{X_\Gamma} \left|E_j^Y(z,s)_{2n} \right|^2 \, d\mu_{\Gamma}(z)$,
$\|\Phi_{2n}(s)\|^2= \sum_{ij}  
|\varphi_{ij}(s)_{2n}|^2$,
	and	set $\Psi_{2n}(s)=\det{\Phi_{2n}(s)}$. Then for
                $\sigma > \frac12, \, t \neq 0$ we have 
		\begin{equation}
		\label{eq:trace} \begin{split}
		& \sum_{j=1}^{k}
\left\|E^Y_j(\cdot
,s)_{2n} \right\|^2_2 \\
             =   & \frac{1}{2\vre}\left(k
               Y^{2\vre}- \|\Phi_{2n}(s)\|^2 \, Y^{-2\vre} 
\right) 
	%	\end{equation}
%		\[
		+ 
                \frac{1}{2\mathbf{i}t} \left(Y^{2\mathbf{i}t}\sum_{i=1}^k\overline{\varphi_{ii}}(s)_{2n}-Y^{-2\mathbf{i}t}\sum_{i=1}^k\varphi_{ii}(s)_{2n}\right). 
%		\]
%\item [Maass Selberg for $\re{s}=\frac12$]
\end{split}
\end{equation}
	Indeed, this is a matrix computation that involves taking the
        trace of the inner product formula. See
        \cite[p. 140]{Iwaniec} for the computation in case
        $n=0$. \combarak{I did not check this. }

\medskip

{\em Step 4. Bounds on traces and norms.}
For $n=0$ and $|t| \geq 1$  we have 
		\eq{eq: step 4.1}{
		k-\|\Phi(s)\|^2 \ll \vre \,
                  \omega(t),
		}
and for $n \in \N, \, \sigma \in \left[\frac12, \frac32 \right]$ and
$|t| \geq n$ we also have 
		\eq{eq: step 4.2}{
			k-\|\Phi_{2n}(s)\|^2 \ll  \vre %\left(
                          \, \omega(t).
%+ 
 %                         \log{(t^2+n^2)} \right).
		}
Indeed, \equ{eq: step 4.1} is proved in \cite[p. 657]{Selberg
  gottingen}, and since 
$$\|\Phi_{2n}(s)\|^2 = |B_n(s)|^2 \| \Phi(s) \|^2,$$ 
\equ{eq: step 4.2} follows from \equ{eq: step 4.1}, the boundedness of
$\Phi$, and \equ{eq:
  step 2}.

As to $L^2$ bounds, for $n\in \Z$ and $\sigma \in \left[\frac12, \frac32\right]$  we have
%\item[$L^2$-bounds] 
%				\eq{eq: bounds norms}{
%			\left\|E_j^{Y}(\cdot, s) \right\|_2^2\ll \omega(t)
%                        			}
%and for $n \neq 0$ we have
%			\label{lem:sarnak_bounds}
			\eq{eq: bounds norms1}{
			\left\|E_{j}^{Y}(\cdot, s)_{2n}
                        \right\|^2_2\ll \omega(t),
			}
where the implicit constant depends also on $Y$, and hence on $z_0$. 
%For the case $n=0$ of \equ{eq: bounds norms1}, see \cite[p. 736]{Sarnak
%  horocycles}. 
\combarak{There used to be a reference here to Sarnak for the case $n=0$, but
  our proof works in general}. For this we use 
$
			Y^{\pm 2\vre}=1+O_Y(\vre).
$
Using \equ{eq: step 1.1} we have that $\Phi_{2n}(s)$ is uniformly bounded for
$\sigma \in \left[\frac12, \frac32\right]$, and hence the right hand 
summand in \eqref{eq:trace} is bounded. For the left hand summand, we
have by \equ{eq: step 4.2}
	\[ %\begin{split}
			\frac{1}{2\vre}\left(kY^{2\vre}-Y^{-2\vre}\|\Phi_{2n}(s)\|^2\right)
		%	&
                        =\frac{1}{2\vre}\left(k -\|\Phi_{2n}(s)\|^2
                          +O_Y(\vre)\right) %\\ 
			% & 
%\frac{1}{2\vre}\left(k-\|\Phi_{2n}(s)\|^2\right)+O(\log Y)
			=O_Y(\omega{(t)} + 1).
		%	\end{split} 
\]
Combining bounds and recalling $\omega(t)>1$ gives \equ{eq: bounds
  norms1} for $\sigma > \frac12$. Since the implicit constant in
\equ{eq: bounds norms1} is independent of
$\sigma$ we can take a limit and get the same bound for $\sigma = \frac12$. 

\medskip

{\em Step 5. Convolution and point-pair invariant.}
For $\gamma = \left( \begin{matrix} a & b \\ c & d \end{matrix}
\right) \in \Gamma$, $n \in \Z_{\geq 0}$ and $z \in \Hyp$, let 
$$
\eps_\gamma(z)^{2n} = \frac{(cz+d)^{2n}}{|cz+d|^{2n}}.
$$
Say a function $f$ on $\Hyp$ is {\em of weight $2n$} if it
transforms like
$f(\gamma z)=\eps_\gamma(z)^{2n}f(z)$, and denote the automorphic
functions, whose restriction to a Dirichlet fundamental domain for
$\Gamma$ on $\Hyp$ is in $L^2 (X_\Gamma, \mu_\Gamma)$, by $L^2(\Gamma,
2n)$. 	For $\delta \in (0,1)$, let $\chi_\delta$ be the indicator
function of $[0, \delta]$ \combarak{I took this from \cite{MS} but
  maybe we need $\chi_\delta$ smooth?}, and for $z, w \in \Hyp$ define
			\[ \begin{split}
			u(z,w)= & \frac{|z-w|^2}{4\, \im{z}\, \im{w}}
			\\
			H(z,w)=& (-1)^n\frac{(w-\overline{z})^{2n}}{|w-\overline{z}|^{2n}}
			\\
			k(z,w)= k_\delta(z,w) = & H(z,w)\chi_\delta(u(z,w))
			\\
			K(z,w)= K_\delta(z,w) = & \sum_{\gamma\in\Gamma}k_\delta(z,\gamma w)\eps_\gamma(z)^{2n}.
			\end{split} \]
%			Here $\chi_\delta$ is smooth real valued and supported
%			on a $\delta$-ball (with respect to the
%			function $u$). 
Functions such as $k$ are called {\em point pair invariants of weight
  $2n$.} They satisfy (see \cite[Vol. 1, Prop. 2.11]{Hejhal}) the following
transformation rules:
			\[
			H(\gamma z,\gamma w)=\eps_\gamma(z)^{2n}H(z,w)\eps_\gamma(w)^{-2n}
			\]
			\[
			k(\gamma z,\gamma w)=\eps_\gamma(z)^{2n}k(z,w)\eps_\gamma(w)^{-2n}.
			\]
	The  operator $L_k$ defined by 
			\eq{eq: operator}{
			L_kf(z)=\int_{\bH}k(z,w)f(w)dw=\int_{\bH/\Gamma}K(z,w)f(w)dw,
			}
is a bounded self-adjoint operator on  
                $L^2(\Gamma, 2n)$, 
			see \cite[Vol. 1, Prop 2.13]{Hejhal}. Let
                        $\Delta$ be as in
                        \equ{eq: Laplacian}, and let 
                        $$\Delta_n u (z)  = \Delta u (z) + \mathbf{i}n  y
                        \frac{\partial u}{\partial x}(z) \ \
                        (\text{where } z = x +
                        \mathbf{i}y) $$
be the {\em weighted Laplacian.} \combarak{Note our Laplacian is the
  negative of the one in Hejhal.} Then the Eisenstein series $z \mapsto
E_i(z,s)_{2n}$ is a $\Delta_{2n}$-eigenfunction and therefore (see
\cite[Vol. 1, Prop. 2.14]{Hejhal}) is an eigenfunction for $L_k$, that
is there is $h_{i,n, \delta}(s)$ such that for all $z \in \Hyp$, $L_k E_i(z, s)_{2n} =
h_{i,n, \delta}(s) E_i(z, s)_{2n} $. 
%By \cite[Vol.1, p. 392]{Hejhal}, the corresponding
%eigenvalue is 
%	\[
%			h(s)=\int_{\bH}k(\mathbf{i},z) y(z)^{s} dz.
%		\]

\medskip

{\em Step 6. Bounding the eigenvalue.} 
The eigenvalue $h_{i,n, \delta}(s)$ satisfies a bound 
\eq{eq: MS}{\sigma \in \left[ \frac12, \frac32 \right], \ |t| \geq |n|+1, \
\delta = \frac{1}{100|t|^{2}} \ \implies 
		|h_{i,n, \delta}(s)|\gg \frac{1}{|t|^2}.	
}
%Indeed, in the case $n=0$, the eigenvalue is computed in \cite[Vol.1,
%p. 392]{Hejhal}
%as	
%\[
%			h_0(s)=\int_{\bH}k(\mathbf{i},z) y(z)^{s} dz.
%		\]
Indeed, the bound \equ{eq: MS} is proved in \cite[Lemma 2.1]{MS} for
$\sigma = \frac12$, and the proof goes through for general $\sigma \in
\left[ \frac12, \frac32 \right]$. \combarak{Are more
  details needed? Again, I did not check this. }

\medskip

{\em Step 7. Pointwise bounds.}
We now note that our choice \equ{eq: choice of Y} implies that 
\eq{eq: can substitute}{
L_k E_i^Y(z_0, s)_{2n} = L_k E_i(z_0 , s)_{2n}.
}
Indeed, considering \equ{eq: regularized} and the definitions of
$\delta$ and $u$ we see that $E_i^Y(w, s)_{2n}$ and $E_i(w, s)_{2n}$
coincide for all $w$ in the neighborhood 
of $z_0$ consisting of the points for which the integrand in \equ{eq:
  operator} is nonzero.  

To conclude the proof of \equ{eq: want appendix}, we apply 
			Cauchy-Schwartz to find
			\[ \begin{split}
		\left|	E_{i}(z_0,s)_{2n} \right|  = &
                \frac{1}{|h_{i,n, \delta}(s)|} \left| L_k
                        E_i(z_0, s)_{2n} \right| \stackrel{\equ{eq: can
                            substitute}}{=} \frac{1}{|h_{i,n,
                            \delta}(s)|} \left| L_k
                        E^Y_i(z_0, s)_{2n}  \right | 
\\
\stackrel{\equ{eq: MS}}{\ll} & |t|^2 \, \left |
  \int_{\bH/\Gamma}K_\delta(z_0,w)E^Y_{i}(w,s)_{2n} \, dw \right| 
			 \\ \stackrel{\equ{eq: bounds norms1}}{\ll} &
			|t|^2 \, 
			\sqrt{\omega(t)} \,
 \sqrt{\int_{\bH/\Gamma} \left|K_\delta(z_0,w)\right|^2 \, dw} \ll 	|t|^2 \, 
			\sqrt{\omega(t)} \, \sqrt{\delta} \ll |t| \, \sqrt{\omega(t)}. 
					\end{split} \]
\combarak{This takes care of $n \gg |t|$, explain what to do for small
  $n$. }
\end{proof}

\end{appendix}
\bibliographystyle{alpha}

\end{document}